\title{Higher structures in rational homotopy theory}
\date{\today}
\author{Alexander Berglund}
\address{Department of Mathematics, Stockholm University, 106 91 Stockholm, Sweden}
\email{\href{mailto:alexb@math.su.se}{alexb@math.su.se}}
\author{Robin Stoll}
\address{Department of Pure Mathematics and Mathematical Statistics, University of Cambridge, CB3 0WB Cambridge, United Kingdom}
\email{\href{mailto:rs2348@cam.ac.uk}{rs2348@cam.ac.uk}}
\thanks{
  2020 \emph{Mathematics Subject Classification.} 55-02, 55P62, 18M70, 18G85, 16S37, 55S30, 55Q15, 55R40, 55P60.
}
\keywords{Rational homotopy theory, $\Linfty$-algebras, $\Cinfty$-algebras, Koszul algebras, operads, graph complexes.}
\begin{document}
\begin{abstract}
These notes are based on a series of three lectures given (online) by the first named author at the workshop \emph{Higher Structures and Operadic Calculus} at CRM Barcelona in June 2021.
The aim is to give a concise introduction to rational homotopy theory through the lens of higher structures.

The rational homotopy type of a simply connected space of finite type is modeled by a $\Cinfty$-algebra structure on the rational cohomology groups, or alternatively an $\Linfty$-algebra structure on the rational homotopy groups. The first lecture is devoted to explaining these models and their relation to the classical models of Quillen and Sullivan.

The second lecture discusses the relation between Koszul algebras, formality and coformality. The main result is that a space is formal if and only if the rational homotopy $\Linfty$-algebra is Koszul and, dually, a space is coformal if and only if the cohomology $\Cinfty$-algebra is Koszul. For spaces that are both formal and coformal, this collapses to classical Koszul duality between Lie and commutative algebras.

In the third lecture, we discuss certain higher structure in the rational homotopy theory of automorphisms of high dimensional manifolds, discovered by Berglund--Madsen. The higher structure in question is Kontsevich's Lie graph complex and variants of it.


  

\end{abstract}

\maketitle

\tableofcontents

\section*{Introduction}

The purpose of these lecture notes is to give an introduction to rational homotopy theory from the perspective of higher structures. For readers already acquainted with rational homotopy theory as presented in standard textbooks, these lecture notes may serve as an introduction to the language of algebraic operads, $\Cinfty$- and $\Linfty$-algebras, Koszul duality, etc., by seeing how it is used in a familiar context. Conversely, for readers with a background in the theory of algebraic operads, these lecture notes may serve as a concise introduction to rational homotopy theory. For readers with background in none of the theories, we hope the text can function as an introduction to both. Needless to say, it is only possible to cover a small amount of material in three lectures. We do not give full proofs of all statements but we have tried to provide references to more complete accounts throughout.
\vskip8pt
The goal of the first lecture is to explain, and outline the proof of, the result that the rational homotopy type of a simply connected space of finite type is faithfully encoded by a $\Cinfty$-algebra structure on the rational cohomology groups, or alternatively by an $\Linfty$-algebra structure on the rational homotopy groups.
We begin by reviewing the definition of $\Cinfty$- and $\Linfty$-algebras, and then explain how the theory of algebraic operads provides a unified framework for developing the basic theory of these higher structures. We then present a short proof of the homotopy transfer theorem, which is one of the main technical tools. Next, we review the parts of Sullivan's and Quillen's theories that are necessary for proving the main result and we discuss the relation between the $\Cinfty$- and $\Linfty$-algebra models and the minimal models of classical rational homotopy theory.
\vskip8pt
The second lecture discusses the relation between Koszul algebras and the notions of formality and coformality. We review the definition of Koszul commutative and Lie algebras, and the recent generalization to Koszul $\Cinfty$- and $\Linfty$-infinity algebras. We emphasize that we view Koszul algebras as a calculational tool that, in favorable situations, allows one to, for example, quickly compute the rational homotopy groups of a space from a presentation of the rational cohomology ring, or quickly decide whether a given space is (co)formal or not. We give a number of examples to illustrate this point.
\vskip8pt
In the third lecture, we discuss certain higher structure in the rational cohomology of classifying spaces of automorphisms of high dimensional manifolds, discovered by Berglund--Madsen. The higher structure in question is Kontsevich's Lie graph complex and variants of it. We begin by reviewing some general results on the rational homotopy theory of spaces of self--homotopy equivalences of simply connected finite CW-complexes. Then we review the definition of modular operads and graph complexes. Finally, we sketch the results of Berglund--Madsen.
\vskip8pt
In an appendix, we discuss how the $\Cinfty$- and $\Linfty$-models relate to Massey operations and higher order Whitehead products, respectively. We also review the nerve of an $\Linfty$-algebra and related constructions, and we summarize some facts about rational homotopy theory of non-nilpotent spaces.


\subsection*{Conventions}
We work over the field of rational numbers $\QQ$, except when stated otherwise. In particular ``vector space'' will mean vector space over $\QQ$, tensor products are over $\QQ$, and so on. 

We use homological grading convention, so differentials in chain complexes have degree $-1$, and chain complexes are unbounded unless stated otherwise. Thus, a chain complex is a collection $V = \{V_i\}_{i\in \ZZ}$ of vector spaces together with a linear map $d\colon V \to V$ of degree $-1$ such that $d^2 = 0$. We let $|v|$ denote the degree of an element $v\in V$, i.e.\ $|v|=i$ means $v\in V_i$.
By using the convention $V^i = V_{-i}$, chain complexes can be considered as cochain complexes and vice versa, that is, subscripts refer to the homological degree and superscripts to the negated homological (i.e.\ cohomological) degree.

We use $d$ as generic notation for the differential of a chain complex when it is clear from the context what is meant. For example, the differential of the tensor product $V\otimes W$ of two chain complexes can be written as
$$d(v\otimes w) = d(v) \otimes w  +(-1)^{|v|}v\otimes d(w)$$
without risk of confusion. We use $\partial$ as generic notation for differentials in Hom-complexes $\Hom(V,W)$, so that
$$\partial(f) = d\circ f - (-1)^{|f|} f\circ d$$
for $f\in \Hom(V,W)$.

The dual of a chain complex $V$ is denoted $\dual{V} = \Hom(V,\QQ)$. The suspension $\shift V$ is defined by $(\shift V)_i = V_{i-1}$ with differential $d(\shift v) = -\shift d(v)$ for $v\in V$.


\section{Lecture 1: \texorpdfstring{$\Linfty$}{L∞}- and \texorpdfstring{$\Cinfty$}{C∞}-models for spaces}

The primary task of rational homotopy theory is to classify topological spaces up to rational equivalence.

\begin{definition}
A map $f \colon X \to Y$ between topological spaces is called a \emph{rational equivalence} if the induced map in rational homology,
$$f_*\colon \Ho{*}(X;\QQ) \longto \Ho{*}(Y;\QQ),$$
is an isomorphism.
  
Rational equivalences need not have inverses, but rational equivalence generates an equivalence relation $\rateq$ and we say that two spaces $X$ and $Y$ are \emph{rationally equivalent}, or have the same \emph{rational homotopy type}, if $X \rateq Y$.
\end{definition}

We will mainly focus on the simply connected case. If $X$ and $Y$ are simply connected, then $f \colon X \to Y$ is a rational equivalence if and only if the induced map on rational homotopy groups,
$$f_*\colon \htpygrp{*}(X)\otimes \QQ \longto \htpygrp{*}(Y)\otimes \QQ,$$
is an isomorphism (see e.g.\ \cite[Theorem 8.6]{FelixHalperinThomas01}).
Evidently, the rational cohomology and homotopy groups are invariants of the rational homotopy type of a simply connected space. Somewhat surprisingly, a complete solution to the classification problem can be obtained by adding certain higher structure to these invariants. The aim of the first lecture is to explain the statements and proofs of the following two theorems.

\begin{theorem} \label{thm:C-infinity model}
  For each simply connected space $X$ of finite $\QQ$-type, there exists a $\Cinfty$-algebra structure on the rational cohomology groups $\Coho{*}(X;\QQ)$ that provides a complete invariant of the rational homotopy type, in the sense that $X$ and $Y$ are rationally equivalent if and only if the $\Cinfty$-structures on $\Coho{*}(X;\QQ)$ and $\Coho{*}(Y;\QQ)$ are $\Cinfty$-isomorphic.
\end{theorem}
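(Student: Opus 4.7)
The plan is to combine Sullivan's theory with the Homotopy Transfer Theorem. Sullivan's theorem tells us that for simply connected spaces of finite $\QQ$-type, the rational homotopy type is faithfully encoded in the commutative differential graded algebra $A_{PL}(X)$ of polynomial differential forms: $X \rateq Y$ if and only if $A_{PL}(X)$ and $A_{PL}(Y)$ are connected by a zig-zag of quasi-isomorphisms of commutative dg algebras. Since any commutative dg algebra is in particular a $\Cinfty$-algebra (with only the binary operation nonzero), the task reduces to transferring this structure down to cohomology in a way that preserves the full homotopy-theoretic information.

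For the transfer, I would apply the Homotopy Transfer Theorem to any chosen contraction of the chain complex $A_{PL}(X)$ onto $\Coho{*}(X;\QQ)$. Such contractions exist because we work over a field: one selects cycle representatives for a basis of the cohomology together with a contracting homotopy. The HTT then produces a $\Cinfty$-algebra structure on $\Coho{*}(X;\QQ)$ whose binary operation is the cup product, together with an $\infty$-quasi-isomorphism back to $A_{PL}(X)$. A standard argument shows that different choices of contraction yield $\Cinfty$-isomorphic structures, so the cohomology $\Cinfty$-algebra is well defined up to $\Cinfty$-isomorphism.

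To prove the biconditional, the ``only if'' direction proceeds by transferring a given zig-zag of commutative dg algebra quasi-isomorphisms between $A_{PL}(X)$ and $A_{PL}(Y)$ to an $\infty$-quasi-isomorphism between the cohomology $\Cinfty$-algebras, using the HTT $\infty$-quasi-isomorphisms at each stage. Crucially, $\infty$-quasi-isomorphisms of $\Cinfty$-algebras are invertible up to $\Cinfty$-homotopy --- a feature not shared by strict quasi-isomorphisms of commutative dg algebras --- so the composite assembles into an honest $\Cinfty$-isomorphism. Conversely, a $\Cinfty$-isomorphism $\Coho{*}(X;\QQ) \to \Coho{*}(Y;\QQ)$ can be viewed, via the bar/cobar adjunction, as a quasi-isomorphism between cofibrant commutative dg algebra resolutions of the two cohomology rings. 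Composing with the $\infty$-quasi-isomorphisms produced by the HTT then gives a zig-zag witnessing $A_{PL}(X) \simeq A_{PL}(Y)$, and Sullivan's theorem yields $X \rateq Y$.

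The main obstacle will be the operadic foundations underlying this picture: establishing that $\infty$-quasi-isomorphisms of $\Cinfty$-algebras are invertible up to homotopy and that the bar/cobar adjunction correctly translates $\Cinfty$-morphisms into commutative dg algebra morphisms between cofibrant resolutions. This rests on the Koszul duality between the operads governing commutative and Lie algebras, which identifies the homotopy category of $\Cinfty$-algebras (with $\infty$-morphisms) with that of commutative dg algebras (with quasi-isomorphisms inverted). Once this framework is in place, Sullivan's theorem and the HTT fit together cleanly to yield the result.
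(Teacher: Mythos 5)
Your proposal is correct and follows essentially the same route as the paper: reduce via Sullivan's theory to the statement that $X \rateq Y$ if and only if $\PDR(X)$ and $\PDR(Y)$ are quasi-isomorphic cdgas, transfer a minimal $\Cinfty$-structure onto cohomology by the homotopy transfer theorem, and use the bar--cobar adjunction to rectify an $\infty$-quasi-isomorphism back to a zig-zag of cdga quasi-isomorphisms for the converse. The one place you reach for a heavier tool than necessary is the ``only if'' direction: rather than invoking invertibility of $\infty$-quasi-isomorphisms up to $\Cinfty$-homotopy, the paper simply observes that the composite $\infty$-quasi-isomorphism $\Coho{*}(X;\QQ) \longinftyto \Coho{*}(Y;\QQ)$ has minimal (zero-differential) source and target, so its linear part is a quasi-isomorphism of complexes with trivial differential and hence already an isomorphism.
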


\begin{theorem}
\label{thm:L-infinity model}
For each simply connected space $X$ of finite $\QQ$-type, there exists an $\Linfty$-algebra structure on the shifted rational homotopy groups $\htpygrp{* + 1}(X) \tensor \QQ$ that provides a complete invariant of the rational homotopy type, in the sense that $X$ and $Y$ are rationally equivalent if and only if the $\Linfty$-structures on  $\htpygrp{* + 1}(X) \tensor \QQ$ and $\htpygrp{* + 1}(Y) \tensor \QQ$ are $\Linfty$-isomorphic.
\end{theorem}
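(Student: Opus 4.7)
The plan is to reduce the theorem to Quillen's classical result by transporting his dg Lie algebra model of $X$ onto its homology via the homotopy transfer theorem presented earlier in the lecture. Recall that Quillen constructs a functor $\lambda$ from simply connected spaces of finite $\QQ$-type to reduced dg Lie algebras over $\QQ$ such that $H_*(\lambda(X)) \cong \htpygrp{* + 1}(X) \tensor \QQ$ as graded vector spaces (the shift reflecting the degree of the Whitehead product), and such that $X \rateq Y$ if and only if $\lambda(X)$ and $\lambda(Y)$ are connected by a zigzag of quasi-isomorphisms of dg Lie algebras.

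For the existence of the $\Linfty$-structure, I would invoke the homotopy transfer theorem applied to the Koszul operad encoding Lie algebras. Choosing a contraction of $\lambda(X)$ onto $H_*(\lambda(X))$ produces an $\Linfty$-structure on $\htpygrp{* + 1}(X) \tensor \QQ$ together with an $\Linfty$-quasi-isomorphism from it to $\lambda(X)$. A standard argument shows that any two such transferred structures are $\Linfty$-isomorphic, so $X$ is assigned a well-defined $\Linfty$-isomorphism class.

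The classification statement then exploits the fact that the transferred $\Linfty$-structure is minimal, i.e.\ carries the zero differential. If $X \rateq Y$, then $\lambda(X)$ and $\lambda(Y)$ are connected by a zigzag of dg Lie quasi-isomorphisms; combined with the transfer quasi-isomorphisms this yields a zigzag of $\Linfty$-quasi-isomorphisms between the $\Linfty$-structures on their homologies. Since an $\Linfty$-quasi-isomorphism between minimal $\Linfty$-algebras is automatically an isomorphism (its linear part is a chain isomorphism between complexes with zero differential, hence invertible, and one then inverts the higher components inductively), the zigzag collapses to a genuine $\Linfty$-isomorphism. Conversely, an $\Linfty$-isomorphism of the transferred structures composes with the transfer quasi-isomorphisms to give a chain of $\Linfty$-quasi-isomorphisms $\lambda(X) \simeq \lambda(Y)$; rectifying this to a zigzag of honest dg Lie quasi-isomorphisms and invoking Quillen's theorem yields $X \rateq Y$.

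The main obstacle I anticipate is establishing that the transferred structure is well-defined up to $\Linfty$-isomorphism independently of both the choice of contraction and the choice of dg Lie model representing the rational homotopy type. Once this homotopy invariance of the HTT is in place, the remainder of the argument is largely formal and runs in complete analogy with the proof of Theorem~\ref{thm:C-infinity model}, with Quillen's dg Lie model and the operad for Lie algebras playing the roles of Sullivan's commutative algebra model and the operad for commutative algebras.
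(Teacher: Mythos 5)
Your proposal follows the same route as the paper's (first) proof: apply the homotopy transfer theorem to Quillen's dg Lie algebra $\lambda X$ to obtain the $\Linfty$-structure on $\htpygrp{*+1}(X)\otimes\QQ$, and then combine Quillen's equivalence with the minimality theorem (\cref{thm:minimal_Pinfty}) to translate quasi-isomorphism of the models into $\Linfty$-isomorphism of the transferred structures. The steps you spell out by hand --- collapsing a zigzag of $\infty$-quasi-isomorphisms between minimal $\Linfty$-algebras into an $\Linfty$-isomorphism, and rectifying $\infty$-quasi-isomorphisms back to zigzags of strict ones --- are exactly the content of \cref{thm:minimal_Pinfty} and \cref{lemma:rectification}, which the paper invokes directly.
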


Here, \emph{finite $\QQ$-type} means that $\Ho{k}(X;\QQ)$ is finite dimensional for each $k$.

On one hand, both theorems are mere reformulations of results that have been known since the early days of rational homotopy theory; we will discuss the relation to Quillen's and Sullivan's theories in \cref{sec:nothing new}.

On the other hand, the perspective of higher structures has certain advantages and it has played a considerable role in recent developments of rational homotopy theory and its applications. The language of $\Cinfty$- and $\Linfty$-algebras does not only shed new light on existing results, it also leads to new results that would be cumbersome to state and prove using only the classical theory.

Theorem \ref{thm:C-infinity model} has been stated and proved in the language of $\Cinfty$-algebras by Kadeishvili \cite{Kadeishvili09}.
Statements along the lines of \cref{thm:L-infinity model} can be found in various sources, for example \cite[\S 1]{LadaMarkl95}, but a treatment parallel to \cite{Kadeishvili09} does not seem to have appeared in the literature.




\subsection{\texorpdfstring{$\Ainfty$}{A∞}-algebras}
A $\Cinfty$-algebra is a special kind of $\Ainfty$-algebra so we begin with the latter. 
$\Ainfty$-algebras were introduced by Stasheff \cite[\S2]{Stasheff63}. We will here only review a small part of the theory. For a less condensed introduction, we recommend \cite{Keller01}. A detailed comprehensive treatment can be found in \cite{LefevreHasegawa03}.

\begin{definition}
An \emph{$\Ainfty$-algebra structure} on a chain complex $(A, d)$ consists of a family of operations
  \[ m_n \colon A^{\tensor n}  \longto  A,\quad n = 2, 3, \ldots,\]
  of degree $n - 2$ such that the equation
  \[ \sum_{r + s + t = n} (-1)^{r + st} m_{r + 1 + t} \after ({\id^{\tensor r}} \tensor m_s \tensor {\id^{\tensor t}}) = 0 \]
  holds for every $n \ge 2$, where the sum is over all positive integers $r,s,t$ such that $r+s+t =n$, and where we set $m_1 = d$.
\end{definition}
Note that for $n = 2$ the above equation says
\[ d \after m_2 - m_2 \after (d \tensor {\id} + {\id} \tensor d) = 0 .\]
In other words, $m_2$ is a chain map $A^{\tensor 2} \to A$. If we let $\del$ denote the differential of the Hom-complex $\Hom(A^{\tensor k}, A)$, this equation can be rewritten as
$$\del(m_2) = 0.$$ 
Similarly, for $n=3$, the equation can be written as
\begin{equation} \label{eq:m_3}
\del(m_3) = m_2 \after ({\id} \tensor m_2) - m_2 \after (m_2 \tensor {\id}).
\end{equation}
This means that $m_2$ is associative up to homotopy, and that $m_3$ is a prescribed chain homotopy between the ``associator'' and the zero map.
One can also think of \eqref{eq:m_3} as the first step in a resolution of the associativity relation. From this perspective, the higher operations $m_4,m_5,\ldots$ take care of the syzygies in this resolution.

There is an alternative, more compact, definition of $\Ainfty$-algebras. Part of this is contained in \cite[(2.4)]{Stasheff63}; the form we state can be found in \cite[Lemma 1.2.2.1]{LefevreHasegawa03}.
We write $\coTA(V)$ for the \emph{tensor coalgebra} on a chain complex $V$, defined by
$$\coTA(V) = \Dirsum_{n \ge 0} V^{\tensor n}.$$
Using the ``bar notation'' $[v_1|\ldots|v_n] = v_1\otimes \ldots \otimes v_n$ for $v_i\in V$, the coproduct $\Delta\colon \coTA(V) \to \coTA(V)\otimes \coTA(V)$ is given by
$$\Delta[v_1|\ldots|v_n] = \sum_{i=0}^n [v_1|\ldots|v_i]\otimes [v_{i+1}|\ldots | v_n],$$
and the counit $\eta\colon \QQ \to \coTA(V)$ by the inclusion of $V^{\tensor 0} = \QQ$. The direct sum decomposition yields an additional grading on $\coTA(V)$, which we call the \emph{weight grading} and whose $n$-th graded piece we denote by $\weight n {\coTA (V)}$. We let $\weight {<n} {\coTA (V)}$ denote the sum of the components of weight $<n$. We let $\redcoTA(V)$ denote the sum of components in positive weight.

\begin{proposition} \label{lemma:Ainfty_equiv_def}
  Let $A$ be a chain complex.
  There is a bijection between $\Ainfty$-algebra structures on $A$ and degree $-1$ self-maps $b$ of $\coTA(\shift A)$ such that the following conditions are fulfilled:
  \begin{itemize}
    \item $b$ is a coderivation, i.e.\ $\Delta b = (b\otimes \id + \id \otimes b)\Delta$.
    \item $b$ is a perturbation of the differential $d$ on $\coTA(\shift A)$, i.e.\ $(d + b)^2 = 0$.
    \item $b$ decreases weight, i.e.\ $b \big( \weight n {\coTA (\shift A)} \big) \subseteq \weight {<n} {\coTA (\shift A)}$ for all $n$.
  \end{itemize}
\end{proposition}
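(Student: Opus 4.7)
The plan is to exploit the cofreeness of the tensor coalgebra $\coTA(\shift A)$: every coderivation on it is uniquely determined by its corestriction to the cogenerators. Writing $\pi_1 \colon \coTA(\shift A) \to \shift A$ for the projection to the weight-$1$ summand, a degree $-1$ self-map $b$ on $\coTA(\shift A)$ is a coderivation if and only if it is reconstructed from its components $b_n := \pi_1 \after b\big|_{(\shift A)^{\tensor n}}$ via the formula
\[ b\big|_{(\shift A)^{\tensor n}} = \sum_{r + k + t = n} \id^{\tensor r} \tensor b_k \tensor \id^{\tensor t}. \]
Since the summand indexed by $(r,k,t)$ maps into $(\shift A)^{\tensor (n - k + 1)}$, the weight-decrease condition translates immediately to $b_0 = 0$ and $b_1 = 0$. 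Hence such coderivations $b$ are equivalent data to sequences $(b_n)_{n \ge 2}$ of degree $-1$ maps $(\shift A)^{\tensor n} \to \shift A$.

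Next I would transport each $b_n$ across the suspension to obtain $m_n \colon A^{\tensor n} \to A$ by $m_n := \shift^{-1} \after b_n \after \shift^{\tensor n}$ (up to a conventional sign). A quick degree count gives $\lvert m_n \rvert = n - 2$, as required by the definition of an $\Ainfty$-algebra. The internal differential $d$ on $\coTA(\shift A)$ is itself a coderivation, analogously encoded by its single nontrivial component $d_{\shift A} \colon \shift A \to \shift A$; this piece corresponds to $m_1 = d_A$ on the $\Ainfty$-side. Together this matches up the two kinds of data.

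It then remains to identify the equation $(d + b)^2 = 0$ with the Stasheff identity. Since $d + b$ has odd degree, $(d + b)^2 = \tfrac{1}{2}[d + b, d + b]$ is again a coderivation, and so vanishes if and only if its corestriction $\pi_1 \after (d + b)^2$ to $\shift A$ vanishes. Expanding this projection on $(\shift A)^{\tensor n}$ via the reconstruction formula gives precisely
\[ \sum_{r + s + t = n} b_{r + 1 + t} \after \bigl( \id^{\tensor r} \tensor b_s \tensor \id^{\tensor t} \bigr) = 0, \]
with the convention $b_1 := d_{\shift A}$. Rewriting this in terms of the $m_k$ requires commuting suspensions past tensor factors by the Koszul sign rule; the degree of $b_s$ together with the count of suspensions moved past $t$ identity factors should conspire to produce exactly the sign $(-1)^{r + st}$ featured in the Stasheff identity. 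This last sign verification is the main technical obstacle, but once a consistent suspension convention is fixed it is an entirely routine piece of Koszul bookkeeping.
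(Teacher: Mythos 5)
Your proposal is correct and follows essentially the same route as the paper's own (much terser) proof sketch: use cofreeness of the tensor coalgebra to identify coderivations with their corestrictions $b_n$ to the cogenerators, match these with the $m_n$ up to shifts and signs, and observe that the odd coderivation $(d+b)^2$ vanishes iff its weight-one projection does, which unpacks to the Stasheff identities. You supply somewhat more detail than the paper (in particular the explicit reconstruction formula and the observation that weight-decrease forces $b_0 = b_1 = 0$), and like the paper you defer the sign bookkeeping, which is acceptable here.
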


\begin{proof}[Proof sketch]
  One notes that a coderivation $b \colon \coTA(\shift A) \to \coTA(\shift A)$ is uniquely determined by its linear part, i.e.\ the composition
  \[ \Dirsum_n (\shift A)^{\tensor n} = \coTA(\shift A) \xlongto{b} \coTA(\shift A) \xlongto{\pr} \shift A . \]
  The components of this map correspond (up to shifts and signs) to the operations $m_n$.
  Under this correspondence, the $\Ainfty$-relations are equivalent to $(d + b)^2 = 0$.
\end{proof}

A differential graded associative algebra is the same thing as an $\Ainfty$-algebra $A$ such that $m_n = 0$ for all $n\geq 3$. When $A$ is a dg associative algebra, the dg coalgebra $\big(\coTA(\shift A),d+b\big)$ agrees with the classical bar construction $\Bar A$ (as defined in e.g.\ \cite[Chapter 19]{FelixHalperinThomas01}). This justifies the following definition.

\begin{definition}
  Let $A$ be an $\Ainfty$-algebra.
  The \emph{bar construction} of $A$ is the dg coalgebra
  $$\Bar A \defeq \bigl( \coTA(\shift A), d + b \bigr),$$
  where $d + b$ is as in \Cref{lemma:Ainfty_equiv_def}.
\end{definition}

\subsection{\texorpdfstring{$\Cinfty$}{C∞}-algebras}
$\Cinfty$-algebras go back at least to Kadeishvili \cite{Kadeishvili88}, who calls them ``commutative $A(\infty)$-algebras''. They are called ``$\Cinfty$-algebras'' by Getzler--Jones \cite[\S 5.3]{GetzlerJones94}.
To define them we need the following auxiliary notion.

\begin{definition}
  A \emph{$(p,q)$-shuffle} is a permutation $\sigma \in \Symm{p + q}$ such that
  \[ \sigma(1) < \dots < \sigma(p) \qquad \text{and} \qquad \sigma(p+1) < \dots < \sigma(p + q) \]
  hold. We denote the set of $(p,q)$-shuffles by $\Shuffles p q$ and we let $\tau_{p,q}$ denote the distinguished element in the group algebra $\QQ \Symm{p+q}$ given by
  $$\tau_{p,q} = \sum_{\sigma \in \Shuffles p q} \sgn(\sigma) \sigma.$$
\end{definition}

The name ``shuffle'' is motivated by riffle shuffles of a deck of cards: it is cut into two parts which are subsequently mixed together without changing the order of the cards in either of the two parts.
Note however that there is some inconsistency regarding the use of the term; what we call a shuffle is called an ``unshuffle'' in \cite{LadaMarkl95}, but our usage agrees with that of e.g.\ \cite{GetzlerJones94} and \cite{LodayVallette12}.

\begin{definition}
  A \emph{$\Cinfty$-algebra} is an $\Ainfty$-algebra $A$ whose operations $m_n$ fulfill
     \begin{equation} \label{eq:cinfty axiom}
     m_{p + q} \after \tau_{p,q} = 0
     \end{equation}
  for all $p, q \geq 1$.
  Here $\tau_{p,q}$ acts on $A^{\tensor p + q}$ from the left by permuting the tensor factors.
\end{definition}

Note that for $p = q = 1$ the above relation reads as
\[ m_2 = m_2 \after \tau \]
where $\tau$ is the non-trivial element of $\Symm 2$.
In particular $m_2$ equips $A$ with the structure of a graded commutative (but \emph{not} necessarily associative) binary operation and it descends to a graded commutative algebra structure on the homology of $A$. The higher operations $m_n$ may be thought of as ``resolving the associativity relation'' of the commutative operation $m_2$, and \eqref{eq:cinfty axiom} may be thought of as the appropriate commutativity constraint on these.

\Cref{lemma:Ainfty_equiv_def} associates to an $\Ainfty$-algebra $A$ a differential $d + b$ on $\Bar A$.
If $A$ is a $\Cinfty$-algebra, this differential is a derivation with respect to the shuffle product $\shuffle$ of $\Bar A = \coTA (\shift A)$ (cf.\ \cite[Proposition 5.5]{GetzlerJones94}).
In particular $d + b$ descends to a differential on the indecomposables with respect to the shuffle product,
\[ \redcoTA (\shift A) / (\redcoTA (\shift A) \shuffle \redcoTA (\shift A)).\]
The latter is isomorphic to the \emph{cofree Lie coalgebra} $\cofreeLie (\shift A)$ (see e.g.\ \cite[Theorem 1.3.6]{LodayVallette12}).
This construction can also be reversed, yielding the following analog of \Cref{lemma:Ainfty_equiv_def} (cf.\ \cite[Proposition 13.1.6]{LodayVallette12}).

\begin{proposition} \label{lemma:Cinfty_equiv_def}
  Let $A$ be a chain complex. There is a bijection between $\Cinfty$-algebra structures on $A$ and degree $-1$ self-maps $b$ of $\cofreeLie(\shift A)$ such that the following conditions are fulfilled:
  \begin{itemize}
    \item $b$ is a coderivation.
    \item $b$ is a perturbation of the differential $d$ on $\cofreeLie(\shift A)$, i.e.\ $(d + b)^2 = 0$.
    \item $b$ decreases weight, i.e.\ $b \big( \weight w {\cofreeLie (\shift A)} \big) \subseteq \weight {<w} {\cofreeLie (\shift A)}$.
  \end{itemize}
\end{proposition}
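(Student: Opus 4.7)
The strategy is to reduce to \Cref{lemma:Ainfty_equiv_def} and descend along the quotient map $\pi\colon \redcoTA(\shift A) \twoheadrightarrow \cofreeLie(\shift A)$. Starting from a $\Cinfty$-algebra structure on $A$, \Cref{lemma:Ainfty_equiv_def} produces a coderivation $b$ on $\coTA(\shift A)$ that is a perturbation of $d$ and decreases weight. The first key step is to verify that the $\Cinfty$-axiom \eqref{eq:cinfty axiom} is equivalent to $b$ being in addition a derivation of the shuffle product $\shuffle$. The forward implication is the one already indicated in the text preceding the statement; for the converse, one observes that if $b$ is a shuffle derivation vanishing on $1$, then its linear part $\coTA(\shift A) \to \shift A$ vanishes on all non-trivial shuffle products, which translates (up to suspensions and signs) into $m_{p+q} \after \tau_{p,q} = 0$ for all $p,q\ge 1$.

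Once this equivalence is in hand, the forward direction of the bijection is immediate: a shuffle-derivation coderivation $b$ preserves the ideal $\redcoTA(\shift A) \shuffle \redcoTA(\shift A)$ and therefore descends to a degree $-1$ coderivation $\bar b$ on $\cofreeLie(\shift A)$, which clearly inherits the weight-decreasing property and still satisfies $(d + \bar b)^2 = 0$.

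For the reverse direction, I would invoke the universal property of the cofree Lie coalgebra: a coderivation on $\cofreeLie(\shift A)$ is uniquely determined by its corestriction to cogenerators, that is, by the linear map $\cofreeLie(\shift A) \to \shift A$ obtained by projecting onto the weight $1$ component. Given $\bar b$ as in the proposition, composing this corestriction with $\pi$ produces a linear map $\redcoTA(\shift A) \to \shift A$ vanishing on non-trivial shuffles, which by the analogous universal property of the tensor coalgebra (used implicitly in \Cref{lemma:Ainfty_equiv_def}) extends uniquely to a coderivation $b$ on $\coTA(\shift A)$. Since its linear part vanishes on shuffles, $b$ is itself a shuffle derivation, and the perturbation property transfers back from $\bar b$ using the faithful nature of $\pi$ on the indecomposable quotient. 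The two assignments are then mutually inverse by construction.

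\textbf{Expected main obstacle.} The principal technical point is the sign- and suspension-bookkeeping needed to identify the $\Cinfty$-axiom with the shuffle-derivation condition on $b$. A subtler conceptual point is the universal property of $\cofreeLie$ invoked in the reverse direction, which implicitly relies on the characteristic $0$ hypothesis both to identify the shuffle indecomposables of $\coTA(\shift A)$ with $\cofreeLie(\shift A)$ and to ensure that a coderivation is determined by its corestriction to cogenerators.
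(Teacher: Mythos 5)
Your proposal is correct and follows essentially the same route the paper indicates: identify the $\Cinfty$-axiom with $b$ being a derivation of the shuffle product, descend to the shuffle indecomposables $\redcoTA(\shift A)/(\redcoTA(\shift A)\shuffle\redcoTA(\shift A))\iso\cofreeLie(\shift A)$, and reverse the construction via the universal property of coderivations on cofree conilpotent coalgebras (the paper outsources these steps to \cite[Proposition 5.5]{GetzlerJones94} and \cite[Proposition 13.1.6]{LodayVallette12}, whereas you spell them out). The only point worth tightening is the transfer of $(d+b)^2=0$ back and forth: the clean argument is that $(d+b)^2$ is itself a coderivation and shuffle derivation, hence is determined by its linear part, which factors through $\pi$ and agrees with the linear part of $(d+\bar b)^2$.
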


\begin{definition}
  Let $A$ be a $\Cinfty$-algebra. We define the dg Lie coalgebra
  \[ \Harr * (A)  \defeq  \bigl( \cofreeLie (\shift A), d + b \bigr) \]
  where $d + b$ is as in \Cref{lemma:Cinfty_equiv_def}.
  We write $\coHarr * (A) \defeq \dual{\Harr * (A)}$ for the dual dg Lie algebra.
\end{definition}

\begin{remark}
A commutative dg algebra (cdga) is the same thing as a $\Cinfty$-algebra $A$ such that $m_n = 0$ for $n\geq 3$. In this case, $\Harr * (A)$ agrees with the classical Harrison complex of \cite{Harrison62} (see also \cite[\S 13.1.7]{LodayVallette12}). The dual $\coHarr * (A)$ agrees with the construction studied in \cite[I.1.(7)]{Tanre83}.
\end{remark}

\subsection{\texorpdfstring{$\Linfty$}{L∞}-algebras}
$\Linfty$-algebras are a Lie algebra analog of $\Ainfty$-algebras. One of the earliest sources is Lada--Stasheff \cite{LadaStasheff93}, where they are called ``strongly homotopy Lie algebras''.

\begin{definition}
  An \emph{$\Linfty$-algebra} structure on a chain complex $(L, d)$ is a family of operations
  \[ l_n \colon L^{\tensor n}  \longto  L, \quad n = 2, 3, \dots \]
  of degree $n - 2$ such that each $l_n$ is anti-symmetric (i.e.\ invariant under the sign action of $\Symm n$) and such that
  \[ \del(l_{n}) = \sum_{\substack{p, q \ge 2 \\ p + q = n + 1}} \sum_{\sigma \in \Shuffles q {p - 1}} \sgn(\sigma) (-1)^{p(q-1)} l_p \after (l_q \tensor \id^{\tensor p - 1}) \after \inv{\sigma} \]
  holds.\footnote{There are different sign conventions in the literature. The convention used here agrees with \cite{LadaStasheff93} but differs from \cite{LodayVallette12}.}
  Here $\del$ is the differential of the chain complex $\Hom(L^{\tensor n}, L)$.
\end{definition}

Note that for $n = 2$ the relation above reads as
\[ \del(l_2) = 0 \]
which is equivalent to $l_2$ being a map of chain complexes.
Similarly, writing $[\blank, \blank] \defeq l_2(\blank, \blank)$, we obtain for $n = 3$ that $\del(l_3)(\alpha_1, \alpha_2, \alpha_3)$ is equal to
\[ [[\alpha_1, \alpha_2], \alpha_3] - (-1)^{\deg{\alpha_2} \deg{\alpha_3}} [[\alpha_1, \alpha_3], \alpha_2] + (-1)^{\deg{\alpha_1} (\deg{\alpha_2} + \deg{\alpha_3})} [[\alpha_2, \alpha_3], \alpha_1] \]
so that $l_3$ provides a witness for the fact that $l_2$ fulfills the graded Jacobi relation after taking homology.
In particular $l_2$ equips the homology of $L$ with a Lie algebra structure.
This means that we can think of the operations $l_n$ as ``resolving the Jacobi relation''.

Again there is an alternative compact definition of $\Linfty$-algebras (cf.\ \cite[\S 3]{LadaStasheff93}).
We write $\coSA(V)$ for the \emph{symmetric coalgebra} on $V$, that is
$$\coSA(V) \defeq \Dirsum_{n \ge 0} \coinv {(V^{\tensor n})} {\Symm n}$$
equipped with the canonical structure of a counital cocommutative coassociative coalgebra.
The direct sum decomposition yields an additional grading on $\coSA(V)$, which we call the \emph{weight grading} and whose $n$-th graded piece we denote by $\weight n {\coSA (V)}$.

\begin{proposition} \label{lemma:Linfty_equiv_def}
Let $L$ be a chain complex. There is a bijection between $\Linfty$-algebra structures on $L$ and degree $-1$ self-maps $b$ of the cocommutative coalgebra $\coSA(\shift L)$ such that the following conditions are fulfilled:
  \begin{itemize}
    \item $b$ is a coderivation.
    \item $b$ is a perturbation of the differential $d$ on $\coSA(\shift L)$, i.e.\ $(d+b)^2 = 0$.
    \item $b$ decreases weight, i.e.\ $b \big( \weight w {\coSA (\shift L)} \big) \subseteq \weight {<w} {\coSA (\shift L)}$.
  \end{itemize}
\end{proposition}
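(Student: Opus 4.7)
The plan is to mimic the proof sketch of \Cref{lemma:Ainfty_equiv_def}, with $\coSA(\shift L)$ replacing $\coTA(\shift A)$ and its universal property as the cofree conilpotent cocommutative coalgebra on $\shift L$ playing the central role.

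First I would observe that a coderivation $b$ of $\coSA(\shift L)$ is uniquely determined by its corestriction
\[ \coSA(\shift L) \xlongto{b} \coSA(\shift L) \xlongto{\pr} \shift L , \]
where $\pr$ denotes the projection onto the weight-$1$ summand. The hypothesis that $b$ decreases weight forces this corestriction to vanish on the weight-$0$ part and to split by weight into components $b_n \colon \weight n {\coSA(\shift L)} \to \shift L$ for $n \ge 1$. The component $b_1$ is pinned down (up to a standard sign) to the suspension of the differential $d$ of $L$, while the components $b_n$ for $n \ge 2$ constitute the free data.

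Next, the canonical isomorphism $\weight n {\coSA(\shift L)} \cong \coinv{(\shift L)^{\tensor n}}{\Symm n}$ identifies each $b_n$ with a degree $-1$ map $(\shift L)^{\tensor n} \to \shift L$ invariant under the \emph{symmetric} action of $\Symm n$. Passing through the décalage isomorphism $(\shift L)^{\tensor n} \cong \shift^n L^{\tensor n}$, this symmetric action is converted via Koszul signs into the signed action on $L^{\tensor n}$, and $b_n$ corresponds to an antisymmetric map $l_n \colon L^{\tensor n} \to L$ of degree $n - 2$. This establishes the desired bijection on the level of the underlying operations.

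Third, the equation $(d + b)^2 = 0$ is unpacked by projecting onto the weight-$1$ summand $\shift L$ and reading off the weight-$n$ component. Since $b$ is a coderivation, $b \after b$ on a weight-$n$ element can be computed by applying the reduced coproduct followed by $b \tensor b$; the shuffle structure of the cocommutative coproduct in $\coSA(\shift L)$ produces, for each splitting $p + q = n + 1$ with $p, q \ge 2$, exactly a sum indexed by $(q, p-1)$-shuffles. Transported across the décalage isomorphism, the Koszul signs, the suspension signs and the shuffle signs combine to yield precisely the coefficients $\sgn(\sigma)(-1)^{p(q-1)}$ appearing in the $\Linfty$-relation, while the contribution from the internal differential $d$ of $\coSA(\shift L)$ becomes $\del(l_n)$ on the left-hand side.

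The main obstacle is the meticulous sign bookkeeping, which—as the footnote following the definition of $\Linfty$-algebra warns—is convention-dependent and differs across the literature. The conceptual content, that coderivations on the cofree conilpotent cocommutative coalgebra are freely generated by their corestrictions and that the cocommutative analog of the bar differential encodes Lie-type higher structure, is a direct adaptation of the $\Ainfty$ case once the symmetrization is in place.
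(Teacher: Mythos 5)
Your proposal is correct and follows essentially the same route as the paper, which states this proposition without proof but sketches exactly this argument in the $\Ainfty$ case (\Cref{lemma:Ainfty_equiv_def}): your version is the direct cocommutative analogue, with the cofree conilpotent coalgebra's universal property, the d\'ecalage converting symmetric to antisymmetric operations, and the shuffle decomposition of the coproduct producing the $\Linfty$-relations. One small internal inconsistency worth fixing: since $b$ is required to \emph{strictly} decrease weight, its weight-one corestriction component $b_1$ must vanish --- the suspension of the differential of $L$ is carried by the internal differential $d$ on $\coSA(\shift L)$, not by $b$ --- and indeed your third paragraph already treats it this way when it attributes $\del(l_n)$ to the interaction with $d$.
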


Dg Lie algebras may be identified with $\Linfty$-algebras $L$ such that $l_n = 0$ for $n \ge 3$. For such $L$, the dg coalgebra $\big(\coSA(\shift L),d+b\big)$ agrees with Quillen's generalization of the Chevalley--Eilenberg complex (as defined in \cite[Chapter 22(b)]{FelixHalperinThomas01}). This justifies the following definition.

\begin{definition} \label{def:CE}
  The \emph{Chevalley--Eilenberg complex} of an $\Linfty$-algebra $L$ is the cocommutative dg coalgebra
  $$\CE{*}(L) \defeq \bigl( \coSA(\shift L), d + b \bigr),$$
  where $d + b$ is as in \Cref{lemma:Linfty_equiv_def}.
  We also write $\coCE{*}(L) \defeq \dual{\CE{*}(L)}$ for the dual commutative dg algebra.
\end{definition}

\subsection{Unified theory of \texorpdfstring{$\infty$}{∞}-algebras via algebraic operads}
The reader will have noticed certain patterns in the above discussion of different types of $\infty$-algebras. The theory of Koszul duality for operads, going back to \cite{GinzburgKapranov94,GetzlerJones94}, provides a framework for a unified treatment. A comprehensive introduction can be found in \cite{LodayVallette12} and in what follows we will use notation and terminology from this source. However, a reader who is unfamiliar with this theory and mainly interested the examples of $\Ainfty$-, $\Cinfty$- or $\Linfty$-algebras can in principle read the remainder of the section simplistically by interpreting ``$\operad P_\infty$'' as a placeholder for $\Ainfty$, $\Cinfty$ or $\Linfty$ --- the main point is that the argument follows the same pattern in each case.


For the unified treatment, we fix a dg operad $\operad P$ and a cofibrant resolution
$$\operad P_\infty \xlongrightarrow{\eq} \operad P.$$
For convenience, we assume that $\operad P_\infty$ is of the form $\Omega \operad C$, i.e.\ that it is the cobar construction of some dg cooperad $\operad C$.
Not every resolution is of this form, but resolutions of this form always exist. In general, one can take $\operad C$ to be the bar construction $\Bar \operad P$.
If $\operad P$ is a Koszul operad, a smaller (in fact minimal) resolution is obtained by taking $\operad C$ to be the Koszul dual cooperad $\Koszuldual{\operad P}$.

The main examples are the operads
$$\Ass, \quad \Com, \quad \Lie,$$
governing associative, commutative and Lie algebras, respectively. These operads are Koszul and their Koszul dual cooperads may be identified with $$\Koszuldual{\Ass} = \dual{(\opshift \Ass)}, \quad \Koszuldual{\Com} = \dual{(\opshift \Lie)}, \quad \Koszuldual{\Lie} = \dual{(\opshift \Com)},$$
respectively, where $\opshift$ denotes the operadic suspension and $(-)^\vee$ denotes linear dual.
The resulting cofibrant resolutions are the dg operads
$$\Ass_\infty = \Cobar \dual{(\opshift \Ass)},\quad \Com_\infty = \Cobar \dual{(\opshift \Lie)}, \quad \Lie_\infty = \Cobar \dual{(\opshift \Com)},$$
that govern $\Ainfty$-, $\Cinfty$-, and $\Linfty$-algebras, respectively (cf.\ \cite[\S 10.1.5 f.\, \S 13.1.8]{LodayVallette12}).

We will now discuss a common generalization of the constructions $\coTA$, $\cofreeLie$, and $\coSA$. The \emph{cofree conilpotent} $\operad C$-coalgebra on a chain complex $A$ is the chain complex
\[ \Schur {\operad C} A = \Dirsum_{n = 0}^{\infty} \operad C(n) \tensor[\Symm n] A^{\tensor n}. \]
The cooperad structure of $\operad C$ induces a natural $\operad C$-coalgebra structure on $\Schur {\operad C} A$.
We call $\weight w {\Schur {\operad C} A}  \defeq  \operad C(w) \tensor[\Symm w] A^{\tensor w}  \subseteq  \Schur {\operad C} A$ the \emph{weight} $w$ part of $\Schur {\operad C} A$. The following can be found in \cite[Proposition 2.15]{GetzlerJones94}.

\begin{proposition} \label{lemma:Pinfty_equiv_def}
  Let $A$ be a chain complex.
  There is a bijection between $\operad P_\infty$-algebra structures on $A$ and degree $-1$ self-maps $b$ of $\Schur {\operad C} A$ such that the following conditions are fulfilled:
  \begin{itemize}
    \item $b$ is a coderivation of the $\operad C$-coalgebra $\Schur {\operad C} A$.
    \item $b$ is a perturbation of the differential $d$ on $\Schur {\operad C} A$, i.e.\ $(d+b)^2 =0$.
    \item $b$ decreases weight, i.e.\ $b \big( \weight w {\Schur {\operad C} A} \big) \subseteq \weight {<w} {\Schur {\operad C} A}$.
  \end{itemize}
\end{proposition}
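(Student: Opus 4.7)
The plan is to identify both sides of the claimed bijection with the set of twisting morphisms $\alpha \colon \operad C \to \mathrm{End}_A$, i.e.\ with Maurer--Cartan elements in the convolution dg Lie algebra of $\Symm$-equivariant degree $-1$ maps $\operad C \to \mathrm{End}_A$. Here $\mathrm{End}_A$ denotes the endomorphism operad with $\mathrm{End}_A(n) = \Hom(A^{\tensor n}, A)$.

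On the operad side, this is immediate from the definition of $\operad P_\infty = \Omega \operad C$: a $\operad P_\infty$-algebra structure on $A$ is by definition a morphism of dg operads $\Omega \operad C \to \mathrm{End}_A$, and by the standard cobar--$\Hom$ adjunction such morphisms correspond bijectively to twisting morphisms $\alpha \in \mathrm{Tw}(\operad C, \mathrm{End}_A)$ (see \cite[Ch.\ 6]{LodayVallette12}).

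On the coalgebra side, I would use that any coderivation $b$ of the cofree conilpotent $\operad C$-coalgebra $\Schur {\operad C} A$ is uniquely determined by its corestriction $\mathrm{pr} \after b \colon \Schur{\operad C} A \to A$ to the cogenerators. Decomposing this corestriction by weight yields a family of $\Symm_w$-equivariant degree $-1$ maps $\alpha_w \colon \operad C(w) \to \mathrm{End}_A(w)$, equivalently a single map $\alpha \colon \operad C \to \mathrm{End}_A$. The weight-decreasing hypothesis says precisely that $\alpha$ vanishes on the counit $\mathbb{1} \subseteq \operad C(1)$, which matches the normalization built into the notion of a twisting morphism. It then remains to translate the square-zero condition $(d+b)^2 = 0$ into the Maurer--Cartan equation $\del(\alpha) + \alpha \star \alpha = 0$ by expressing $b \after b$ in terms of $\alpha$, the infinitesimal decomposition of the cooperad $\operad C$, and the infinitesimal composition of $\mathrm{End}_A$.

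The main obstacle is precisely this last translation: it is a sign-sensitive calculation requiring one to track how the coderivation $b$ propagates through the $\Symm_w$-coinvariant tensors $\operad C(w) \tensor[\Symm w] A^{\tensor w}$, and it is the reason why the three concrete avatars \Cref{lemma:Ainfty_equiv_def}, \Cref{lemma:Cinfty_equiv_def}, and \Cref{lemma:Linfty_equiv_def} come with quite different-looking explicit relations. Once this translation is verified, the two sides fit into a commuting square of natural bijections with $\mathrm{Tw}(\operad C, \mathrm{End}_A)$ and produce the desired bijection; a detailed account appears in \cite[\S 10.1]{LodayVallette12}.
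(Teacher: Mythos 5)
Your proof is correct and takes essentially the same route as the paper, which gives no argument of its own for this general statement (it defers to Getzler--Jones) and whose sketch of the associative special case in \Cref{lemma:Ainfty_equiv_def} is precisely your argument without the twisting-morphism packaging: coderivations of the cofree conilpotent coalgebra are determined by their corestriction to cogenerators, and the square-zero condition unwinds to the structure relations, i.e.\ the Maurer--Cartan equation in the convolution algebra. Nothing further is needed.
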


\begin{definition} \label{def:P bar}
  Let $A$ be a $\operad P_\infty$-algebra.
  The \emph{bar construction} of $A$ is the dg $\operad C$-coalgebra
  $$\Bar_{\operad P} A \defeq \bigl( \Schur {\operad C} A, d + b \bigr),$$
  where $d + b$ is as in \Cref{lemma:Pinfty_equiv_def}.
\end{definition}

\begin{remark}
   \Cref{lemma:Pinfty_equiv_def} subsumes \cref{lemma:Ainfty_equiv_def,lemma:Cinfty_equiv_def,lemma:Linfty_equiv_def}. Furthermore, we recover the constructions discussed in the previous section, up to suspension. Indeed, if $\operad P$ is equal to $\Ass$, $\Com$, or $\Lie$, and if $\operad C = \Koszuldual{\operad P}$, then the suspension of the bar construction $\Bar_{\operad P} A$ may be identified with
  $\Bar A$, $\Harr{*}(A)$, or $\CE{*}(A)$, respectively.
\end{remark}

\subsection{\texorpdfstring{$\infty$}{∞}-morphisms}
As in the previous section, we consider a dg operad $\operad P$ together with a resolution
$$\operad P_\infty \xlongto{\eq} \operad P$$
of the form $\operad P_\infty = \Cobar \operad C$ for a dg cooperad $\operad C$.
We will assume that $\operad C$ is connected in the sense that $\operad C(0) =0$ and $\operad C(1) = \QQ$.

A morphism of $\operad P_\infty$-algebras $A\to A'$ is a morphism of chain complexes that commutes with the $\operad P_\infty$-algebra structure maps. By relaxing this requirement up to coherent homotopy, one arrives at the notion of $\infty$-morphisms, or strongly homotopy maps (sometimes referred to as ``shmaps'' \cite{StasheffHalperin70}).
The following definition can be compared to \cite[\S10.2.2]{LodayVallette12}.

\begin{definition}
  Let $A$ and $A'$ be $\operad P_\infty$-algebras.
  A \emph{$\operad P_\infty$-morphism}, or \emph{$\infty$-mor\-phism}, from $A$ to $A'$, written
  $$f \colon A \longinftyto A',$$
  is a morphism $f\colon \Bar A \to \Bar A'$ of $\operad C$-coalgebras.
  
 The linear part of an $\infty$-morphism $f$ is the chain map
 $$f_1\colon A \longto A'$$
 given by extracting the weight one component of the map
 $$f\colon \bigoplus_{n\geq 1} \operad C(n) \otimes_{\Symm{n}} A^{\otimes n} \longto \bigoplus_{n\geq 1} \operad C(n) \otimes_{\Symm{n}} (A')^{\otimes n}.$$
 Since the coderivation $b$ on $BA$ decreases weight, one sees that $f_1$ is a chain map. We say that $f$ is an \emph{$\infty$-isomorphism} if $f_1$ is an isomorphism, and that $f$ is a \emph{$\infty$-quasi-isomorphism} if $f_1$ is a quasi-isomorphism.
\end{definition}
It should be noted that the notion of $\infty$-isomorphism is much more flexible than the usual notion of an isomorphism. See \cref{remark:infinity-isomorphism} below for an example that illustrates this point.

\begin{remark}
Note that the definition of $\infty$-morphisms makes sense, and is interesting, also in the case when $A$ and $A'$ are ordinary $\operad P$-algebras. For instance, the category $\mathbf{DASH}$ of associative dg algebras and strongly homotopy multiplicative maps has been studied in e.g.~\cite{GugenheimMunkholm74}.
\end{remark}

\begin{lemma} \label{lemma:rectification}
  Assume that $\operad C$ is equipped with a coaugmention.
  Then two $\operad P$-algebras $A$ and $A'$ are $\infty$-quasi-isomorphic if and only if they are quasi-isomorphic.
\end{lemma}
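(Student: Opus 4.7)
The plan is to invoke the bar-cobar adjunction between dg $\operad P$-algebras and coaugmented conilpotent dg $\operad C$-coalgebras associated with the twisting morphism $\iota \colon \operad C \to \operad P$ underlying the resolution $\operad P_\infty = \Cobar \operad C \xlongto{\eq} \operad P$; see \cite[Chapter 11]{LodayVallette12}. The coaugmentation hypothesis on $\operad C$ is exactly what is needed to define the cobar functor $\Cobar_\iota$ and establish this adjunction.

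The easy direction is immediate: a strict morphism $f \colon A \to A'$ of $\operad P$-algebras can be viewed as an $\infty$-morphism with linear part $f$ and all higher components trivial, so any strict quasi-isomorphism is in particular an $\infty$-quasi-isomorphism. Combining strict quasi-isomorphisms in a zigzag with the standard fact that $\infty$-quasi-isomorphisms admit $\infty$-quasi-inverses and compose \cite[Theorem 10.4.4]{LodayVallette12} then gives the conclusion.

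For the main direction, let $f \colon A \longinftyto A'$ be an $\infty$-quasi-isomorphism, i.e.\ a morphism of dg $\operad C$-coalgebras $F \colon \Bar_{\operad P} A \to \Bar_{\operad P} A'$ whose weight-one component $f_1$ is a quasi-isomorphism. The first step is to show that $F$ is itself a quasi-isomorphism on the underlying chain complexes. The weight filtration $\weight{\leq w}{\Bar_{\operad P} A}$ is exhaustive and bounded below (thanks to the coaugmentation), preserved by $F$, and the coderivation strictly decreases weight, so the induced differential on the associated graded reduces to the tensor-product differential on $\operad C(w) \tensor_{\Symm w} A^{\tensor w}$. By the Künneth theorem the map induced by $F$ on $E^1$ is $\operad C(w) \tensor_{\Symm w} H(f_1)^{\tensor w}$ and hence an isomorphism, so $F$ itself is a quasi-isomorphism by convergence of the spectral sequence. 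Applying $\Cobar_\iota$ yields a strict morphism $\Cobar_\iota F \colon \Cobar_\iota \Bar_{\operad P} A \to \Cobar_\iota \Bar_{\operad P} A'$ of $\operad P$-algebras; a dual weight-filtration argument (again requiring the coaugmentation of $\operad C$) shows that $\Cobar_\iota$ preserves this class of quasi-isomorphisms, so $\Cobar_\iota F$ is itself a quasi-isomorphism of $\operad P$-algebras. The fundamental theorem of bar-cobar \cite[Theorem 11.3.3]{LodayVallette12} supplies natural strict quasi-isomorphisms $\varepsilon_A \colon \Cobar_\iota \Bar_{\operad P} A \xlongto{\eq} A$ and $\varepsilon_{A'} \colon \Cobar_\iota \Bar_{\operad P} A' \xlongto{\eq} A'$ of $\operad P$-algebras, producing the zigzag
\[ A \longleftarrow \Cobar_\iota \Bar_{\operad P} A \longto \Cobar_\iota \Bar_{\operad P} A' \longto A' \]
of strict quasi-isomorphisms of $\operad P$-algebras, as required.

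The main obstacle is verifying that both $\Bar_{\operad P}$ and $\Cobar_\iota$ interact well with the weight filtrations so as to preserve the relevant quasi-isomorphisms, and that the counit of the bar-cobar adjunction is itself a quasi-isomorphism. Each of these inputs depends essentially on the coaugmentation of $\operad C$, which is precisely why it appears in the hypothesis.
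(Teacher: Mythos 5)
Your proof is correct in outline and shares the paper's skeleton: both directions reduce to the zig-zag $A \longleftarrow \Cobar \Bar A \longto \Cobar \Bar A' \longto A'$ supplied by the bar--cobar adjunction, with the counits being quasi-isomorphisms. Where you genuinely diverge from the paper is at the one nontrivial step, namely why $\Cobar f$ is a quasi-isomorphism. You argue in two stages: first a weight-filtration spectral sequence showing that $\Bar_{\operad P} f$ is a quasi-isomorphism of underlying complexes (this part is fine, since that filtration is exhaustive and bounded below and the coderivation strictly decreases weight), and then the assertion that $\Cobar_\iota$ sends such maps to quasi-isomorphisms. That second stage is the delicate point: the cobar construction does \emph{not} preserve arbitrary quasi-isomorphisms of conilpotent $\operad C$-coalgebras, and its natural weight filtration is decreasing, so convergence of the associated spectral sequence is not automatic; one really has to use that $\Bar_{\operad P} f$ is a \emph{filtered} quasi-isomorphism and invoke (or reprove) the corresponding weak-equivalence statement from \cite[\S 11]{LodayVallette12}. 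The paper sidesteps all of this with a softer argument: the coaugmentation of $\operad C$ gives a section $A \to \Cobar \Bar A$ of the counit, hence itself a quasi-isomorphism, and these sections fit into a commutative square whose bottom edge is the linear part $f_1$; since $f_1$ and both sections are quasi-isomorphisms, so is $\Cobar f$ by two-out-of-three. That argument uses only the hypothesis that $f_1$ is a quasi-isomorphism and needs no spectral sequences, and it makes the role of the coaugmentation completely transparent. Your route is the standard general-purpose rectification argument and is perfectly viable, but if you take it you should either cite the precise statement that $\Cobar_\iota$ preserves filtered quasi-isomorphisms or spell out the convergence argument rather than calling it a ``dual'' of the bar-side filtration.
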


\begin{proof}
  The ``if'' direction is clear.
  For the ``only if'' direction it is enough to consider the case where there is a $\infty$-quasi-isomorphism $f \colon A \inftyto A'$, i.e.\ a quasi-isomorphism of $\operad C$-coalgebras $f \colon \Bar A \to \Bar A'$.
  The bar--cobar adjunction
  \[
  \begin{tikzcd}
    \Coalg[\operad C] \rar[bend left]{\Cobar}[swap, name = U]{} & \Alg[\operad P] \lar[bend left]{\Bar}[swap, name = D]{}
    \ar[from = U, to = D, phantom]{}{\vertdashv}
  \end{tikzcd}
  \]
  yields a zig-zag of maps of $\operad P$-algebras
  \[ A  \xlongfrom{\eq}  \Cobar \Bar A  \xlongto{\Cobar f}  \Cobar \Bar A'  \xlongto{\eq}  A' \]
  where the counit maps are quasi-isomorphism (cf.\ \cite[\S 11.3]{LodayVallette12}).
  To see that $\Cobar f$ is a quasi-isomorphism as well, we note that, since $\operad C$ is coaugmented, there is a commutative square
  \[
  \begin{tikzcd}
    \Cobar \Bar A  \rar{\Cobar f} & \Cobar \Bar A' \\
    A \rar{\eq}[swap]{f_1} \uar & A' \uar
  \end{tikzcd}
  \]
  where $f_1$ is the linear part of $f$, such that each of the two vertical maps is a section of the respective counit map, and hence a quasi-isomorphism itself.
\end{proof}

\subsection{Homotopy transfer theorem}
A principal advantage that $\operad P_\infty$-algebras have over $\operad P$-algebras is that they are homotopy invariant. The basic principles for homotopy invariant algebraic structures in topology were laid down by Boardman--Vogt \cite{BoardmanVogt73}. For an adaptation to the differential graded context, see \cite{Markl04}. One of the defining properties of homotopy invariant algebraic structures is that they can be transferred along homotopy equivalences. This is the main content of the homotopy transfer theorem.

There is an abundance of different approaches to, and variants of, the homotopy transfer theorem for $\operad P_\infty$-algebras and we will not attempt to give a complete account of all of these here.
Instead, we will give a short proof that uses homological perturbation theory, following \cite{Berglund14}.
A feature of this approach is that it works entirely with the compact description of $\operad P_\infty$-algebras as coderivation differentials. Moreover, it yields explicit formulas that are easier to work with than the tree formulas (as found in e.g.~\cite[\S10.3]{LodayVallette12}).

Homological perturbation theory is a tool that arose in the study of chain models for fibrations \cite{Brown65,Gugenheim72}.
The idea of using homological perturbation theory to transfer algebraic structures encoded by (co)derivation differentials was successfully realized for $\Ainfty$-algebras early on, see e.g.\ \cite{GLS91,HuebschmannKadeishvili91}.
The hurdles for realizing this idea for algebras over general operads, allowing the unified treatment presented here, were overcome in \cite{Berglund14}. The formulation and proof we give here are taken from \cite[Theorem 1.3]{Berglund14}.

\begin{theorem}[Homotopy transfer theorem]
  Let $\operad C$ be a dg cooperad which is connected in the sense that $\operad C(0) = 0$ and $\operad C(1) = \QQ$ and let $\operad P_\infty = \Cobar \operad C$. Let
  \[
  \begin{tikzcd}
    A \ar[loop, out = 150, in = 210, distance = 2em, "h"'] \rar[shift left]{f} & B \lar[shift left]{g}
  \end{tikzcd}
  \]
  be a contraction of chain complexes, i.e.\ $f$ and $g$ are degree $0$ chain maps, $h$ is a map of degree $1$, and the following equations hold: $fg = \id$, $gf = {\id} + \del(h)$, $fh = 0$, $hg = 0$, and $h^2 = 0$.
  For every $\operad P_\infty$-algebra structure $b$ on $A$, there is an induced $\operad P_\infty$-algebra structure $b'$ on $B$ and extensions of $f$ and $g$ to $\infty$-quasi-isomorphisms.
\end{theorem}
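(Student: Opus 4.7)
The plan is to apply homological perturbation theory (HPT) to a contraction lifted from $(f,g,h)$ to the level of cofree $\operad C$-coalgebras, and then to interpret the outcome via \Cref{lemma:Pinfty_equiv_def}.

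First I would extend $(f,g,h)$ to a contraction between $\Schur{\operad C}{A}$ and $\Schur{\operad C}{B}$. The maps $F = \Schur{\operad C}{f}$ and $G = \Schur{\operad C}{g}$ are defined by functoriality of the Schur construction, and are automatically morphisms of $\operad C$-coalgebras preserving the weight grading. The homotopy $H$ I would build by the standard tensor trick, symmetrised so that it descends to $\Symm{n}$-coinvariants, arranged level by level so that the side conditions $FH = 0$, $HG = 0$, $H^2 = 0$ hold and so that $H$ preserves weight.

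By \Cref{lemma:Pinfty_equiv_def}, the $\operad P_\infty$-structure $b$ is a coderivation of $\Schur{\operad C}{A}$ which is a perturbation of the differential and strictly decreases the weight grading. Since $H$ preserves weight while $b$ strictly decreases it, the composite $bH$ acts nilpotently on each weight-$n$ summand, so the formal series $(1 - bH)^{-1} = \sum_{k \ge 0}(bH)^k$ converges pointwise. The basic perturbation lemma then produces a perturbation $b'$ of the differential on $\Schur{\operad C}{B}$, together with perturbed maps $F_\infty$, $G_\infty$, $H_\infty$ assembling into a new contraction. From the explicit series expressions one reads off that $b'$ strictly decreases weight.

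The main obstacle is to show that $b'$ is a coderivation of the $\operad C$-coalgebra $\Schur{\operad C}{B}$ and that $F_\infty$, $G_\infty$ are morphisms of $\operad C$-coalgebras — not merely chain maps. This I would argue from the facts that $F,G$ are coalgebra maps and $b$ is a coderivation, together with a compatibility property of the lifted homotopy $H$ with the cooperadic coproduct that is built into the tensor trick: each term in the HPT series for $b'$, $F_\infty$, $G_\infty$ is then a composition of maps respecting the coalgebra structure in a controlled way, and one verifies the required (co)derivation and coalgebra identities weight-piece by weight-piece, where all sums are finite. Once this is established, \Cref{lemma:Pinfty_equiv_def} equips $B$ with a $\operad P_\infty$-structure and $F_\infty$, $G_\infty$ become $\infty$-morphisms whose linear parts are $f$ and $g$ respectively — hence $\infty$-quasi-isomorphisms, as required.
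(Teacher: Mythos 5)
Your proposal is correct and follows essentially the same route as the paper: lift the contraction to $\Schur{\operad C}{A} \rightleftarrows \Schur{\operad C}{B}$ via a symmetrized tensor-trick homotopy, invoke the basic perturbation lemma (with convergence guaranteed because $b$ strictly decreases weight), and then verify that the perturbed data are a coderivation and $\operad C$-coalgebra morphisms whose weight-one parts are $f$ and $g$. You correctly single out the coderivation/coalgebra-morphism property as the one genuinely non-obvious step; the paper likewise flags it and defers the conceptual argument to \cite{Berglund14}, only adding the explicit Leibniz-harmonic-triangle coefficients for the symmetrized homotopy that you leave implicit.
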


\begin{proof}
  The first step is to extend the given contraction to a contraction
  \[
  \begin{tikzcd}
    \Schur {\operad C} A \ar[loop, out = 160, in = 200, distance = 2em, "H"'] \rar[shift left]{F} & \Schur {\operad C} B . \lar[shift left]{G}
  \end{tikzcd}
  \]
  Here, $\Schur {\operad C} A$ and $\Schur {\operad C} B$ are equipped with the internal differentials (which are induced by the ones of $\operad C$, as well as $A$ and $B$, respectively).
  The maps $F$ and $G$ are defined in the obvious way, $F= \Schur {\operad C} f$ and $G = \Schur {\operad C} g$.
  It is less obvious how to define $H$, but the following works: Define a self-map of $A^{\tensor n}$ by
  $$h_n^\Sigma = \sum_{j=1}^n \sum_{\substack{\epsilon\in\{0,1\}^n \\ \epsilon_j = 0}} a_{n,|\epsilon|+1} \pi^{\epsilon_1} \tensor \ldots \tensor \pi^{\epsilon_{j-1}} \tensor h \tensor \pi^{\epsilon_{j+1}} \tensor \ldots \tensor \pi^{\epsilon_n},$$
  where $\pi = gf$, $|\epsilon| = \epsilon_1+\ldots+\epsilon_n$, and
  $$a_{n,k} = \frac{1}{\binom{n}{k}k}.$$
  Since $h_n^\Sigma$ is symmetric, it induces a self-map of $\operad C(n) \tensor[\Sigma_n] A^{\tensor n}$ and $H$ is defined by taking the sum of $h_n^\Sigma$ over all $n$. One can in principle check by hand that this yields a contraction, but for a more conceptual proof we refer to \cite[\S5]{Berglund14}.
  
  Next, the ``basic perturbation lemma'' (cf.~\cite{Brown65} or \cite[\S3]{Gugenheim72}), where the perturbation $b$ is the input, produces a new contraction,
  \[
  \begin{tikzcd}
    (\Schur {\operad C} A, d + b) \ar[loop, out = 170, in = 190, distance = 2em, "H'"'] \rar[shift left]{F'} & (\Schur {\operad C} B, d + b'), \lar[shift left]{G'}
  \end{tikzcd}
  \]
  given by the explicit formulas
  \begin{align*}
    b' &= F \Sigma G, \\
    F' &= F (1 + \Sigma H), \\
    H' &= H (1 + \Sigma H), \\
    G' &= (1 + H \Sigma) G,
  \end{align*}
  where
  $$\Sigma = \sum_{k = 0}^\infty b (H b)^k.$$
  Note that the above series is finite when evaluated on an element since $b$ decreases weight.
  
  It is a non-obvious fact that $b'$ is a coderivation and that $F'$ and $G'$ are morphisms of $\operad C$-coalgebras. Again, this can in principle be checked by hand, but a more conceptual explanation can be found in \cite{Berglund14}.
  
  Lastly we note that in weight $1$ the maps $F'$ and $G'$ are given by $f$ and $g$, respectively, so that the former actually extend the latter.
  This also implies that $F'$ and $G'$ are $\infty$-quasi-isomorphisms.
\end{proof}

\begin{remark}
The coefficients $a_{n,k}$ in the formula for $h_n^\Sigma$ are the entries in ``Leibniz's harmonic triangle'' (see \cref{fig:Leibniz_triangle}): they can be defined by the recursive formulas
  \begin{align*}
      a_{n,1} & = \frac{1}{n}, \\
      a_{n,k} & = a_{n-1,k-1} - a_{n,k-1},
  \end{align*}
for $1<k\leq n$. For example,
  \begin{align*}
      h_1^\Sigma &= h, \\
      h_2^\Sigma & = \frac{1}{2}\big(h\tensor 1 + 1\tensor h\big) + \frac{1}{2} \big(h\tensor \pi + \pi\tensor h\big), \\
      h_3^\Sigma & = \frac{1}{3}\big(h\tensor 1 \tensor 1 + 1\tensor h\tensor 1 + 1\tensor 1 \tensor h\big) \\
      & + \frac{1}{6}\big(h\tensor \pi \tensor 1 + h\tensor 1 \tensor \pi +
      \pi\tensor h\tensor 1 + 1\tensor h\tensor \pi + \pi \tensor 1 \tensor h + 1 \tensor \pi \tensor h\big) \\
      & + \frac{1}{3}\big(h\tensor \pi \tensor \pi + \pi\tensor h\tensor \pi + \pi\tensor \pi \tensor h\big).
  \end{align*}
\begin{figure}
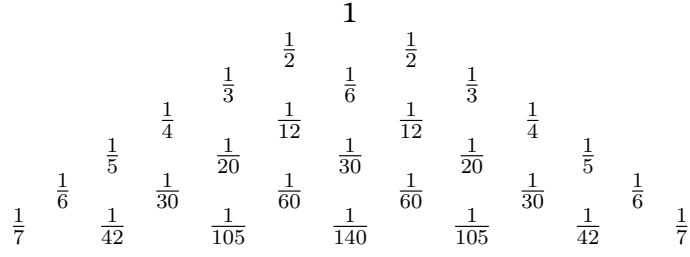

    \centering
\begin{tabular}{cccccccccccccc}
&&&&&&&1&&&&&&\\
&&&&&&$\frac{1}{2}$&&$\frac{1}{2}$&&&&&\\
&&&&&$\frac{1}{3}$&&$\frac{1}{6}$&&$\frac{1}{3}$&&&&\\
&&&&$\frac{1}{4}$&&$\frac{1}{12}$&&$\frac{1}{12}$&&$\frac{1}{4}$&&&\\
&&&$\frac{1}{5}$&&$\frac{1}{20}$&&$\frac{1}{30}$&&$\frac{1}{20}$&&$\frac{1}{5}$&&\\
&&$\frac{1}{6}$&&$\frac{1}{30}$&&$\frac{1}{60}$&&$\frac{1}{60}$&&$\frac{1}{30}$&&$\frac{1}{6}$&\\
&$\frac{1}{7}$&&$\frac{1}{42}$&&$\frac{1}{105}$&&$\frac{1}{140}$&&$\frac{1}{105}$&&$\frac{1}{42}$&&$\frac{1}{7}$
\end{tabular}
    \caption{Leibniz's harmonic triangle}
    \label{fig:Leibniz_triangle}
\end{figure}
\end{remark}

\subsection{Minimal \texorpdfstring{$\operad P_\infty$}{P∞}-algebra model} \label{sec:minimal}
A $\operad P_\infty$-algebra is called \emph{minimal} if its differential is trivial.
The property that justifies the terminology is that an $\infty$-quasi-isomorphism between minimal $\operad P_\infty$-algebras is an $\infty$-isomorphism. Also, as we will discuss in Section \ref{sec:nothing new} below, minimal $L_\infty$-algebras correspond to minimal Sullivan models and minimal $C_\infty$-algebras correspond to minimal Lie models.
The minimality theorem for $A_\infty$-algebras goes back Kadeishvili \cite{Kadeishvili82}.
The proof is an application of the homotopy transfer theorem.


\begin{theorem} \label{thm:minimal_Pinfty}
  Let $A$ and $A'$ be $\operad P$-algebras.
  \begin{itemize}
      \item There is a $\operad P_\infty$-algebra structure on $\Coho * (A)$ and $\infty$-quasi-isomorphisms from $A$ to $\Coho * (A)$ and vice versa.
      \item The $\operad P$-algebras $A$ and $A'$ are quasi-isomorphic if and only if $\Coho * (A)$ and $\Coho * (A')$ are $\infty$-isomorphic.
  \end{itemize}
\end{theorem}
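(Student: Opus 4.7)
The plan is to derive both statements from the homotopy transfer theorem combined with \cref{lemma:rectification}, exploiting the previously noted fact that an $\infty$-quasi-iso\-mor\-phism between minimal $\operad P_\infty$-algebras is automatically an $\infty$-iso\-mor\-phism.

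For the first bullet, since we work over a field, one can choose a contraction
\[
\begin{tikzcd}
A \ar[loop, out = 150, in = 210, distance = 2em, "h"'] \rar[shift left]{f} & \Coho{*}(A) \lar[shift left]{g}
\end{tikzcd}
\]
by splitting the short exact sequences relating boundaries, cycles, and homology in $A$. Regarding the $\operad P$-algebra $A$ as a $\operad P_\infty$-algebra via $\operad P_\infty \xlongto{\eq} \operad P$, the homotopy transfer theorem produces a $\operad P_\infty$-structure on $\Coho{*}(A)$ together with $\infty$-quasi-iso\-mor\-phisms $F_A \colon A \inftyto \Coho{*}(A)$ and $G_A \colon \Coho{*}(A) \inftyto A$ extending $f$ and $g$. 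The same construction applied to $A'$ yields analogous morphisms $F_{A'}$ and $G_{A'}$.

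For the second bullet, suppose first that there exists an $\infty$-iso\-mor\-phism $\phi \colon \Coho{*}(A) \inftyto \Coho{*}(A')$. Since an $\infty$-iso\-mor\-phism is in particular an $\infty$-quasi-iso\-mor\-phism, the composite $G_{A'} \circ \phi \circ F_A$ is an $\infty$-quasi-iso\-mor\-phism $A \inftyto A'$, and \cref{lemma:rectification} then shows that $A$ and $A'$ are quasi-isomorphic as $\operad P$-algebras. Conversely, if $A$ and $A'$ are quasi-isomorphic, \cref{lemma:rectification} provides an $\infty$-quasi-iso\-mor\-phism $\psi \colon A \inftyto A'$, whence the composite $F_{A'} \circ \psi \circ G_A \colon \Coho{*}(A) \inftyto \Coho{*}(A')$ is an $\infty$-quasi-iso\-mor\-phism between minimal $\operad P_\infty$-algebras. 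Its linear part is a quasi-iso\-mor\-phism of chain complexes with zero differential, hence an iso\-mor\-phism, so the composite is an $\infty$-iso\-mor\-phism as desired.

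The real substance of the proof lies in the homotopy transfer theorem and in \cref{lemma:rectification}; once these are in hand, the theorem amounts to a short piece of bookkeeping, and I do not expect any serious obstacle beyond correctly composing the ingredients. A minor subtlety worth flagging, though not strictly required above, is that ``$\infty$-iso\-mor\-phic'' is naturally a symmetric relation, reflecting the fact that a morphism of cofree conilpotent $\operad C$-coalgebras whose linear part is an iso\-mor\-phism is itself an iso\-mor\-phism, which can be verified by induction along the coradical filtration.
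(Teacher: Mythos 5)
Your proof is correct and follows essentially the same route as the paper: transfer along a contraction $A \rightleftarrows \Coho{*}(A)$ for the first bullet, then compose the resulting $\infty$-quasi-isomorphisms with the given (iso)morphism in each direction, using minimality in one direction and \cref{lemma:rectification} in the other. The only cosmetic difference is that the paper reduces the ``quasi-isomorphic'' hypothesis to a single direct quasi-isomorphism $f\colon A\to A'$ rather than extracting a single $\infty$-quasi-isomorphism from \cref{lemma:rectification}; both devices handle the zig-zag issue via the symmetry of $\infty$-isomorphism that you correctly flag at the end.
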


\begin{proof}
  For the first part we use the homotopy transfer theorem: over a field, one can always find a contraction between a cochain complex $A$ and its cohomology $\Coho * (A)$ (this is well-known, see e.g.\ \cite[Exercise 1.4.4]{WeibelBook} or \cite[Lemma B.1]{BerglundMadsen20} and \cite[Remark 2.1]{Berglund14}).
  
  For the second part we first prove the ``only if'' direction.
  To this end it is enough to consider the case where there is a quasi-isomorphism $f \colon A \to A'$. Combined with the
   $\infty$-quasi-isomorphisms from the first part, we obtain $\infty$-quasi-isomorphisms
  \[ \Coho * (A)  \xlonginftyto{\eq} A  \xlongto{f}  A'  \xlonginftyto{\eq}  \Coho * (A') \]
  the composite of which is an $\infty$-quasi-isomorphism from $\Coho * (A)$ to $\Coho * (A')$. Since the source and target are minimal, this is necessarily an $\infty$-isomorphism.
  
  For the ``if'' direction we use that an $\infty$-isomorphism from $\Coho * (A)$ to $\Coho * (A')$, combined with the $\infty$-quasi-isomorphisms from the first part,
  \[ A  \xlonginftyto{\eq}  \Coho * (A)  \xlonginftyto{\iso}  \Coho * (A')  \xlonginftyto{\eq}  A' \]
  yield an $\infty$-quasi-isomorphism $f \colon A \inftyto A'$.
  By \cref{lemma:rectification}, this implies that $A$ and $A'$ are quasi-isomorphic as desired.
\end{proof}

\subsection{Sullivan's rational homotopy theory}
There are many good sources on Sullivan's approach to rational homotopy theory. In addition to the original \cite{Sullivan77}, comprehensive accounts can be found, for example, in any of \cite{BousfieldGugenheim76, FelixHalperinThomas01, Berglund12, GriffithsMorgan13}.
We will here give a condensed summary pointing out the key facts needed for proving \cref{thm:C-infinity model} and \cref{thm:L-infinity model}, referring to the literature for proofs and further details.

\begin{definition} \label{def:omega}
  We denote by $\PDRforms \bullet$ the simplicial cdga given by
  \[ \PDRforms{n}  \defeq  \frac{\SA ( t_0, \dots, t_n, dt_0, \dots, dt_n )}{( t_0 + \dots + t_n - 1, dt_0 + \dots + dt_n )} \]
  with $t_k$ in degree $0$ and $dt_k$ in cohomological degree $1$ (i.e.\ homological degree $-1$); here $\SA$ denotes the free graded commutative algebra on the given generators.
  The differential $d$ is, as the notation suggests, determined by $d(t_k) = dt_k$. For a morphism $\varphi\colon [m]\to [n]$ in the simplex category $\Delta$, the cdga morphism $\varphi^*\colon \PDRforms{n} \to \PDRforms{m}$ is determined by
  \[\varphi^*(t_i) = \sum_{j\in \varphi^{-1}(i)} t_j.\]
\end{definition}

We can think of $\PDRforms{n}$ as de Rham forms on an $n$-simplex that are ``polynomial''.
We can extend this construction to a general simplicial set $X$ by taking one copy of $\PDRforms{n}$ for each $n$-simplex and gluing them together according to the simplicial structure.
The resulting cdga, which we denote by $\PDR(X)$, is called \emph{polynomial de Rham forms} on $X$.

More abstractly, we obtain from $\PDRforms{\bullet}$, via the general nerve/realization construction, an adjunction
\begin{equation} \label{eq:pdr_real_adjunction}
\begin{tikzcd}
  \sSet \rar[bend left]{\PDR(\blank)}[swap, name = U]{} & \opcat{\CDGA} \lar[bend left]{\Realization{\blank}}[swap, name = D]{}
  \ar[from = U, to = D, phantom]{}{\vertdashv}
\end{tikzcd}
\end{equation}
where the functors are explicitly given by
\begin{align*}
  \PDR(X) &\defeq \colim{\Simplex {n} \to X} \PDRforms {n} \\
  \Realization{A} &\defeq \Hom[\CDGA] (A, \PDRforms {\bullet})
\end{align*}
We call $\Realization{A}$ the \emph{realization} of the cdga $A$.

Since $\PDR (X)$ is a simplicial analogue of the de Rham complex, one would expect it to compute the (rational) cohomology of $X$.
This is the content of the following theorem; see \cite[Theorem 10.15]{FelixHalperinThomas01} or
\cite[Theorem 2.2]{BousfieldGugenheim76} for a proof.

\begin{theorem}[Polynomial de Rham theorem] \label{thm:polyDeRham}
  Let $X$ be a simplicial set.
  Integration of polynomial differential forms defines a natural quasi-isomorphism of cochain complexes
  \[ I\colon \PDR(X) \xlongto{\eq} \Cochains{*}(X; \QQ) \]
  from polynomial de Rham forms to singular cochains. Explicitly,
  \[I(\omega)(\sigma) = \int_{\Delta^n} \omega_\sigma,\]
  for $\omega\in \Omega^*(X)$ and $\sigma\colon \Delta^n \to X$.
\end{theorem}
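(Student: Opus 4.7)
The plan is to verify three things in sequence: that $I$ is a natural chain map, that it is a quasi-isomorphism on each representable simplicial set $\Simplex{n}$, and that this case propagates to arbitrary simplicial sets.

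For the chain map property, naturality in $X$ is immediate from functoriality of pullback of forms combined with the definition of $\PDR$ as a colimit indexed by simplices of $X$. That $I$ commutes with differentials is a version of Stokes' theorem: for $\omega$ of cohomological degree $n-1$ and a singular $n$-simplex $\sigma$,
\[I(d\omega)(\sigma) = \int_{\Delta^n} d\omega_\sigma = \int_{\partial\Delta^n}\omega_\sigma = \sum_{i=0}^{n}(-1)^i \int_{\Delta^{n-1}} d_i^*\omega_\sigma = \delta I(\omega)(\sigma),\]
which reduces to the smooth case since $\PDRforms{n}$ embeds into the smooth de Rham complex of $\Delta^n$.

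Next I would settle the representable case $X = \Simplex{n}$. The singular cochain side is quasi-isomorphic to $\QQ$ concentrated in degree $0$, since $\Simplex{n}$ is contractible. For the de Rham side, I would write down a polynomial analogue of the Poincaré-lemma contraction of $\PDRforms{n}$ onto its subcomplex $\QQ$ at a chosen vertex, yielding $\Coho{*}(\PDRforms{n})\cong \QQ$ in degree $0$. Since $I(1)$ is the constant cocycle with value $1$, it represents the generator, and so $I$ is a quasi-isomorphism on representables.

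To pass to arbitrary $X$, I would induct on the skeletal filtration $X^{(0)}\subset X^{(1)}\subset \cdots$. Each inclusion $X^{(n-1)}\hookrightarrow X^{(n)}$ sits in a pushout along $\coprod \partial\Simplex{n} \hookrightarrow \coprod \Simplex{n}$; both $\PDR$ and $\Cochains{*}(\blank;\QQ)$ send this pushout to a pullback, and the resulting Mayer--Vietoris long exact sequences, combined with the five lemma, propagate the quasi-isomorphism up the skeleta from the representable case. Passage to the inverse limit across $n$ then finishes the proof for all of $X$.

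The main obstacle, and what makes the whole argument work, is the \emph{extendability} of $\PDR$: the restriction $\PDR(\Simplex{n}) \to \PDR(\partial\Simplex{n})$ must be surjective. Both the Mayer--Vietoris sequence at each skeletal stage and the Mittag--Leffler condition controlling the inverse limit hinge on this. Extendability is a genuine feature of the polynomial model --- given any compatible family of polynomial forms on the faces of $\Simplex{n}$, one constructs an explicit extension using products of the barycentric coordinates $t_k$ together with the relation $t_0+\cdots+t_n = 1$ to patch the face data into a single polynomial form on the simplex.
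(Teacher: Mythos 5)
Your proposal is correct and follows essentially the same route as the proof the paper points to (the paper gives no argument of its own but cites \cite[Theorem 10.15]{FelixHalperinThomas01} and \cite[Theorem 2.2]{BousfieldGugenheim76}, where the argument is exactly your three steps: Stokes' theorem for the chain-map property, the polynomial Poincar\'e lemma on the representables $\Delta^n$, and skeletal induction via Mayer--Vietoris and passage to the inverse limit, both powered by the extendability of $\Omega^*$). The only small point worth making explicit is that integrals of polynomial forms with rational coefficients over the standard simplex are rational numbers, so that $I$ genuinely takes values in $C^*(X;\QQ)$ rather than merely in real cochains.
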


\begin{remark} \label{remark:pdr}
  The quasi-isomorphism $I$ in the preceding theorem is not a morphism of dg algebras, but \cite[Proposition 3.3]{BousfieldGugenheim76} shows that it extends to an $A_\infty$-quasi-isomorphism. In particular, this implies that the induced map in cohomology,
  \[\Coho{*}(I)\colon \Coho{*}(\Omega^*(X)) \to \Coho{*}(X;\QQ),\]
  is an isomorphisms of algebras. By \Cref{thm:minimal_Pinfty}, this also implies that $\PDR(X)$ is quasi-isomorphic to $\Cochains{*}(X;\QQ)$ as an associative dg-algebra. For an alternative proof of this statement, see \cite[Corollary 10.10]{FelixHalperinThomas01}.
\end{remark}

\begin{remark} \label{remark:dupont}
In his proof of the de Rham theorem, Dupont \cite[\S2]{Dupont78} constructs a contraction
\begin{equation} \label{eq:dupont}
  \begin{tikzcd}
     \PDR(X) \ar[loop, out = 160, in = 200, distance = 2em, "s"'] \rar[shift left]{I} & \Cochains{*}(X;\QQ) \lar[shift left]{E}
  \end{tikzcd}
\end{equation}
that is \emph{natural} in $X$. As observed by Cheng--Getzler \cite{ChengGetzler08}, an application of the homotopy transfer theorem for $\Cinfty$-algebras then shows that the cochain complex $\Cochains{*}(X;\QQ)$ carries a natural $\Cinfty$-algebra structure, which is naturally $\Cinfty$-quasi-isomorphic to $\PDR(X)$. The existence of such a $\Cinfty$-algebra structure on $\Cochains{*}(X;\QQ)$ has also been noticed by Sullivan using different methods, see the appendix of \cite{TradlerZeinalian07}.
\end{remark}

Next, we review the definition of Sullivan algebras. These play the role of CW-complexes in the category of cdgas: they are cofibrant and every connected cdga can be replaced by a Sullivan algebra up to quasi-isomorphism.

\begin{definition}
  A \emph{Sullivan algebra} is a cdga of the form $(\SA V, d)$, where $V$ is a graded vector space concentrated in positive cohomological degrees and the differential $d$ satisfies the following \emph{nilpotence condition}: there exists an exhaustive filtration
  \[ 0 = F_{-1} V \subseteq F_0 V \subseteq F_1 V \subseteq \dots \subseteq V \]
  such that $d(F_p V) \subseteq \SA (F_{p-1} V)$ for all $p \ge 0$.
  
  A Sullivan algebra $(\SA V, d)$ is called \emph{minimal} if $d(V) \subseteq \SA[\ge 2] V$.
  It is called \emph{of finite type} if $V$ (or, equivalently, $\SA V$) is of finite type.
\end{definition}

Every quasi-isomorphism between minimal Sullivan algebras is an isomorphism (see \cite[Theorem 14.11]{FelixHalperinThomas01}). This justifies the terminology ``minimal''. The reader may compare with minimal Kan complexes in simplicial homotopy theory (see e.g.\ \cite[Chapter I.10]{GoerssJardine09}) or minimal resolutions in homological algebra (see e.g.\ \cite[Proposition 1.1.2]{avramov98}).

\begin{remark}
  Sullivan algebras are cofibrant.
  In particular this implies that, for any quasi-isomorphism $q \colon A \to B$ of cdgas and any map $f \colon (\SA V, d) \to B$, there exists a lift $\tilde f$ such that the diagram
  \[
  \begin{tikzcd}
    & A \dar{\eq}[swap]{q} \\
    (\SA V, d) \rar{f} \urar[dashed]{\tilde f} & B
  \end{tikzcd}
  \]
  commutes up to homotopy, and this lift is unique up to homotopy (see e.g.\ \cite[Proposition 12.9]{FelixHalperinThomas01}).
\end{remark}

The following theorem states that (connected) spaces can always be modeled by minimal Sullivan algebras.
A proof can be found, for example, in \cite[Corollary, p.191]{FelixHalperinThomas01} or \cite[Proposition 7.7]{BousfieldGugenheim76}.

\begin{theorem}[Existence of minimal models] \label{thm:minimal_Sullivan_model}
  Let $X$ be a connected space.
  Then there exists a unique, up to non-canonical isomorphism, minimal Sullivan algebra $\mathcal M_X = (\SA V, d)$ that is quasi-isomorphic to $\PDR(X)$.
\end{theorem}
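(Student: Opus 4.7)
My plan is to build $\mathcal{M}_X$ and a cdga quasi-isomorphism $\phi \colon \mathcal{M}_X \to \PDR(X)$ by a degree-by-degree inductive construction. Start with $M_0 = \QQ$, and assume inductively that we have a minimal Sullivan algebra $M_n = (\SA V^{\leq n}, d)$ together with a cdga morphism $\phi_n \colon M_n \to \PDR(X)$ inducing an isomorphism on $H^{k}$ for $k \leq n$ and an injection on $H^{n+1}$. To pass to stage $n+1$ I would (i) choose cocycles in $\PDR(X)^{n+1}$ whose classes span the cokernel of $H^{n+1}(\phi_n)$, adjoin a closed generator in $V^{n+1}$ for each, and extend $\phi_n$ by sending the new generators to the chosen cocycles; (ii) choose cocycles in $M_n^{n+2}$ whose classes span the kernel of $H^{n+2}(\phi_n)$, write their $\phi_n$-images as coboundaries $d\eta_i$ in $\PDR(X)$, and adjoin generators $x_i \in V^{n+1}$ with $dx_i$ equal to the chosen cocycle and $\phi_{n+1}(x_i) = \eta_i$. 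This yields a minimal Sullivan algebra $M_{n+1}$ for which $H^{\leq n+1}(\phi_{n+1})$ is an isomorphism and $H^{n+2}(\phi_{n+1})$ is injective. Taking the colimit over $n$ produces $\mathcal{M}_X$ and the desired quasi-isomorphism, and the filtration by stage of introduction $F_p V \defeq V^{\leq p}$ provides the nilpotence data required by the Sullivan condition.

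\textbf{Uniqueness.} Given two minimal Sullivan models $\phi \colon \mathcal{M}_X \to \PDR(X)$ and $\phi' \colon \mathcal{M}_X' \to \PDR(X)$, the lifting property for Sullivan algebras recorded in the remark preceding the theorem, applied to the quasi-isomorphism $\phi'$ and the map $\phi$, produces a cdga morphism $\psi \colon \mathcal{M}_X \to \mathcal{M}_X'$ with $\phi' \circ \psi$ homotopic to $\phi$. By two-out-of-three $\psi$ is a quasi-isomorphism, and then by \cite[Theorem~14.11]{FelixHalperinThomas01} --- quasi-isomorphisms between minimal Sullivan algebras are isomorphisms --- $\psi$ itself is an isomorphism. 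Since the homotopy lift $\psi$ is not unique, the resulting isomorphism is only determined up to homotopy, which accounts for the non-canonical qualifier in the statement.

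\textbf{Main obstacle.} The delicate point is preserving the minimality condition $d V \subseteq \SA^{\geq 2} V$ in step (ii): the $M_n$-cocycle chosen as $dx_i$ must have no linear part in $V^{\leq n}$. A cocycle with a nonzero linear component would in particular represent a class in the image of $H^{n+2}(\phi_n)$ that is detected linearly, and one has to use the inductive hypothesis --- that $H^{\leq n}(\phi_n)$ is already an isomorphism and that we have injectivity in degree $n+1$ --- to replace any given cocycle by a cohomologous one lying in $\SA^{\geq 2} V^{\leq n}$. For simply connected $X$ the degree bookkeeping ($V^1 = 0$) makes this automatic from degree $2$ onwards; in the general connected case one has to be more careful at the bottom of the induction to ensure the nilpotence filtration can be chosen as advertised.
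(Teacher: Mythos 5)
The paper does not prove this statement itself; it defers to \cite[Corollary, p.191]{FelixHalperinThomas01} and \cite[Proposition 7.7]{BousfieldGugenheim76}, and your strategy is exactly the standard one from those sources. Your uniqueness argument is correct as written (up to the routine remark that a zig-zag of quasi-isomorphisms between a Sullivan algebra and $\PDR(X)$ can be straightened to a direct quasi-isomorphism using the same lifting property). Your existence argument is also correct and complete for \emph{simply connected} $X$. But the theorem is stated for \emph{connected} $X$, and there the induction as you have set it up does not close. At $n=0$ step (ii) is vacuous, so every degree-one generator is closed and $M_1 = (\SA V^1, 0)$ with $V^1 \cong \Coho{1}(X;\QQ)$; the cup product map $\SA[2] V^1 = \Lambda^2 \Coho{1}(X;\QQ) \to \Coho{2}(X;\QQ)$ is typically not injective, and your construction never returns to degree $2$ to repair this. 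Already $X = S^1 \vee S^1$ defeats it: the class $ab$ survives in $\Coho{2}(M_n)$ for every $n$ while $\Coho{2}(X;\QQ)=0$; killing it requires a new degree-one generator $c$ with $dc=ab$, which creates $[ac]$ and $[bc]$, and so on --- the degree-one part of the minimal model of a wedge of circles is infinite dimensional and must be built by an infinite \emph{inner} induction. The same defect recurs at every later stage: once $V^1 \neq 0$, adjoining generators in degree $n+1$ creates new elements of degree $n+2$ (products with degree-one generators), hence potentially new kernel classes for $\Coho{n+2}(\phi_{n+1})$, so the hypothesis that $\Coho{n+2}$ is injective does not propagate. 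The cited references fix this by running a countable sub-induction within each degree and taking the nilpotence filtration $F_pV$ to record the sub-stage of introduction rather than the cohomological degree (which is why, for non-nilpotent spaces, the degree filtration genuinely fails to witness the Sullivan condition).

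By contrast, the point you single out as the main obstacle is not one: a cocycle in $M_n^{n+2} = (\SA V^{\leq n})^{n+2}$ automatically lies in $\SA[\ge 2] V^{\leq n}$, simply because $V^{\leq n}$ has no elements of degree $n+2$, so minimality of $M_{n+1}$ requires no argument at all. The real delicacy in the merely connected case is the one above --- making the induction terminate --- not preserving minimality.
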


\begin{definition}
  The Sullivan algebra $\mathcal M_X$ of \Cref{thm:minimal_Sullivan_model} is called the \emph{minimal Sullivan model} of $X$.
\end{definition}

\begin{remark}
If $X$ is nilpotent and of finite $\QQ$-type, then the minimal model $\mathcal M_X$ is of finite type, see \cite[Theorem 10.1]{BousfieldGugenheim76}.
\end{remark}

The next theorem states that the cohomology of the realization of a finite type Sullivan algebra agrees with the cohomology of the algebra. See \cite[Theorem 8.1]{Sullivan77}, \cite[Theorem 17.10]{FelixHalperinThomas01}, or \cite[Theorem 10.1]{BousfieldGugenheim76}.

\begin{theorem}[Cohomology of the spatial realization]
  Let $(\SA V, d)$ be a Sullivan algebra of finite type.
  Then the unit $(\SA V, d) \to \PDR \Realization{(\SA V, d)}$ of the adjunction \eqref{eq:pdr_real_adjunction} is a quasi-isomorphism.
\end{theorem}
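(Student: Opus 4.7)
My plan is to proceed by induction along the Sullivan filtration $F_{-1}V \subseteq F_0 V \subseteq \dots$ of $(\SA V, d)$, reducing first to the case where each step is an \emph{elementary Hirsch extension} $(B,d) \to (B \otimes \SA(x), d)$ adjoining a single generator $x$ with $dx \in B$. Since $V$ lives in positive cohomological degrees and is of finite type, such a refinement exists and the generators can be ordered by degree.

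The base case is to verify the result for $(\SA(x), 0)$ with $|x| = n \ge 1$. Here the realization $\Realization{(\SA(x), 0)}$ is a simplicial model for the Eilenberg--MacLane space $K(\QQ, n)$, and I would match the polynomial de Rham computation of $\Coho{*}(K(\QQ, n); \QQ)$ against the closed form $\SA(x)$. Concretely, using \cref{thm:polyDeRham} to replace $\PDR$-cohomology by singular cohomology with rational coefficients, together with the standard calculation $\Coho{*}(K(\QQ, n); \QQ) = \SA(x)$, one reads off that the unit $(\SA(x), 0) \to \PDR \Realization{(\SA(x), 0)}$ is a quasi-isomorphism by checking that the fundamental class of the generator is hit.

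For the inductive step, I would exploit the fact that the contravariant realization functor $\Realization{-}$ sends a Hirsch extension $(B,d) \hookrightarrow (B \otimes \SA(x), d)$ to a principal Kan fibration $\Realization{(B \otimes \SA(x), d)} \to \Realization{(B, d)}$ with fiber $\Realization{(\SA(x), 0)} \simeq K(\QQ, n)$, using the adjunction \eqref{eq:pdr_real_adjunction} together with the observation that the inclusion is a cofibration of cdgas. Applying $\PDR$ and combining with the inductive hypothesis on $(B,d)$ and the base case on the fiber, one obtains a morphism between the Koszul--Sullivan spectral sequence of the algebraic extension and the Serre spectral sequence of the topological fibration, which agrees on fibers and base. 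A five-lemma argument on the $E_2$-pages then promotes the unit for $(B \otimes \SA(x), d)$ to a quasi-isomorphism.

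Finally, I would pass from finite Hirsch extensions to the general Sullivan algebra by taking colimits along the filtration $F_p V$, using that both $\PDR$ and cohomology commute with these sequential colimits under the finite type hypothesis (so that each cohomological degree stabilizes at a finite stage). The main obstacle I anticipate is the bookkeeping in the inductive step: verifying that the realization of a Hirsch extension really is a principal $K(\QQ, n)$-fibration (this requires identifying $\Realization{(\SA(x), 0)}$ as a simplicial abelian group acting freely on the total space) and that the spectral sequence comparison is genuinely compatible at $E_2$. The finite type hypothesis is essential throughout, both for the degreewise stabilization under colimits and to ensure that $\Coho{*}(K(\QQ,n);\QQ)$ is well-behaved enough for the $E_2$-comparison to apply.
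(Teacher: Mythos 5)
The paper does not prove this theorem itself --- it defers to \cite[Theorem 8.1]{Sullivan77}, \cite[Theorem 17.10]{FelixHalperinThomas01} and \cite[Theorem 10.1]{BousfieldGugenheim76} --- and your outline is exactly the argument given in those sources: base case $K(\QQ,n)$ via the polynomial de Rham theorem, Hirsch extensions realizing to principal $K(\QQ,n)$-fibrations, a Serre/Koszul--Sullivan spectral sequence comparison, and a degreewise stabilization argument, with finite type needed already in the base case to avoid the double-dual problem for the realization of $\SA V^n$. The only step to phrase more carefully is the last one: since $\Realization{\blank}$ is contravariant, the union over the filtration realizes to an \emph{inverse} limit of a tower of fibrations, so one concludes from the increasing connectivity of the fibers along the tower rather than by literally commuting $\PDR$ with a colimit.
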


\begin{exercise} \label{ex:rat_eq_pdr}
  Using the facts stated above, show that two nilpotent spaces $X$ and $Y$ of finite $\QQ$-type are rationally equivalent if and only if $\PDR (X)$ and $\PDR (Y)$ are quasi-isomorphic as cdgas.
\end{exercise}

Granted this, the proof of \Cref{thm:C-infinity model} is straightforward.

\begin{proof}[Proof of \Cref{thm:C-infinity model}]
  By \cref{ex:rat_eq_pdr} two simply connected spaces $X$ and $Y$ are rationally equivalent if and only if $\PDR (X)$ and $\PDR (Y)$ are quasi-isomorphic as cdgas.
  By \cref{thm:minimal_Pinfty,thm:polyDeRham} the latter condition is equivalent to $\Coho{*} (X; \QQ)$ and $\Coho{*} (Y; \QQ)$ being $\Cinfty$-isomorphic.
\end{proof}

\subsection{Quillen's rational homotopy theory}

Quillen \cite{Quillen69} defined a functor
$$\lambda\colon \Top_1 \longto \DGL_{\geq 1}$$
from the category of simply connected pointed topological spaces to the category of positively graded dg Lie algebras over $\QQ$, and showed that it induces an equivalence of categories after formally inverting the rational equivalences in $\Top_1$ and the quasi-isomorphisms in $\DGL_{\geq 1}$. In particular, $X$ and $Y$ are rationally equivalent if and only if $\lambda X$ and $\lambda Y$ are quasi-isomorphic.
Another feature of Quillen's functor is the existence of a natural isomorphism
$$\Ho{*}{} (\lambda X) \cong \htpygrp{*+1}(X)\tensor \QQ.$$
Granted these facts, the proof of \Cref{thm:L-infinity model} is a simple application of the homotopy transfer theorem for $L_\infty$-algebras.

\begin{proof}[Proof of \Cref{thm:L-infinity model}]
  Applied to $\lambda X$, the homotopy transfer theorem yields an $L_\infty$-algebra structure on $\Ho{*}{}(\lambda X) \cong \htpygrp{*+1}(X)\tensor \QQ$.
  Next, by Quillen's theory, the spaces $X$ and $Y$ are rationally equivalent if and only if $\lambda X$ and $\lambda Y$ are quasi-isomorphic, which happens if and only if $\htpygrp{*+1}(X)\tensor \QQ$ and $\htpygrp{*+1}(Y)\tensor \QQ$ are $\Linfty$-isomorphic by \cref{thm:minimal_Pinfty}.
\end{proof}

Counterparts of Sullivan's minimal models for dg Lie algebras were developed by Baues--Lemaire \cite{BauesLemaire77} and Neisendorfer \cite{Neisendorfer78}. We will stick to the simply connected case here for simplicity, but we should mention that generalizations to non-nilpotent spaces have been developed recently by Buijs--F\'elix--Murillo--Tanr\'e \cite{BFMT20}.

A dg Lie algebra is called \emph{minimal} if it is of the form $(\freeLie V, \delta)$, where $\freeLie V$ is the free Lie algebra on a positively graded vector space $V$ and $\delta$ is decomposable in the sense that
$$\delta(V)\subseteq \freeLie[\geq 2] V.$$
Every positively graded dg Lie algebra admits a unique, up to isomorphism, minimal model, see e.g.\ \cite[Theorem~2.3]{BauesLemaire77}, \cite[Proposition~5.6(c)]{Neisendorfer78} or \cite[Theorem~22.13]{FelixHalperinThomas01}. Also see \cite[Theorem 3.19]{BFMT20} for a generalization to non-negatively graded complete dg Lie algebras. Applied to Quillen's $\lambda X$, this yields the following result.

\begin{theorem} \label{thm:quillen model}
For each simply connected space $X$, there is a unique, up to non-canonical isomorphism, minimal dg Lie algebra $\LL_X$ such that
$$\LL_X \simeq \lambda X.$$
\end{theorem}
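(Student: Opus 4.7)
The plan is to apply the general minimal model theorem for positively graded dg Lie algebras---due to Baues--Lemaire and Neisendorfer, and cited in the paragraph immediately preceding the statement---directly to Quillen's functor $\lambda$. Since $X$ is simply connected, $\lambda X$ lies in $\DGL_{\geq 1}$, so the hypotheses of that theorem are satisfied and the work reduces to a formal specialization.

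First, I would invoke the existence half of the cited minimal model theorem to produce a minimal dg Lie algebra $(\freeLie V, \delta)$ together with a quasi-isomorphism $(\freeLie V, \delta) \xlongto{\eq} \lambda X$. Setting $\LL_X \defeq (\freeLie V, \delta)$ then gives the existence claim of the theorem.

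For uniqueness, I would take two minimal dg Lie algebras $\LL$ and $\LL'$ both quasi-isomorphic to $\lambda X$ and consider the zig-zag $\LL \xlongto{\eq} \lambda X \xlongfrom{\eq} \LL'$. Using the lifting property of free Lie algebras on positively graded vector spaces against surjective quasi-isomorphisms---the cofibrancy half of the minimal-model package---this zig-zag can be rectified to a direct quasi-isomorphism $\LL \to \LL'$. The second standard ingredient, that every quasi-isomorphism between minimal dg Lie algebras is automatically an isomorphism (the Lie analogue of the corresponding statement for Sullivan algebras), then upgrades this to the desired isomorphism $\LL \cong \LL'$. The non-canonicity is exactly the freedom in choosing the lift used in the rectification step.

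I do not expect a substantive obstacle here: once the minimal model theorem for positively graded dg Lie algebras is granted, the only thing to verify is that Quillen's $\lambda X$ satisfies its hypotheses, which is built into Quillen's construction for simply connected $X$. The ``hardest'' bookkeeping step is merely tracking that the resulting isomorphism of minimal models is only defined up to choice of lift, which accounts for the ``non-canonical'' qualifier in the statement.
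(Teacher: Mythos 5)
Your proposal is correct and follows exactly the paper's route: the paper likewise deduces the theorem by applying the general existence-and-uniqueness theorem for minimal models of positively graded dg Lie algebras (Baues--Lemaire, Neisendorfer) to $\lambda X$, which lies in $\DGL_{\geq 1}$ because $X$ is simply connected. The extra detail you supply for uniqueness (rectifying the zig-zag via the lifting property and the fact that quasi-isomorphisms between minimal dg Lie algebras are isomorphisms) is the standard content of the cited theorem, which the paper simply quotes.
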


\begin{definition}
We will call the minimal dg Lie algebra $\LL_X$ in the above theorem the \emph{minimal Quillen model} of $X$.
\end{definition}

\subsection{Nothing new under the sun: \texorpdfstring{$\infty$}{∞}-algebras vs.\ minimal models} \label{sec:nothing new}

As hinted in the introduction, the homotopy transfer and minimality theorems were known long before $\infty$-algebras became part of mainstream mathematics, but in a different language.
Finite type nilpotent $\Linfty$-algebras are essentially the same thing as Sullivan algebras of finite type.
More formally we have the following statement, a proof of which can be found e.g.\ in \cite[Theorem 2.3]{Berglund15}.
Recall, from \cref{sec:minimal}, that an $\Linfty$-algebra is called minimal if its differential is trivial. We say that an $\Linfty$-algebra $L$ is \emph{nilpotent} if its lower central series terminates degreewise, see \cref{def:nilpotent}.

\begin{proposition} \label{prop:L-infinity dictionary}
The functor $\coCE * (\blank)$ restricts to an equivalence of categories
$$
\left\{ \parbox{4cm}{\center Nilpotent $L_\infty$-algebras $L$ of finite type \endcenter} \right\} \longrightarrow \left\{ \parbox{4cm}{\center Sullivan algebras $(\Lambda V,d)$ of finite type \endcenter} \right\},
$$
where the morphisms are $\infty$-morphisms of $\Linfty$-algebras and morphisms of cdgas, respectively. Moreover, a nilpotent $\Linfty$-algebra $L$ is minimal if and only if the Sullivan algebra $\coCE * (L)$ is minimal.
\end{proposition}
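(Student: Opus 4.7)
The plan is to set up the correspondence by identifying the underlying data on both sides via finite-type linear duality, then matching up the nilpotence and minimality conditions.

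For the object-level bijection, I would begin by observing that for a nilpotent $\Linfty$-algebra $L$ of finite type concentrated in positive homological degrees (as required by the simply connected setting), the shifted complex $\shift L$ lies in homological degrees $\geq 2$, so $V \defeq \dual{(\shift L)}$ is of finite type and concentrated in positive cohomological degrees. Finite-type duality identifies the underlying graded algebra of $\coCE{*}(L) = \dual{\CE{*}(L)}$ with $\SA V$, equipped with the differential dual to $d + b$. Conversely, from a Sullivan algebra $(\SA V, d)$ of finite type, dualizing and desuspending $V$ produces a chain complex $L$ together with a coderivation on $\coSA(\shift L)$ dual to $d$, which by \Cref{lemma:Linfty_equiv_def} encodes an $\Linfty$-structure on $L$. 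These two constructions are mutually inverse up to natural isomorphism by finite-type biduality.

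The main obstacle is translating between the two nilpotence conditions. The plan is to use the lower central series of $L$, whose $(p+1)$-st term is generated by applying the operations $l_n$ to tuples in which at least one input lies in the $p$-th term; the nilpotence hypothesis says this filtration terminates degreewise, and its annihilator yields an exhaustive increasing filtration $F_p V$ of $V$. The key check is that the $\Linfty$ relations, together with how the lower central series is built from iterated brackets, dualize precisely to the containment $d(F_p V) \subseteq \SA(F_{p-1} V)$; this is essentially bookkeeping once the filtrations are aligned. The reverse implication proceeds analogously: a Sullivan filtration on $V$ dualizes back to a bracket-closure filtration on $L$ that witnesses its nilpotence.

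For morphisms, the bijection is immediate from the definitions: an $\infty$-morphism $L \inftyto L'$ is by definition a coalgebra morphism $\CE{*}(L) \to \CE{*}(L')$, and under finite-type linear duality this transposes to a cdga morphism between the $\coCE{*}$'s, naturally and compatibly with composition. Finally, for minimality, by \Cref{lemma:Linfty_equiv_def} the internal differential $d_L$ on $L$ coincides with the weight-$1$ component of the coderivation on $\coSA(\shift L)$, which dualizes to the component of the differential on $\SA V$ that lands back in $V \subseteq \SA V$. Hence $d_L = 0$ if and only if $d(V) \subseteq \SA^{\geq 2} V$, matching the two notions of minimality.
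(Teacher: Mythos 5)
The paper does not actually prove this proposition; it defers to \cite[Theorem 2.3]{Berglund15}. Your sketch follows the same strategy as that reference (degreewise linear duality between $\coSA(\shift L)$ and $\SA V$ for $V=\dual{(\shift L)}$, matching the lower central series of $L$ against the Sullivan filtration of $V$, and reading off minimality from the weight-one component of the differential), and the morphism-level and minimality arguments are correct as stated, modulo the remark that the equivalence is contravariant, since dualizing a coalgebra morphism reverses arrows.

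There is, however, one concrete slip in the setup: you restrict to $L$ concentrated in \emph{positive} homological degrees, so that $\shift L$ sits in degrees $\geq 2$ and $V$ in cohomological degrees $\geq 2$. That only produces \emph{simply connected} Sullivan algebras, whereas the proposition is about all Sullivan algebras of finite type, including those with $V^1 \neq 0$. The correct degree range is $L$ concentrated in degrees $\geq 0$, so that $V = \dual{(\shift L)}$ is concentrated in degrees $\geq 1$. This matters because with your restriction the nilpotence hypothesis is vacuous on both sides (any positively graded $\Linfty$-algebra is nilpotent, since an iterated bracket of $k$ elements of degree $\geq 1$ has degree $\geq k$), which would render your second paragraph moot; it is precisely the degree-zero part $L_0$, dual to $V^1$, where degreewise termination of the lower central series is a genuine condition and where it translates into the existence of the filtration $F_p V$ with $d(F_pV) \subseteq \SA(F_{p-1}V)$. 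Once the degree range is corrected, your annihilator argument does go through: for $\phi$ annihilating $\Gamma^{p+1}L$ and any input of $l_n$ lying in $\Gamma^{p}L$, the output lies in $\Gamma^{p+n-1}L \subseteq \Gamma^{p+1}L$ since $n \geq 2$, which is exactly the containment you need, and conversely the orthogonal complements of a Sullivan filtration dominate the lower central series and force its degreewise termination using finite type. So the proof is salvageable essentially as written, but the degree bookkeeping needs to be fixed for the statement you are actually proving.
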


\begin{remark}
  Under this dictionary, the minimality theorem for $L_\infty$-algebras (\cref{thm:minimal_Pinfty}) is nothing but the classical result that a Sullivan algebra has a minimal model, cf.~\cite[Theorem 2.2]{Sullivan77} or \cite[Theorem 14.9]{FelixHalperinThomas01}.
\end{remark}

A key feature of minimal Sullivan models is that the rational homotopy groups can be read off easily.
This is the content of the following statement, a proof of which can be found in, for example, \cite[Theorem 10.1(i)]{Sullivan77} or \cite[Theorem 15.11]{FelixHalperinThomas01}.
For a proof using nerves of $L_\infty$-algebras, see \cref{remark:homotopy groups}.

\begin{proposition} \label{prop:homotopy groups from sullivan model}
  If $X$ is a simply connected space of finite $\QQ$-type with minimal Sullivan model
  $$\mathcal M_X = (\SA V, d) \eq \PDR(X),$$
  then $V^k$ is dual to $\htpygrp{k}(X) \tensor \QQ$ for every $k$.
\end{proposition}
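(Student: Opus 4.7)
The plan is to invoke the dictionary between minimal Sullivan algebras and minimal nilpotent $L_\infty$-algebras (\cref{prop:L-infinity dictionary}), and identify the resulting $L_\infty$-algebra with the $L_\infty$-structure on $\pi_{*+1}(X)\otimes\QQ$ from \cref{thm:L-infinity model}.

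First, by \cref{prop:L-infinity dictionary} we may write $\mathcal M_X \cong \coCE{*}(L)$ for a minimal nilpotent $L_\infty$-algebra $L$ of finite type, unique up to $\infty$-isomorphism. Unwinding the definition $\coCE{*}(L) = \dual{(\coSA(\shift L), d+b)}$, the generators of $\mathcal M_X = (\SA V, d)$ satisfy $V \cong \dual{\shift L}$ as graded vector spaces; converting from homological to cohomological grading gives $V^k \cong \dual{L_{k-1}}$. Thus the claim reduces to the graded-vector-space identification $L_{k-1} \cong \pi_k(X)\otimes\QQ$.

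To establish this, let $L_X \defeq \pi_{*+1}(X)\otimes\QQ$ carry the minimal $L_\infty$-structure from \cref{thm:L-infinity model}, obtained by transferring the dg Lie structure on Quillen's minimal model $\mathbb L_X$ (cf.\ \cref{thm:quillen model}) along the homotopy transfer theorem. The HTT supplies an $\infty$-quasi-isomorphism $L_X \longinftyto \mathbb L_X$, which upon applying $\coCE{*}$ becomes a quasi-isomorphism $\coCE{*}(\mathbb L_X) \eq \coCE{*}(L_X)$ of cdgas (finite-typeness of $\mathbb L_X$ and $L_X$ makes $\CE_*$ degreewise finite dimensional, so dualization preserves this quasi-iso). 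Combined with the classical compatibility $\coCE{*}(\mathbb L_X) \eq \PDR(X)$ between Quillen's and Sullivan's models (see e.g.\ \cite[Chapter 22]{FelixHalperinThomas01}), this gives $\coCE{*}(L_X) \eq \PDR(X)$. Hence $\coCE{*}(L)$ and $\coCE{*}(L_X)$ are both minimal Sullivan models of $X$ --- minimality on the $L_X$ side coming from the ``moreover'' clause of \cref{prop:L-infinity dictionary} --- so by uniqueness of minimal Sullivan models (\cref{thm:minimal_Sullivan_model}) they are isomorphic as cdgas. The equivalence of categories in \cref{prop:L-infinity dictionary} then forces $L \cong L_X$ as $L_\infty$-algebras, so $L_{k-1} \cong (L_X)_{k-1} = \pi_k(X)\otimes\QQ$, as required.

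The main obstacle is the classical input $\coCE{*}(\mathbb L_X) \eq \PDR(X)$, expressing the compatibility between Quillen's and Sullivan's functors; this is not established in the excerpt but is standard. Everything else is a formal consequence of the dictionary, the homotopy transfer theorem, and uniqueness of minimal Sullivan models. An alternative, equally valid route would be to avoid this compatibility altogether and read $\pi_k$ directly off $L$ using the Maurer--Cartan nerve of an $L_\infty$-algebra, as alluded to in \cref{remark:homotopy groups}.
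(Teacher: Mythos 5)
Your reduction of the statement to the graded identification $L_{k-1}\cong \pi_k(X)\otimes\QQ$ via \cref{prop:L-infinity dictionary} is fine, but the way you establish that identification hides the entire difficulty in the input you call ``standard''. A cdga quasi-isomorphism $\coCE{*}(\LL_X)\simeq\PDR(X)$ for \emph{Quillen's} minimal model $\LL_X$ is the Quillen--Sullivan comparison theorem: together with Quillen's isomorphism $\Ho{*}(\LL_X)\cong\pi_{*+1}(X)\otimes\QQ$ it yields the proposition instantly (exactly by your minimality-plus-uniqueness argument), and conversely the proposition plus uniqueness of minimal Sullivan models recovers it. So it is not an auxiliary classical fact but a statement at least as deep as the one being proved. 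The paper flags precisely this point in the remark after the second proof of \cref{thm:L-infinity model}: the assertion that the $\Linfty$-structure read off the Sullivan model agrees with the one transferred from $\lambda X$ is equivalent to the Baues--Lemaire conjecture, proved by Majewski. Your argument ``deduces'' that theorem ($L\cong L_X$ as $\Linfty$-algebras) as a byproduct, which should be a warning sign. There is also a circularity hazard in the intended citation: in \cite{FelixHalperinThomas01} the Lie models of Chapters 22--24 are built from the Sullivan side, and the identification of their homology with $\pi_{*+1}(X)\otimes\QQ$ is \emph{derived from} Theorem 15.11, i.e.\ from the very proposition at hand; only Quillen's original construction provides that identification independently, and for $\lambda X$ the compatibility $\coCE{*}(\lambda X)\simeq\PDR(X)$ is not established there.

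The paper itself proves nothing here: it cites the classical direct arguments (\cite[Theorem 10.1(i)]{Sullivan77}, \cite[Theorem 15.11]{FelixHalperinThomas01}), which induct over the Postnikov tower and the principal refinement of the Sullivan model and never touch Quillen's theory, and it offers the nerve argument of \cref{remark:homotopy groups} as an alternative: one combines $\MC_\bullet(L)\cong\Realization{(\SA V,d)}\simeq \BKQcompl X$ with $\pi_{k+1}(\MC_\bullet(L),0)\cong\Ho{k}(L)=L_k$ from \cref{thm:homotopy groups of nerve}, using that $L$ is minimal. Your closing sentence correctly identifies this second route; if you want a proof self-contained within these notes, that is the one to carry out, since it avoids the Quillen--Sullivan comparison entirely.
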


The preceding two propositions together imply that the differential $d$ of the minimal Sullivan model $\mathcal M_X$ corresponds to a minimal nilpotent $\Linfty$-algebra structure on $L_X \defeq \htpygrp{* + 1}(X) \tensor \QQ$ such that $\coCE * (L_X) \iso \mathcal M_X$.
We can use this to give an alternative proof of \cref{thm:L-infinity model}.

\begin{proof}[Second proof of \Cref{thm:L-infinity model}]
  By \cref{thm:minimal_Sullivan_model,ex:rat_eq_pdr}, two simply connected spaces $X$ and $Y$ of finite $\QQ$-type are rationally equivalent if and only if their minimal Sullivan models $\mathcal M_X$ and $\mathcal M_Y$ are isomorphic.
  By the preceding discussion this is equivalent to $\htpygrp{* + 1}(X) \tensor \QQ$ and $\htpygrp{* + 1}(Y) \tensor \QQ$ being $\Linfty$-isomorphic.
\end{proof}

\begin{remark}
It is not a priori clear that the
$\Linfty$-structure on $\htpygrp{*+1}(X)\tensor \QQ$ obtained from the minimal Sullivan model via the equivalence of categories in Proposition \ref{prop:L-infinity dictionary} is $\Linfty$-isomorphic to the one obtained from Quillen's dg Lie algebra $\lambda X$ via the homotopy transfer theorem. The statement that they are is equivalent to a conjecture formulated by Baues--Lemaire \cite[Conjecture 3.5]{BauesLemaire77}, proved by Majewski \cite[Theorem 4.90]{Majewski00}.
See also \cite{Berglund24} for a short proof.
\end{remark}

The following analog of \cref{prop:L-infinity dictionary} is a straightforward consequence of \cref{lemma:Cinfty_equiv_def}. We say that a non-unital $\Cinfty$-algebra is \emph{simply connected} if it is concentrated in cohomological degrees $\geq 2$.

\begin{proposition} \label{prop:C-infinity dictionary}
The functor $\coHarr{*}(\blank)$ restricts to an equivalence of categories
$$
\left\{ \parbox{4cm}{\center Finite type, simply connected $\Cinfty$-algebras $A$ \endcenter} \right\} \longrightarrow \left\{ \parbox{4cm}{\center Finite type dg Lie algebras $(\LL V,\delta)$ with $V=V_{\geq 1}$ \endcenter} \right\},
$$
where the morphisms are $\infty$-morphisms of $\Cinfty$-algebras and morphisms of dg Lie algebras, respectively. Moreover, a $\Cinfty$-algebra $A$ is minimal if and only if the dg Lie algebra $\coHarr * (A)$ is minimal.
\end{proposition}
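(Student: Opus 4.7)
The plan is to apply \cref{lemma:Cinfty_equiv_def}, which encodes a $\Cinfty$-structure on $A$ as a weight-decreasing coderivation $b$ of degree $-1$ on $\cofreeLie(\shift A)$ with $(d+b)^2 = 0$, and then dualize the whole picture. First I would observe that when $A$ is simply connected, i.e.\ $A^i = 0$ for $i \leq 1$, the shift $\shift A$ is concentrated in cohomological degrees $\geq 1$, so the weight-$n$ summand $\Lie(n) \otimes_{\Symm n} (\shift A)^{\otimes n}$ of $\cofreeLie(\shift A)$ lives in cohomological degrees $\geq n$. Combined with the finite-type hypothesis on $A$, this shows that each cohomological degree of $\cofreeLie(\shift A)$ is a finite direct sum of finite-dimensional pieces, so linear duality is a perfect involution and preserves the weight grading.

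Next I would identify $\dual{\cofreeLie(\shift A)}$ with the free Lie algebra $\freeLie(V)$ on $V \defeq \dual{\shift A}$, which is of finite type and concentrated in positive homological degrees. This uses that in characteristic zero invariants equal coinvariants for finite group actions, combined with the (sign-twisted) self-duality of $\Lie(n)$ as a $\Symm n$-representation; no completion issues arise thanks to the finiteness observed above. Under this identification the cofree Lie coalgebra structure on $\cofreeLie(\shift A)$ dualizes to the free Lie algebra structure on $\freeLie(V)$; a square-zero coderivation $d + b$ dualizes to a square-zero derivation $\delta$ on $\freeLie(V)$; and the splitting of $\delta$ into its weight-preserving linear part $V \to V$ and its strictly weight-increasing higher parts corresponds to the splitting of $d + b$ into $d$ and $b$. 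For morphisms, an $\infty$-morphism $f \colon A \inftyto A'$ unpacks, via the remark following \cref{def:P bar}, to a morphism of dg Lie coalgebras $\Harr * (A) \to \Harr * (A')$, which by cofreeness is determined by its composition with the projection to $\shift A'$; dually, a morphism $\freeLie(V') \to \freeLie(V)$ of free dg Lie algebras is determined by its restriction to generators, and the two pictures are interchanged by linear duality. Together this establishes the claimed equivalence, with inverse sending $(\LL V, \delta)$ back to the $\Cinfty$-algebra $A$ determined by $\shift A \cong \dual V$ and the dual of $\delta$.

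The minimality statement is then immediate: $A$ is minimal iff its internal differential vanishes iff the weight-preserving part of $d+b$ is zero iff the linear component $V \to V$ of the dual derivation $\delta$ vanishes iff $\delta(V) \subseteq \freeLie[\geq 2] V$, which is exactly the minimality condition for the dg Lie algebra. The only non-formal step in the argument is the identification of $\dual{\cofreeLie(\shift A)}$ with $\freeLie(\dual{\shift A})$ together with the compatible dualization of all attached structure, which requires careful bookkeeping of operadic suspensions and Koszul signs; once this is in place, the proof reduces to standard manipulations of (co)derivations and morphisms of (co)free (co)Lie (co)algebras.
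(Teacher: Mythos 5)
Your proposal is correct and follows exactly the route the paper intends: the paper dispatches this proposition as "a straightforward consequence of \cref{lemma:Cinfty_equiv_def}", and your argument is precisely the careful execution of that — encoding the $\Cinfty$-structure as a weight-decreasing coderivation on $\cofreeLie(\shift A)$ and dualizing to a derivation on $\freeLie(\dual{(\shift A)})$, with the simple-connectivity and finite-type hypotheses guaranteeing degreewise finite-dimensionality so that duality is a perfect involution. Your treatment of morphisms via (co)restriction to (co)generators and of minimality via the vanishing of the linear part is likewise the standard bookkeeping the paper leaves implicit.
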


The following is dual to \cref{prop:homotopy groups from sullivan model}. For a proof, see \cite[Proposition 24.4]{FelixHalperinThomas01}.

\begin{proposition} \label{prop:quillen model indec}
If $X$ is a simply connected space with minimal Quillen model
$$\LL_X = (\freeLie V,\delta) \simeq \lambda X,$$
then $V_k \cong \rHo{k+1}(X;\QQ)$ for all $k$.
\end{proposition}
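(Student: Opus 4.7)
The plan is to compute $\Ho{*}(\CE_*(\LL_X))$ in two different ways and compare. On one hand, applying $\CE_*$ to the quasi-isomorphism $\LL_X \xlongto{\eq} \lambda X$ from \cref{thm:quillen model} and invoking Quillen's theorem that $\CE_*(\lambda X)$ is a cocommutative dg coalgebra model of $X$, we obtain $\Ho{*}(\CE_*(\LL_X)) \cong \Ho{*}(X;\QQ)$. On the other hand, I claim that minimality of $\LL_X$ forces
\[ \Ho{*}(\CE_*(\LL_X)) \cong \QQ \oplus \shift V, \]
with the $\shift V$ summand carrying the zero induced differential. Comparing yields $\shift V \cong \rHo{*}(X;\QQ)$ and hence $V_k \cong \rHo{k+1}(X;\QQ)$.

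To prove the claim, I would use the description $\CE_*(\LL_X) = (\coSA(\shift \freeLie V), d_{\text{bracket}} + d_\delta)$, where $d_{\text{bracket}}$ is the coderivation coming from the bracket on $\freeLie V$ and $d_\delta$ from $\delta$, and filter the complex decreasingly by \emph{total Lie weight}---the sum, over tensor factors of an element of $\coSA(\shift \freeLie V)$, of the number of $V$-generators appearing in each factor. Since $\delta(V) \subseteq \freeLie[\geq 2] V$ by minimality, $d_\delta$ strictly raises total Lie weight, whereas $d_{\text{bracket}}$ preserves it. The associated graded therefore only sees $d_{\text{bracket}}$, and identifies with the Chevalley--Eilenberg complex of the underlying abstract free Lie algebra $\freeLie V$ (no $\delta$) further stratified by Lie weight. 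The classical computation of the Lie homology of a free Lie algebra---one formulation of Koszulness of the pair $(\Com, \Lie)$---yields $\Ho{*}(\mathrm{Gr}^0) = \QQ$, $\Ho{*}(\mathrm{Gr}^1) = \shift V$, and $\Ho{*}(\mathrm{Gr}^n) = 0$ for $n \ge 2$. Hence the $E^1$-page of the associated spectral sequence is concentrated in total Lie weights $0$ and $1$; every $d^r$ out of $\shift V$ necessarily lands in a trivial group, so the sequence collapses at $E^1$.

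The step I expect to be the main obstacle is this second one, in particular the acyclicity of $\mathrm{Gr}^n$ for $n \ge 2$. This is the Koszul-theoretic vanishing of higher Lie homology of free Lie algebras, and the care needed is in setting up the bigrading (CE exterior degree versus Lie weight) so that each Lie-weight subcomplex is seen to be acyclic on its own, rather than only having the correct total Lie-weight distribution cancelled out in the whole complex.
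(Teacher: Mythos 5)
Your argument is correct and is essentially the standard proof, namely the one in the reference the paper cites for this statement (F\'elix--Halperin--Thomas, Proposition 24.4): one computes $\Ho{*}\big(\CE{*}(\LL_X)\big)$ once as $\Ho{*}(X;\QQ)$ via Quillen's theorem that $\CE{*}(\lambda X)$ is a coalgebra model for the rational chains on $X$, and once as $\QQ\oplus \shift V$ via the word-length filtration, whose associated graded is acyclic in Lie weights $\ge 2$ because the differential of a minimal model is decomposable. Your identification of the key technical point (the weight-homogeneous acyclicity of the higher-weight part of $\CE{*}(\freeLie V)$, i.e.\ Koszulness of the free Lie algebra) and the collapse of the spectral sequence for degree and weight reasons are exactly as in that proof.
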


In particular, \cref{prop:quillen model indec} together with \cref{prop:C-infinity dictionary} imply that the differential in the minimal Quillen model $\LL_X$ of a simply connected space $X$ of finite $\QQ$-type corresponds to a minimal $\Cinfty$-algebra structure on the cohomology $\rCoho{*}(X;\QQ)$ and, moreover, $\LL_X$ may be identified with the Harrison cochains $\coHarr{*}(\rCoho{*}(X;\QQ))$ on this $\Cinfty$-algebra. This leads to an alternative proof of \cref{thm:C-infinity model}.

\begin{proof}[Second proof of \Cref{thm:C-infinity model}]
  By \cref{thm:quillen model}, two simply connected spaces $X$ and $Y$ of finite $\QQ$-type are rationally equivalent if and only if their minimal Quillen models $\LL_X$ and $\LL_Y$ are isomorphic. Since we may identify $\LL_X$ with the construction $\coHarr{*}(\rCoho{*}(X;\QQ))$, this is equivalent to $\Coho{*}(X;\QQ)$ and $\Coho{*}(Y;\QQ)$ being $\Cinfty$-isomorphic.
\end{proof}

We can summarize the situation in a diagram
$$
\xymatrix{\textrm{Sullivan model $\big(\Lambda V,d\big)$}  \ar[r]^-{\text{HTT}} & \textrm{$\Cinfty$-algebra $\rCoho{*}(X;\QQ)$} \ar[d]^-{\coHarr{*}(-)} \\
\textrm{$\Linfty$-algebra $\htpygrp{*+1}(X) \tensor \QQ$} \ar[u]^-{\coCE{*}(-)} & \ar[l]_-{\text{HTT}} \textrm{Quillen model $\big(\freeLie(W),\delta\big)$}} 
$$
of different models of a simply connected space $X$.



\section{Lecture 2: Koszul algebras, formality, and coformality}

The notion of formality has long been a central one in rational homotopy theory. A landmark paper featuring the notion is \cite{DGMS75}.
The notion of coformality, introduced in \cite{NeisendorferMiller78}, has not been studied to the same extent, perhaps because it is not as clearly linked to geometric structures on the space, but examples of coformal spaces abound. Koszul algebras were introduced by Priddy \cite{Priddy70} as a tool for computing cohomology algebras $\Ext_A^*(k,k)$ of augmented associative algebras $A \to k$.

In this lecture, we will review the interplay between the notions of formality, coformality and Koszul algebras that was elucidated in \cite{Berglund14bis}. We will also adapt the notion of Koszul $\Ainfty$-algebras that was introduced in \cite{BerglundBorjeson20} to $\Cinfty$- and $\Linfty$-algebras and establish analogs of the results obtained there.

\subsection{Formality and coformality}

We begin by defining the titular notions.

\begin{definition} \label{def:(co)formality}
  Let $X$ be a simply connected space of finite $\QQ$-type.
  \begin{itemize}
    \item $X$ is called \emph{formal} if $\PDR(X)$ is quasi-isomorphic to $\Coho * (X; \QQ)$ as a cdga.
    \item $X$ is called \emph{coformal} if $\Quillen X$ is quasi-isomorphic to $\htpygrp{* + 1}(X) \tensor \QQ$ as a dg Lie algebra.
  \end{itemize}
\end{definition}

\begin{example} \label{ex:cpt_Kahler}
~
\begin{enumerate}
\item A compact Kähler manifold is formal by the work of Deligne--Griffiths--Morgan--Sullivan \cite{DGMS75}. In particular, smooth complex projective varieties are examples of formal spaces.

\item A theorem of Miller \cite{Miller79} says that for all $k>1$, every $(k-1)$-connected Poincar\'e duality space of dimension at most $4k-2$ is formal. In particular, all simply connected closed manifolds of dimension at most $6$ are formal.

\item A simply connected space $X$ is called \emph{rationally elliptic} if $\Coho{*}(X;\QQ)$ and $\htpygrp{*}(X)\otimes \QQ$ are finite dimensional. Elliptic spaces with positive Euler characteristic are formal. In fact, the cohomology ring of such a space has the form $\QQ[x_1,\ldots,x_n]/(f_1,\ldots,f_n)$, where $x_1,\ldots, x_n$ are even dimensional classes and $f_1,\ldots,f_n$ is a regular sequence \cite[Proposition 32.16(ii)]{FelixHalperinThomas01}. A space whose cohomology ring has this form is automatically formal, see \cite[\S 16]{BousfieldGugenheim76}. Examples of elliptic spaces with positive Euler characteristic include complex projective spaces and Grassmannians, or more generally homogeneous spaces $G/H$ for $H$ a closed subgroup of maximal rank in a connected compact Lie group $G$.

\item The arguably simplest example of a non-formal Sullivan algebra is
$$\big(\Lambda(x,y,z),d\big),$$
where $x$ and $y$ are cocycles of odd degree $k$ and $z$ is a generator of degree $2k-1$ such that $dz = xy$. Formality is obstructed by the existence of non-trivial Massey products, e.g.\ $\langle [x],[x],[y] \rangle \ne 0$. This Sullivan algebra represents an elliptic Poincar\'e duality space of dimension $4k-1$ with Euler characteristic zero, so it shows that the bound in Miller's theorem is sharp, and it shows that the non-vanishing of the Euler characteristic is necessary in the above example. This space is however coformal. See \cref{ex:nonformal} for a further discussion.

\item K\"ahler manifolds are examples of symplectic manifolds, but symplectic manifolds need not be formal in general, see \cite{BabenkoTaimanov00,FernandezMunoz08} and \cite[p.163]{FelixHalperinThomas01}.
\end{enumerate}
\end{example}

Whether a space is (co)formal is reflected by certain algebraic properties of its various rational models. This is made precise by the following two propositions.

We say that the differential in a minimal Quillen model $(\LL V,d)$ is quadratic if it satisfies $d(V) \subseteq \LL^{2}V$. Similarly, the differential of a minimal Sullivan model $(\Lambda V,d)$ is said to be quadratic if $d(V) \subseteq  \Lambda^{2} V$.

\begin{proposition} \label{prop:formality conditions}
  Let $X$ be a simply connected space of finite $\QQ$-type.
  Then the following are equivalent
  \begin{enumerate}
      \item The space $X$ is formal.
      \item The $\Cinfty$-algebra $\Coho{*}(X;\QQ)$ is $\Cinfty$-isomorphic to one with $m_n=0$ for $n\ne 2$.
      \item The space $X$ admits a minimal Quillen model with quadratic differential.
  \end{enumerate}
\end{proposition}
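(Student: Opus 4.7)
The plan is to prove the chain $(1) \Leftrightarrow (2) \Leftrightarrow (3)$, using the minimality theorem for $\Cinfty$-algebras for the first equivalence and the Harrison-complex dictionary of Proposition \ref{prop:C-infinity dictionary} for the second.

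For $(1) \Leftrightarrow (2)$, I would observe that $\Coho{*}(X;\QQ)$, with its cup product and zero differential, is a minimal cdga, and is therefore its own minimal $\Cinfty$-model. On the other hand, the $\Cinfty$-structure provided by Theorem \ref{thm:C-infinity model} is, by construction, the minimal $\Cinfty$-model of $\PDR(X)$. The second bullet of Theorem \ref{thm:minimal_Pinfty} then says that the cdgas $\PDR(X)$ and $\Coho{*}(X;\QQ)$ are quasi-isomorphic if and only if these two minimal models are $\Cinfty$-isomorphic. The former is the definition of formality, and the latter is exactly (2).

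For $(2) \Leftrightarrow (3)$, I would appeal to Proposition \ref{prop:C-infinity dictionary}: the functor $\coHarr{*}$ gives an equivalence between simply connected finite type $\Cinfty$-algebras with $\Cinfty$-isomorphisms and finite type dg Lie algebras of the form $(\LL V,\delta)$ with $V=V_{\geq 1}$ with dg Lie isomorphisms, restricting to an equivalence between the minimal objects on each side. As noted in Section \ref{sec:nothing new}, under this equivalence the minimal $\Cinfty$-structure on $\rCoho{*}(X;\QQ)$ corresponds to the minimal Quillen model $\LL_X$. It then suffices to show that the component $m_n$ of the $\Cinfty$-structure corresponds to the piece of $\delta$ landing in $\LL^n V$, so that the vanishing of $m_n$ for all $n\ne 2$ is equivalent to $\delta(V) \subseteq \LL^2 V$.

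The main step, and the main potential obstacle, is to establish this weight-by-weight correspondence cleanly. The key input is Proposition \ref{lemma:Cinfty_equiv_def}: a minimal $\Cinfty$-structure on $A$ is a weight-decreasing coderivation $b$ of $\cofreeLie(\shift A)$ squaring to zero, and the datum of $m_n\colon A^{\otimes n} \to A$ corresponds (up to the expected shifts and signs) to the weight-$n$ part of the projection of $b$ onto the cogenerators $\shift A$. Dualizing to $\coHarr{*}(A) = (\LL V,\delta)$ and using that a derivation of $\LL V$ is determined by its restriction to $V$, this weight-$n$ component becomes precisely the piece $V \to \LL^n V$ of $\delta$. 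Granted this, the $\Cinfty$-isomorphism in (2) translates through Proposition \ref{prop:C-infinity dictionary} to a dg Lie isomorphism of $\LL_X$ with some $(\LL V,\delta)$ having $\delta(V)\subseteq \LL^2 V$, which by uniqueness of minimal Quillen models up to isomorphism is equivalent to (3).
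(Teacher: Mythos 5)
Your proposal is correct and follows essentially the same route as the paper: the equivalence $(1)\Leftrightarrow(2)$ via the second part of \cref{thm:minimal_Pinfty} applied to $\PDR(X)$ and $\Coho{*}(X;\QQ)$, and $(2)\Leftrightarrow(3)$ via the dictionary of \cref{prop:C-infinity dictionary}, under which $m_n$ corresponds to the component of $\delta$ landing in $\LL^n V$. The extra detail you supply on the weight-by-weight correspondence is a correct unpacking of what the paper states in one line.
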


\begin{proposition} \label{prop:coformality conditions}
  Let $X$ be a simply connected space of finite $\QQ$-type.
  Then the following are equivalent
  \begin{enumerate}
      \item The space $X$ is coformal.
      \item The $\Linfty$-algebra $\htpygrp{*+1}(X)\tensor \QQ$ is $\Linfty$-isomorphic to one with $\ell_n=0$ for $n\ne 2$.
      \item The space $X$ admits a minimal Sullivan model with quadratic differential.
  \end{enumerate}
\end{proposition}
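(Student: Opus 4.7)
The plan is to establish $(3) \Leftrightarrow (2) \Leftrightarrow (1)$, mirroring the formality case but exchanging the roles of the Sullivan/$\Cinfty$ and Quillen/$\Linfty$ sides.

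For $(2) \Leftrightarrow (3)$, the workhorse is the equivalence of categories $\coCE{*}(\blank)$ of \cref{prop:L-infinity dictionary}. Under this equivalence, the minimal Sullivan model $\mathcal{M}_X = (\SA V, d)$ corresponds to a minimal $\Linfty$-algebra on $\htpygrp{*+1}(X) \tensor \QQ$. The differential $d$ decomposes by arity as $d = d_2 + d_3 + \cdots$, with $d_n$ sending a generator in $V$ into weight $n$ monomials in $\SA V$; up to shifts and signs, $d_n$ is dual to $\ell_n$, so $d$ is quadratic exactly when $\ell_n = 0$ for $n \neq 2$. If $\mathcal{M}_X$ has quadratic differential, then the shifted dual of $V$ inherits a graded Lie algebra structure $\mathfrak{g}$ with $\coCE{*}(\mathfrak{g}) \iso \mathcal{M}_X$, and \cref{prop:L-infinity dictionary} translates this into an $\Linfty$-isomorphism between the minimal $\Linfty$-structure and $\mathfrak{g}$. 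Conversely, any $\Linfty$-isomorphism from the minimal $\Linfty$-structure to a graded Lie algebra produces, via $\coCE{*}$, an isomorphism of $\mathcal{M}_X$ with a Sullivan algebra with quadratic differential.

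For $(1) \Leftrightarrow (2)$, apply the homotopy transfer theorem to Quillen's dg Lie algebra $\lambda X$ to obtain a minimal $\Linfty$-algebra $L$ on $\htpygrp{*+1}(X) \tensor \QQ$ together with $\infty$-quasi-isomorphisms in both directions between $L$ and $\lambda X$ (\cref{thm:minimal_Pinfty}). If $X$ is coformal, then $\lambda X$ is quasi-isomorphic, as a dg Lie algebra, to the graded Lie algebra $\mathfrak{g} \defeq \htpygrp{*+1}(X) \tensor \QQ$; since $\mathfrak{g}$ is already its own minimal $\Linfty$-model, \cref{thm:minimal_Pinfty} forces $L$ to be $\Linfty$-isomorphic to $\mathfrak{g}$, yielding (2). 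Conversely, if $L$ is $\Linfty$-isomorphic to some graded Lie algebra $\mathfrak{g}'$, composition yields an $\infty$-quasi-isomorphism from $\lambda X$ to $\mathfrak{g}'$, which by \cref{lemma:rectification} (applicable since the Koszul dual cooperad $\Koszuldual{\Lie}$ is coaugmented) upgrades to a quasi-isomorphism of dg Lie algebras; taking homology identifies $\mathfrak{g}'$ with $\htpygrp{*+1}(X) \tensor \QQ$ as a graded Lie algebra, giving coformality.

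The main subtlety is that two a priori different minimal $\Linfty$-structures on $\htpygrp{*+1}(X) \tensor \QQ$ come into play — the one transferred from $\lambda X$, and the one extracted from $\mathcal{M}_X$ via \cref{prop:L-infinity dictionary}. Their agreement on the nose is Majewski's theorem resolving the Baues--Lemaire conjecture; however, that level of comparison is not needed here, since both structures represent the rational homotopy type of $X$ and are therefore $\Linfty$-isomorphic by the uniqueness clause in \cref{thm:L-infinity model} (equivalently, by \cref{thm:minimal_Pinfty}).
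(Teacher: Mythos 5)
Your two main equivalences are carried out correctly and follow the route the paper intends (the paper proves \cref{prop:formality conditions} in detail and declares the coformality case ``entirely analogous''): $(1)\Leftrightarrow(2)$ via homotopy transfer applied to $\lambda X$ together with \cref{thm:minimal_Pinfty} and \cref{lemma:rectification}, and $(2)\Leftrightarrow(3)$ via the dictionary of \cref{prop:L-infinity dictionary}. You are also right to flag that these two halves involve a priori different minimal $\Linfty$-structures on $\htpygrp{*+1}(X)\tensor\QQ$.

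The gap is in how you close that loop. The claim that the Quillen-derived and Sullivan-derived structures are ``$\Linfty$-isomorphic by the uniqueness clause in \cref{thm:L-infinity model} (equivalently, by \cref{thm:minimal_Pinfty})'' does not hold up. \cref{thm:L-infinity model} asserts that one fixed construction is a complete invariant, i.e.\ it compares the values of that single construction on two different spaces; it says nothing about two different constructions applied to the same space (two complete invariants of the same object need not be isomorphic to one another). Likewise, \cref{thm:minimal_Pinfty} compares the homologies of quasi-isomorphic $\operad P$-algebras, and to apply it here you would need to know beforehand that $\lambda X$ is ($\infty$-)quasi-isomorphic to something whose transferred homology is the Sullivan-derived structure --- which is exactly the Quillen--Sullivan comparison you are trying to bypass. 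The paper's own remark on the Baues--Lemaire conjecture (following the second proof of \cref{thm:L-infinity model}) states explicitly that the $\Linfty$-isomorphism of these two structures \emph{is} equivalent to that conjecture, proved by Majewski; note also that Majewski's theorem is precisely this up-to-$\Linfty$-isomorphism statement, not an ``on the nose'' agreement as you write. So the correct fix is the opposite of your final sentence: you must invoke Majewski's theorem (or some other form of the compatibility of Quillen's and Sullivan's theories) to splice your two equivalences together. The paper's terse proof leans on the same comparison implicitly, but having correctly named the subtlety, you cannot then dismiss it with a justification that does not work.
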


\begin{proof}
  We prove \cref{prop:formality conditions}. 
  The proof of \cref{prop:coformality conditions} is entirely analogous.
  An application of the second part of \cref{thm:minimal_Pinfty} to the cdgas $A = \PDR(X)$ and $A'= \Coho{*}(X;\QQ)$ shows that $X$ is formal if and only if $\Coho{*}(X;\QQ)$, with the transferred $\Cinfty$-algebra structure, is $\Cinfty$-isomorphic to $\Coho{*}(X;\QQ)$ with the trivial $\Cinfty$-algebra structure.
  For the equivalence with the last condition, one simply notes that under the equivalence of categories in \cref{prop:C-infinity dictionary}, a $\Cinfty$-algebra $A$ has $m_n = 0$ for $n\ne 2$ precisely when the differential of $\coHarr{*}(A)$ is quadratic. 
\end{proof}

\begin{remark} \label{remark:infinity-isomorphism}
  One should be careful when talking about ``the'' $\Cinfty$-structure on $\Coho{*}(X;\QQ)$, because it is only defined up to $\Cinfty$-isomorphism, a notion which is much more flexible than the usual notion of isomorphism.
  
  For example, consider the space
  $$X = S^2 \vee \big(S^2 \times S^3\big).$$
  It has minimal Quillen model
  $$\big( \freeLie(\alpha,\beta,\gamma,\xi), \delta \big),$$
  where $|\alpha| = |\beta| = 1$, $|\gamma| = 2$, and $|\xi| = 4$. The differential $\delta$ is trivial on $\alpha$, $\beta$, and $\gamma$, but
  $$\delta(\xi) = [\alpha,\gamma].$$
  
  Another minimal Quillen model is given by
  $$\big( \freeLie(\alpha,\beta,\gamma,\xi), \delta' \big),$$
  where $\delta'$ agrees with $\delta$ except
  $$\delta'(\xi) = [\alpha,\gamma] + [\alpha,[\alpha,\beta]].$$
  An isomorphism $\varphi\colon (\LL,\delta) \to (\LL,\delta')$ is determined by
  $$\varphi(\gamma) = \gamma + [\alpha,\beta]$$
  and by letting $\varphi$ act by the identity on the other generators.
  
  Under the equivalence of \cref{prop:C-infinity dictionary}, the differentials $\delta$ and $\delta'$ correspond to two different $C_\infty$-structures $m$ and $m'$ on $\Coho{*}(X;\QQ)$ and $\varphi$ corresponds to a $\Cinfty$-isomorphism between them. In the first $m_n = 0$ for all $n\ne 2$, but the presence of the cubic term in $\delta'(\xi)$ means that $m_3'\ne 0$.
\end{remark}

\begin{remark}
As discussed in \cref{remark:pdr}, the dg-algebra $\PDR(X)$ is quasi-isomorphic to $\Cochains{*}(X;\QQ)$. In particular, if $X$ is formal, then $\Cochains{*}(X;\QQ)$ is formal as an associative dg algebra. Since $\Cochains{*}(X;\QQ)$ is in general non-commutative, it is not obvious that the converse should hold, but this has been proved by Saleh \cite{Saleh17}. Similarly, Saleh shows that $X$ is coformal if and only if the associative dg algebra $\Chains{*}(\Loops X;\QQ)$ is formal.
A generalization was obtained by Campos--Petersen--Robert-Nicoud--Wierstra \cite{CPRW19}, who proved that two cdgas are quasi-isomorphic if and only if their underlying associative dg algebras are quasi-isomorphic.

\end{remark}

\subsection{Koszul algebras and simultaneous formality and coformality}
In this section, we will examine how the notions of formality, coformality, and Koszul algebras interact, following \cite{Berglund14bis}.

In what follows, $A$ will be a graded commutative non-unital algebra (e.g.\ $\rCoho{*} (X; \QQ)$ for some space $X$) and $L$ a graded Lie algebra (e.g.\ $\htpygrp{* + 1}(X)\tensor \QQ$).

\begin{definition}
  A \emph{weight grading} on a graded commutative non-unital algebra $A$ is a decomposition
  \[ A = \Dirsum_{w = 1}^\infty \weight w A \]
  such that $\weight p A \cdot \weight q A \subseteq \weight {p+q} A$.
  We consider it to be a cohomological grading (note that it is, accordingly, written as a superscript).
\end{definition}

A weight grading on $A$ induces a weight grading on
\[ \Harr{*} (A)  =  \bigl( \cofreeLie(\shift A), b \bigr) \]
by letting $\shift$ have weight $-1$ (here $b$ is as in \Cref{lemma:Cinfty_equiv_def}; note that the $d$ occurring there is trivial since $A$ has no differential).
We obtain a cochain complex
\begin{equation} \label{eq:Harr_weight_cmplx}
  \weight 0 {\Harr * (A)}  \xlongto{b}  \weight 1 {\Harr * (A)}  \xlongto{b}  \dots
\end{equation}
since $b$ increases weight by $1$.

\begin{definition}
  A graded commutative non-unital algebra $A$ is \emph{Koszul} if it admits a weight grading such that the cochain complex \eqref{eq:Harr_weight_cmplx} is exact, i.e.\ if $\Coho * (\Harr * (A)) = \ker b \intersect \weight 0 {\Harr * (A)}$.
\end{definition}

Next, we will see that Koszul algebras admit presentations of the following special form.

\begin{definition}
  A \emph{quadratic presentation} of a graded commutative non-unital algebra $A$ is a graded vector space $V$ together with a surjective map of graded algebras
  $$\pi \colon \SA V \longto A$$
  such that its kernel $I = \ker \pi$ is generated as an ideal by the kernel $R$ of the restriction $\SA[2] V \subseteq \SA V \to A$.
  
  A graded commutative non-unital algebra $A$ is \emph{quadratic} if it has some quadratic presentation.
\end{definition}

\begin{remark} \label{rem:quad_alg_unique}
  The quadratic presentation of a quadratic algebra $A$ is unique up to isomorphism.
  More precisely we have that, given two quadratic presentations $p \colon \SA V \to A$ and $q \colon \SA W \to A$ of $A$, there exists an isomorphism $f \colon V \to W$ of graded vector spaces and an automorphism $g$ of the algebra $A$ such that $q \after \SA f = g \after p$.
  
  To prove this, choose a lift $\tilde p \colon \SA V \to \SA W$ of $p$ along $q$ and check that its homogeneous part $f = \tilde p_0 \colon V \to W$ is an isomorphism of graded vector spaces such that $\SA f$ descends to an algebra isomorphism $g$ of $A$.
\end{remark}

\begin{remark}
  A quadratic presentation of $A$ induces a weight grading on $A$ by setting $\weight w A$ to be $\pi(\SA[w] V)$.
\end{remark}

The following proposition is a special case of \cite[Theorem 2.11]{Berglund14bis}.

\begin{proposition} \label{prop:alg_Koszul}
  Let $A$ be a graded commutative non-unital algebra.
  If $A$ is Koszul, then $A$ is quadratic.
  
  More explicitly, let $A = \Dirsum_{w = 1}^\infty \weight w A$ be a weight grading such that the cochain complex \eqref{eq:Harr_weight_cmplx} is exact.
  Then, writing $V \defeq \weight 1 A$, the multiplication map $\SA V \to A$ is a quadratic presentation of $A$, i.e.\ we have two short exact sequences
  \begin{gather}
    0 \longto I \longto \SA V \longto A \longto 0 \nonumber \\
    0 \longto R \longto \SA[2] V \longto \weight 2 A \longto 0 \label{eq:alg_Koszul_ses}
  \end{gather}
  such that $I$ is the ideal generated by $R = \weight 2 I$.
\end{proposition}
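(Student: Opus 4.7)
The approach is to exploit the exactness of \eqref{eq:Harr_weight_cmplx} in each successive positive weight via an analysis of the low Lie-arity portions of $\Harr^*(A)$. Set $V \defeq \weight{1}{A}$. The combinatorial starting point is that a Lie word in $\cofreeLie(\shift A)$ of Lie-arity $n$ whose letters have $A$-weights $w_1, \dots, w_n$ has Harrison weight $w_1 + \dots + w_n - n$; hence $\weight{0}{\Harr^*(A)}$ coincides with $\cofreeLie(\shift V)$, and restricting \eqref{eq:Harr_weight_cmplx} to a fixed total $A$-weight $N$ yields a finite subcomplex in which the weight-$w$ term sits in Lie-arity $N - w$. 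On this subcomplex the differential $b$ multiplies an adjacent pair of letters via the product of $A$.

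To prove that $\pi \colon \SA V \to A$ is surjective, I would induct on $N$. For $N \geq 2$, the rightmost non-trivial position of the total-weight-$N$ subcomplex is $\shift \weight{N}{A}$ (Lie-arity $1$), and the Koszul hypothesis forces the incoming differential $b$ to be surjective there. Since this $b$ is the (antisymmetrized) multiplication map
\[ \bigoplus_{i + j = N,\, i, j \geq 1} \weight{i}{A} \tensor \weight{j}{A} \longto \weight{N}{A}, \]
the weight-$N$ part of $A$ is spanned by products of strictly-lower-weight elements, and the induction gives $A \subseteq \pi(\SA V)$.

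For the claim that $I \defeq \ker \pi$ is generated as an ideal by $R \defeq \weight{2}{I}$, I would again induct on $N \geq 3$, aiming to show $\weight{N}{I} \subseteq \weight{N-2}{\SA V} \cdot R$. Given a relation $\rho \in \weight{N}{I}$, the surjectivity from the first step lets me represent $\rho$ by an antisymmetric element $\tilde \rho$ in the Lie-arity-$2$ part of $\cofreeLie(\shift A)$ of total $A$-weight $N$ that is annihilated by $b$. This $\tilde \rho$ is then a cocycle at Lie-arity $2$ in weight $N - 2 \geq 1$, so the Koszul hypothesis supplies a chain $\sigma$ at Lie-arity $3$ with $b(\sigma) = \tilde \rho$. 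Using the first step together with the inductive hypothesis, one reduces to the case where all letters of $\sigma$ lie in $V$, i.e., where $\sigma \in \cofreeLie(\shift V)$; unwinding $b(\sigma)$ in that situation then expresses $\rho$ as a sum $\sum_i \lambda_i r_i$ with $\lambda_i \in \weight{N-2}{\SA V}$ and $r_i \in R$.

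The main obstacle lies in this last step: one has to keep track of signs coming from the Lie-coalgebra cobracket and translate the abstract boundary identity $\tilde \rho = b(\sigma)$ into an explicit ideal-generation statement in $\SA V$, while making sure that the reduction of $\sigma$ to weight-$0$ data is compatible with the outer induction on $N$. This is where the real technical content of the proposition sits; once it is carried out, the two short exact sequences in the statement follow immediately from the definitions of $V$ and $R$.
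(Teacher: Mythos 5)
The paper does not actually prove this proposition --- it is quoted as a special case of \cite[Theorem 2.11]{Berglund14bis} --- so your proposal is measured here against the standard argument. Your first step is correct and essentially complete: in total $A$-weight $N \geq 2$ the top Harrison weight $N-1 \geq 1$ is occupied by $\shift \weight{N}{A}$ in Lie-arity one, exactness there forces the arity-two-to-arity-one differential (the multiplication $\bigoplus_{i+j=N} \weight{i}{A} \tensor \weight{j}{A} \to \weight{N}{A}$, restricted to the antisymmetric part, which has the same image by graded commutativity) to be surjective, and induction on $N$ gives surjectivity of $\SA V \to A$.

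The gap sits exactly where you flag it, but the repair is not the one you propose. The reduction ``to the case where all letters of $\sigma$ lie in $V$'' cannot be carried out: an arity-three element of $\cofreeLie(\shift A)$ all of whose letters lie in $V = \weight{1}{A}$ has total $A$-weight $3$, so for $N>3$ no such $\sigma$ exists, and re-expanding a higher-weight letter as a product of elements of $V$ raises the arity and destroys the identity $b(\sigma) = \tilde\rho$. The correct move is to push forward $b(\sigma)$ rather than to modify $\sigma$. Consider the map $\Phi$ from the arity-two part of $\cofreeLie(\shift A)$ (which is $\shift[2] \SA[2] A$ up to the standard d\'ecalage signs) to the quotient $\SA V / (I \cdot \SA V)$, defined by lifting each letter along $\pi$ and multiplying the lifts inside $\SA V$; this is well defined because changing a lift changes the product by an element of $I \cdot \SA V$. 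On one hand, taking the tautological lifts $\lambda_i, \mu_i$ of the letters of $\tilde\rho = \sum_i \shift\pi(\lambda_i) \wedge \shift\mu_i$ shows $\Phi(\tilde\rho) = [\rho]$. On the other hand, $\Phi$ annihilates the image of $b$ from arity three: the two adjacent contractions of a word $[\shift a_1 | \shift a_2 | \shift a_3]$ lift to $(\hat a_1 \hat a_2)\hat a_3$ and $\hat a_1(\hat a_2 \hat a_3)$, which coincide in $\SA V$ by associativity and carry opposite signs --- the same cancellation that gives $b^2 = 0$ on the arity-one part. Hence $\tilde\rho = b(\sigma)$ yields
\[ \rho \in \big(I \cdot \SA V\big) \intersect \SA[N] V = \sum_{2 \le M \le N-1} \weight{M}{I} \cdot \SA[N-M] V, \]
and your outer induction on $N$ (graded Nakayama) then gives $\weight{N}{I} \subseteq (R)$, with base case $\weight{3}{I} \subseteq R \cdot V$. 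With this replacement for the final step, your outline becomes a complete proof; as written, the final step would fail for every $N > 3$.
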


We will now discuss the Koszul property for Lie algebras.

\begin{definition}
  A \emph{weight grading} on a graded Lie algebra $L$ is a decomposition
  \[ L = \Dirsum_{w = 1}^\infty \weight w L \]
  such that $[\weight p A, \weight q A] \subseteq \weight {p+q} A$.
  We consider it to be a cohomological grading (note that it is, accordingly, written as a superscript).
\end{definition}

A weight grading on $L$ induces a weight grading on
\[ \CE * (L) = \bigl( \coSA(\shift L), b \bigr) \]
by letting $\shift$ have weight $-1$ (here $b$ is as in \Cref{lemma:Linfty_equiv_def}; note that the $d$ occurring there is trivial since $L$ has no differential).
We obtain a cochain complex
\begin{equation} \label{eq:CE_weight_cmplx}
  \weight 0 {\CE * (L)}  \xlongto{b}  \weight 1 {\CE * (L)}  \xlongto{b}  \dots
\end{equation}
since $b$ increases weight by $1$.

\begin{definition}
  A graded Lie algebra $L$ is \emph{Koszul} if it has some weight grading such that the cochain complex \eqref{eq:CE_weight_cmplx} is exact, i.e.\ if $\Coho * (\CE * (L)) = \ker b \intersect \weight 0 {\CE * (L)}$.
\end{definition}

Next, we will see that Koszul graded Lie algebras admit presentations of the following special form.

\begin{definition}
  A \emph{quadratic presentation} of a graded Lie algebra $L$ is a graded vector space $W$ together with a surjective map of graded Lie algebras $$\pi \colon \freeLie W \longto L$$
  such the kernel $J = \ker \pi$ is generated by the kernel $S$ of the restriction $\freeLie[2] W \subseteq \freeLie W \to L$.
  
  A graded Lie algebra $L$ is \emph{quadratic} if it has some quadratic presentation.
\end{definition}

\begin{remark} \label{rem:quad_lie_unqiue}
  As in \Cref{rem:quad_alg_unique}, the quadratic presentation of a quadratic Lie algebra $L$ is unique up to isomorphism.
\end{remark}

\begin{remark}
  A quadratic presentation of $L$ induces a weight grading on $L$ by setting $\weight w L$ to be $\pi(\freeLie[w] W)$.
\end{remark}

The following proposition is a special case of \cite[Theorem 2.11]{Berglund14bis}.

\begin{proposition} \label{prop:Lie_Koszul}
  Let $L$ be a graded Lie algebra.
  If $L$ is Koszul, then $L$ is quadratic.
  
  More explicitly, let $L = \Dirsum_{w = 1}^\infty \weight w L$ be a weight grading such that the cochain complex \eqref{eq:CE_weight_cmplx} is exact.
  Then, writing $W \defeq \weight 1 L$, the induced Lie algebra map $\freeLie W \to L$ is a quadratic presentation of $L$, i.e.\ we have two short exact sequences
  \begin{gather*}
  0 \longto J \longto \freeLie W \longto L \longto 0 \nonumber \\
  0 \longto S \longto \freeLie[2] W \longto \weight 2 L \longto 0 
  \end{gather*}
  such that $J$ is the Lie ideal generated by $S = \weight 2 J$.
\end{proposition}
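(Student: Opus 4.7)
The plan is to mirror the proof of \cref{prop:alg_Koszul}, exploiting the structural parallel between Harrison complexes of commutative algebras and Chevalley--Eilenberg complexes of Lie algebras. The two short exact sequences will be read off from exactness of \eqref{eq:CE_weight_cmplx} in carefully chosen bidegrees: besides the weight grading, the natural grading of $\CE^*(L) = \coSA(\shift L)$ by number of tensor factors provides a secondary grading, and the CE differential $b$ decreases this secondary grading by $1$ while increasing weight by $1$.

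First I would show that $\pi \colon \freeLie W \to L$ is surjective, by induction on weight. With the convention that $\shift$ has weight $-1$, the one-tensor-factor piece of $\CE^*(L)$ in weight $w$ is $\shift \weight{w+1} L$, while the two-tensor-factor piece in weight $w - 1$ is $\bigoplus_{p+q=w+1} \shift \weight p L \odot \shift \weight q L$; the CE differential between them is induced by the Lie bracket. Since the outgoing differential from the one-tensor-factor piece in weight $w$ lands in the (trivial, after reduction) zero-tensor-factor piece, exactness of \eqref{eq:CE_weight_cmplx} at weight $w$ forces this bracket map to be surjective onto $\weight{w+1} L$. Iterating on $w$ yields $\weight w L \subseteq \pi(\freeLie W)$ for every $w$, so $\pi$ is surjective. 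The case $w = 1$ gives in particular the surjection $\freeLie[2] W \twoheadrightarrow \weight 2 L$, from which the second short exact sequence follows by definition of $S$.

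It remains to show that $J$ is generated as a Lie ideal by $S$. Write $J' \subseteq J$ for the Lie ideal of $\freeLie W$ generated by $S$, put $L' \defeq \freeLie W / J'$, and consider the resulting surjection $\varphi \colon L' \twoheadrightarrow L$ of weight-graded Lie algebras, which by construction is an isomorphism in weights $\leq 2$. To upgrade this to an isomorphism in every weight, I would induct on $w$: granting that $\varphi$ is an isomorphism through weight $w - 1$, the weight complexes of $\CE^*(L)$ and $\CE^*(L')$ agree up to a controlled range, and the Koszul hypothesis on $L$ supplies enough acyclicity to force the inductive step $\weight w L' \cong \weight w L$.

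The main obstacle is this comparison step: the weight-$(w-1)$ part of $\CE^*$ already involves the Lie structure of $L$ in weight $w$, so some care is needed to convert acyclicity of the weight complex into the desired isomorphism $\weight w L' \cong \weight w L$. A clean implementation uses the Hochschild--Serre style spectral sequence associated to $0 \to \ker \varphi \to L' \to L \to 0$: Koszulity of $L$ combined with the inductive vanishing of $\ker \varphi$ in weights below $w$ jointly force $\weight w \ker \varphi = 0$. A more conceptual alternative derives the statement formally within operadic Koszul duality, as in \cite[Theorem 2.11]{Berglund14bis}.
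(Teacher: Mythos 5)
The paper offers no proof of this proposition at all---it is deferred entirely to \cite[Theorem 2.11]{Berglund14bis}---so your attempt at a self-contained argument is necessarily a different route from the paper's. The first half of your argument is correct and complete: since the coderivation $b$ on $\CE * (L)$ lowers word length by exactly one, every word of length one lying in positive weight is automatically a cocycle, and exactness of \eqref{eq:CE_weight_cmplx} forces it to be the boundary of a word of length two; unwinding the weights gives surjectivity of the bracket maps $\bigoplus_{p+q=v} \weight p L \otimes \weight q L \to \weight v L$ for all $v \geq 2$, hence surjectivity of $\freeLie W \to L$ and the second short exact sequence.

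The second half, however, stops exactly where the work starts: the assertion that Koszulity plus the inductive hypothesis forces $\weight w {\ker \varphi} = 0$ is the whole content of the quadraticity claim, and, as you yourself note, the weight-$(w-1)$ part of $\CE * (L)$ already involves $L$ in weight $w$, so the ``controlled range'' comparison is not automatic. The gap can be closed, and more cheaply than by a spectral sequence for $L' \to L$. Set $F \defeq \freeLie W$ and $J \defeq \ker(F \to L)$, and note $J \subseteq \freeLie[\geq 2] W \subseteq [F,F]$ since $F$ and $L$ agree in weight one. The graded Hopf formula (equivalently, the word-length-two part of the five-term exact sequence of $0 \to J \to F \to L \to 0$) identifies the word-length-two homology of $\CE * (L)$ with $(J \cap [F,F])/[F,J] = J/[F,J]$, an element of internal weight $v$ contributing to weight $v-2$ of \eqref{eq:CE_weight_cmplx}. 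Exactness in positive weights therefore says that $J/[F,J]$ is concentrated in internal weight $2$; since $[F,J]$ sits in weights $\geq 3$, this gives $\weight 2 J = S$ and $\weight v J = \weight v {[F,J]}$ for $v \geq 3$. The latter is spanned by brackets of elements of $F$ with elements of $J$ of strictly smaller weight, so induction on $v$ yields $J = (S)$. This pins down exactly which part of the Koszul hypothesis is used (acyclicity in word lengths one and two only), avoids introducing $L'$ and comparing $\CE * (L)$ with $\CE * (L')$ altogether, and applies verbatim to \cref{prop:alg_Koszul} with the Harrison complex in place of the Chevalley--Eilenberg complex.
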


We now discuss the Koszul dual of a quadratic commutative algebra or Lie algebra.

\begin{definition}
  We say that a graded commutative non-unital algebra $A$ and a graded Lie algebra $L$ are \emph{Koszul dual} if there exist some quadratic presentations $\SA V \to A$ and $\freeLie W \to L$ such that there is a nondegenerate pairing of degree $-1$
  \[ \pairing \blank \blank  \colon  V \tensor W  \longto  \QQ \]
  such that under the induced degree $-2$ pairing $\SA[2] V \tensor \freeLie[2] W  \to  \QQ$
  given by
  \begin{equation} \label{eq:weight_2_pairing}
  \begin{multlined}
    \pairing{x \wedge y}{[\alpha, \beta]}  \defeq \\ (-1)^{\deg y \deg \alpha + \deg x + \deg \alpha} \pairing x \alpha \pairing y \beta - (-1)^{\deg \alpha \deg \beta + \deg y \deg \beta + \deg x + \deg \beta} \pairing x \beta \pairing y \alpha
  \end{multlined}
  \end{equation}
  we have $R = \orth S$ with $R$ and $S$ as in \Cref{prop:alg_Koszul} and \Cref{prop:Lie_Koszul}, respectively.
  
  When $A$ and $L$ are Koszul dual, we write $L^! = A$ and $A^! = L$.
\end{definition}

The following is the more traditional way of introducing Koszul duals, which justifies the notations $L^!$ and $A^!$.
It is a straightforward reformulation of the preceding definition (using \Cref{rem:quad_alg_unique,rem:quad_lie_unqiue}).

\begin{proposition}
  Let $A$ be a graded commutative non-unital algebra and $L$ a graded Lie algebra.
  The following are equivalent:
  \begin{enumerate}
    \item $A$ and $L$ are Koszul dual.
    \item For any quadratic presentation $\pi \colon \SA V \to A$ of $A$ we have $L \iso \freeLie (\shift \dual V) / (S)$ where $S$ is the orthogonal complement of $R = \ker (\restrict \pi {\SA[2] V})$ under the pairing \eqref{eq:weight_2_pairing} associated to the pairing $V \tensor \shift \dual V \to \QQ$ of degree $-1$.
    \item For any quadratic presentation $\pi \colon \freeLie W \to L$ of $L$ we have that $A \iso \SA (\shift \dual W) / (R)$ where $R$ is the orthogonal complement of $S = \ker (\restrict \pi {\freeLie[2] W})$ under the pairing \eqref{eq:weight_2_pairing} associated to the pairing $\shift \dual W \tensor W \to \QQ$ of degree $-1$.
  \end{enumerate}
\end{proposition}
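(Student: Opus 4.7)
My plan is to establish (1) $\Leftrightarrow$ (2); the equivalence (1) $\Leftrightarrow$ (3) will then follow by a symmetric argument, exchanging the roles of $A$ and $L$ throughout. Both remaining implications rely on the uniqueness of quadratic presentations provided by \Cref{rem:quad_alg_unique,rem:quad_lie_unqiue}.

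The implication (2) $\Rightarrow$ (1) should be essentially by inspection. The evaluation pairing $V \tensor \shift \dual V \to \QQ$ is nondegenerate of degree $-1$, and the isomorphism $L \iso \freeLie(\shift \dual V)/(S)$ exhibits $\freeLie(\shift \dual V) \to L$ as a quadratic presentation of $L$ whose weight $2$ relation space is, by hypothesis, precisely the orthogonal complement of $R$. Together with $\pi \colon \SA V \to A$ this witnesses the Koszul duality of $A$ and $L$.

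For the converse (1) $\Rightarrow$ (2), I would fix quadratic presentations $p \colon \SA V_0 \to A$ and $q \colon \freeLie W_0 \to L$ together with a nondegenerate degree $-1$ pairing $V_0 \tensor W_0 \to \QQ$ witnessing Koszul duality. This pairing induces an isomorphism $W_0 \iso \shift \dual{V_0}$ intertwining it with evaluation, so transporting $q$ along it realises $L$ as $\freeLie(\shift \dual{V_0})/(S_0')$, with $S_0'$ the orthogonal complement of $R_0 \defeq \ker(\restrict p {\SA[2] V_0})$ under the evaluation pairing. Now let $\pi \colon \SA V \to A$ be an arbitrary quadratic presentation. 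By \Cref{rem:quad_alg_unique} there exist an isomorphism $f \colon V \iso V_0$ and an algebra automorphism $g$ of $A$ with $p \after \SA f = g \after \pi$, so that $\SA[2] f$ identifies $R \defeq \ker(\restrict \pi {\SA[2] V})$ with $R_0$. Applying $\shift \dual{(\blank)}$ and passing to free Lie algebras then identifies $S_0'$ with the orthogonal complement $S$ of $R$ for the evaluation pairing $V \tensor \shift \dual V \to \QQ$, yielding $L \iso \freeLie(\shift \dual V)/(S)$ as required.

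The main technical point I expect to need care with is that the formula \eqref{eq:weight_2_pairing} for the induced degree $-2$ pairing is natural in the underlying degree $-1$ pairing, so that the isomorphism $f$ transports orthogonal complements along it; this reduces to a direct inspection of the defining formula together with a routine sign check.
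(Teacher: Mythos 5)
Your proposal is correct and follows exactly the route the paper intends: the paper gives no written proof, stating only that the proposition is ``a straightforward reformulation of the preceding definition'' via the uniqueness of quadratic presentations (\cref{rem:quad_alg_unique,rem:quad_lie_unqiue}), and your argument is precisely that reformulation carried out, with the one genuine point of care (transport of orthogonal complements along the isomorphism of presentations) correctly identified.
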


The following theorem is proven in \cite[Theorem 1.2 and Theorem 1.3]{Berglund14bis}.

\begin{theorem}[Berglund] \label{thm:formal_coformal}
  The following are equivalent for a simply connected space $X$:
  \begin{enumerate}
    \item $X$ is formal and coformal.
    \item $X$ is formal and $\rCoho * (X; \QQ)$ is a Koszul algebra.
    \item $X$ is coformal and $\htpygrp{* + 1}(X) \tensor \QQ$ is a Koszul Lie algebra.
  \end{enumerate}
  If these conditions are satisfied, then $\rCoho * (X; \QQ)$ and $\htpygrp{* + 1}(X) \tensor \QQ$ are Koszul dual via the restriction of the Hurewicz pairing
  \[
  \arraycolsep=0.1em
  \begin{array}{rclcl}
    \rCoho * (X; \QQ) & \tensor & \htpygrp{* + 1}(X) & \longto & \QQ \\
    x & \tensor & \alpha & \longmapsto & \pairing{x}{\operatorname{hur}(\alpha)}
  \end{array}
  \]
  to indecomposables.
  Here $\operatorname{hur}$ denotes the Hurewicz homomorphism.
\end{theorem}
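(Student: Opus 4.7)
The plan is to translate (co)formality into algebraic conditions on the minimal Quillen model $\LL_X$ and minimal Sullivan model $\mathcal M_X$ via the dictionaries of \cref{prop:L-infinity dictionary,prop:C-infinity dictionary}, and then appeal to the classical Koszul duality between quadratic graded commutative algebras and quadratic graded Lie algebras. The starting observation, obtained by combining \cref{prop:formality conditions,prop:coformality conditions} with the dictionaries, is that formality of $X$ amounts to $\LL_X$ being of the form $\coHarr^*(A)$ for $A \defeq \rCoho^*(X;\QQ)$ regarded as a graded commutative algebra with zero differential, and dually coformality amounts to $\mathcal M_X$ being of the form $\coCE^*(L)$ for $L \defeq \htpygrp{*+1}(X)\tensor\QQ$ regarded as a graded Lie algebra with zero differential.

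For the implication $(2)\Rightarrow(1)+(3)$, I would argue as follows. Assuming $X$ is formal and $A$ is Koszul, formality first gives $\LL_X \simeq \coHarr^*(A)$, while Koszul-ness of $A$ is by definition the statement that $\coHarr^*(A)$ has cohomology concentrated in weight zero, where classical Koszul duality identifies it with the Koszul dual Lie algebra $A^!$. Hence $\LL_X \simeq A^!$ (with zero differential) as a dg Lie algebra, which exhibits $X$ as coformal. Passing to homology and invoking Quillen's identification $H_*(\LL_X) \iso \htpygrp{*+1}(X)\tensor\QQ$ identifies $L$ with $A^!$ as graded Lie algebras; since the Koszul dual of a Koszul commutative algebra is a Koszul Lie algebra, $L$ is Koszul, giving (3), and under this identification the Hurewicz pairing on indecomposables corresponds to the canonical Koszul duality pairing between $A$ and $A^!$. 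The implication $(3)\Rightarrow(1)+(2)$ follows by the entirely symmetric argument, interchanging the roles of Sullivan/Quillen, commutative/Lie, and formal/coformal.

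For the implication $(1)\Rightarrow(2)+(3)$, simultaneous (co)formality forces both minimal models to be quadratic, yielding quadratic presentations $A \iso \SA V/(R)$ and $L \iso \freeLie W/(S)$ together with a nondegenerate pairing $V \tensor W \to \QQ$ of degree $-1$ coming from Hurewicz (via the identifications in \cref{prop:homotopy groups from sullivan model,prop:quillen model indec}). That $\mathcal M_X = \coCE^*(L)$ has its quadratic differential encoding the Lie bracket of $L$ translates directly into the relation $R = \orth S$, so $A$ and $L$ are Koszul dual. It then remains to show that $A$ (equivalently $L$) is actually Koszul. Here one exploits that $H^* \coCE^*(L) = A$ together with the weight bigrading on $\coCE^*(L)$, whose diagonal cohomology always recovers the Koszul dual; the minimality of $\mathcal M_X$ is what forces the cohomology to be concentrated on the weight-equals-degree diagonal, which is exactly the Koszul condition for $L$.

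The main obstacle will be this final diagonality claim. Knowing only that the total cohomology of $\mathcal M_X = \coCE^*(L)$ equals $A$ as a graded algebra does not immediately pin down the bigraded structure, and one has to use minimality together with a careful Koszul-complex spectral sequence (comparing the weight filtration on $\mathcal M_X$ with the weight filtration on $A$ induced by its quadratic presentation) to exclude off-diagonal cohomology. This is the step that genuinely depends on the minimal model machinery rather than on formality and coformality in isolation.
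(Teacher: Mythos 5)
The paper does not actually prove this theorem: it is quoted from \cite[Theorems 1.2 and 1.3]{Berglund14bis}, so there is no in-text argument to compare yours against. Your overall strategy --- passing through \cref{prop:formality conditions,prop:coformality conditions,prop:L-infinity dictionary,prop:C-infinity dictionary} and reading Koszulness off the weight grading of the Harrison and Chevalley--Eilenberg complexes --- is the right one, and your arguments for $(2)\Rightarrow(1)+(3)$ and $(3)\Rightarrow(1)+(2)$ are essentially correct. The one point you should make explicit there is why weight-concentration of $\Ho{*}(\coHarr{*}(A))$ gives a quasi-isomorphism of \emph{dg Lie algebras} onto $A^!$: the kernel of the projection of $\coHarr{*}(A)$ onto its weight-zero homology is a dg Lie ideal (the positive-weight part together with the relevant image of the differential), so the projection is a surjection of dg Lie algebras, and it is a quasi-isomorphism precisely when the homology is concentrated in weight zero. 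With that in place, coformality and the identification of $\htpygrp{*+1}(X)\tensor\QQ$ with $A^!$ follow as you say.

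The genuine gap is in $(1)\Rightarrow(2)+(3)$, which is the substantive implication. Two things are missing. First, formality and coformality give you quadratic \emph{differentials} on the minimal Quillen and Sullivan models, not quadratic \emph{presentations} of the graded algebra $A=\rCoho * (X;\QQ)$ and the graded Lie algebra $L=\htpygrp{*+1}(X)\tensor\QQ$; producing those presentations (equivalently, producing a weight grading on $A$ and $L$ at all) is already part of what must be proved, since in the paper's setup quadraticity is a \emph{consequence} of Koszulness (\cref{prop:alg_Koszul}), not an input. Second, the mechanism you propose for excluding off-diagonal cohomology --- that ``the minimality of $\mathcal M_X$ is what forces'' the concentration --- cannot be correct: $\coCE{*}(L)$ is a minimal Sullivan algebra for \emph{every} positively graded Lie algebra $L$ of finite type, and minimal Sullivan models exist for all spaces in sight, including $\CP{n}$ for $n>1$, which is formal but whose cohomology ring is not Koszul (\cref{ex:noncoformal}). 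What does the work is the conjunction of the two hypotheses: one must show that if the weight-bigraded cdga $\coCE{*}(L)$ is formal as a cdga then its cohomology is concentrated in weight zero (equivalently, $L$ is Koszul), and dually on the Quillen side. That statement is exactly the content of \cite[Theorem 1.2]{Berglund14bis}, and your appeal to ``a careful Koszul-complex spectral sequence'' is a placeholder for that argument rather than a proof of it; as written, the crux of the theorem is assumed rather than established.
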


We now give a number of examples to illustrate how \cref{thm:formal_coformal} can be used to make computations or to deduce non-formality results. Many of these are taken from \cite[\S5]{Berglund14bis}.

\begin{example}[$H$-spaces]
Let $X$ be a simply connected $H$-space (for example the loop space $\Loops Y$ of a $2$-connected space $Y$) and assume $X$ has finite $\QQ$-type.
Then the cohomology ring $\Coho{*}(X;\QQ)$ is a free graded commutative algebra (cf.~\cite[p.143]{FelixHalperinThomas01}), and hence it admits a quadratic presentation
    \[\Coho{*}(X;\QQ) \iso \SA V,\]
    with no non-trivial relations, for some graded vector space $V$. Free algebras are intrinsically formal and Koszul. By \cref{thm:formal_coformal}, $X$ is also coformal, and the homotopy Lie algebra is isomorphic to the Koszul dual of the cohomology ring. Thus,
    \[ \htpygrp{* + 1}(X) \tensor \QQ  \iso (\SA V)^! \iso \freeLie W / \big(\freeLie[2] W \big) \iso W,\]
    for $W = (sV)^\vee$.
    In particular, the Koszul duality between homotopy and cohomology in this case boils down to the statement that the indecomposables of the cohomology ring of an $H$-space may be identified with the dual of $\htpygrp{*}(X) \tensor \QQ$.
\end{example}

\begin{example}[Co-$H$-spaces]
Let $X$ be a simply connected co-$H$-space, e.g.\ the suspension $\Susp Y$ of a connected space $Y$, and assume $X$ has finite $\QQ$-type. By \cite{Berstein61}, $X$ is rationally equivalent to a wedge of spheres. In particular, $X$ formal and coformal. Moreover, since all cup products in the reduced cohomology of a co-$H$-space are trivial, the cohomology ring admits the quadratic presentation
    \[ \Coho * (X; \QQ)  \iso \SA V / ( \SA[2] V ) \]
    where $V=\rCoho * (X; \QQ)$. This is a Koszul algebra. The homotopy Lie algebra is isomorphic to the Koszul dual Lie algebra, which is free in this case;
    \[ \htpygrp{* + 1}(X) \tensor \QQ \iso  \left(\SA V / ( \SA[2] V ) \right)^! \iso \freeLie W \]
    where $W = (sV)^\vee \iso \rHo{*+1}(X;\QQ)$.
\end{example}

\begin{example}[Highly connected manifolds]
Let $n\geq 2$ and suppose that $M$ is an $(n-1)$-connected closed oriented manifold of dimension $\le 3n - 2$.  Neisendorfer--Miller \cite{NeisendorferMiller78} observed that such manifolds are both formal and coformal provided $\dim \Coho * (M;\QQ)\geq 4$. A description of the homotopy Lie algebra was stated in \cite[\S5]{Neisendorfer79}. We will here revisit these results from the point of view of higher structures, following \cite[\S4.1]{BerglundBorjeson17}.

Denote by $m_k$ the operations of the $\Cinfty$-algebra structure on $\Coho * (M; \QQ)$.
Suppose that $x_1,\ldots,x_k$ are non-trivial cohomology classes of positive degree. By the connectivity assumption, we must have $\deg {x_i}\geq n$ for all $i$. If $k\geq 3$, this implies 
\begin{align*}
    \deg {m_k(x_1, \dots, x_k)}  & =  \deg {x_1} + \dots + \deg {x_k} + 2 - k \\ & \ge  nk + 2 - k \\ & \ge  3(n-1) + 2 = 3n - 1,
    \end{align*}
    so that $m_k = 0$ for $k\neq 2$.
    By \cref{prop:formality conditions}, this implies that $M$ is formal.
    
By Poincaré duality the cohomology of $M$ is of the form $\QQ 1 \oplus V \oplus \QQ z$ where $V$ consists of  indecomposable cohomology classes and $z$ is a fundamental cohomology class.
When $\dim V > 1$, one can deduce that $\Coho{*}(M; \QQ)$ is Koszul (cf.\ \cite[Theorem 4.2]{BerglundBorjeson17}) and admits the quadratic presentation
$$\Coho * (M;\QQ) \iso \SA V/(R),$$
where $R$ is the kernel of the map induced by the cup product pairing $\mu(x\wedge y) = \langle x \cup y, [M] \rangle$:
\[ 0 \longto R \longto \SA[2] V \xlongto{\mu} \QQ \longto 0. \]
Hence, by \cref{thm:formal_coformal}, $M$ is coformal and the homotopy Lie algebra admits the presentation
$$\htpygrp{*+1}(M) \tensor \QQ \iso \freeLie (W) / (\omega),$$
where $W = \dual{(sV)}$ and $\omega \in \LL^2(W)$ is dual to $\mu$.

\end{example}   

\begin{example}[Symplectic manifolds]
As pointed out above, symplectic manifolds need not be formal, but Lupton--Oprea \cite[Corollary 2.7]{LuptonOprea94} show that every coformal simply connected compact symplectic manifold $M$ is formal. Moreover, the cohomology ring $\Coho * (M;\QQ)$ is generated in degree $2$ in this case.
    This result also shows that non-formal symplectic manifolds are examples of spaces that are neither formal nor coformal.
\end{example}

\begin{example}[Moduli spaces of genus $0$ curves]
The Deligne--Mumford compactification $\overline{\mathscr M}_{0,n}$ of the moduli space of genus $0$ curves with $n$ marked points is known to be formal, and Dotsenko has shown that $\Coho * (\overline{\mathscr M}_{0,n}; \QQ)$ is Koszul, see \cite{Dotsenko22}.
\end{example}

\begin{example}[Configuration spaces]
Consider the configuration space of $n$ (ordered) points in $\RR^m$,
    \[ \Conf n {\RR^m}  \defeq  \left\{ (x_1, \dots, x_n) \in (\RR^m)^n  \mid  x_i \neq x_j \text{ when } i \neq j \right\}. \]
    Let $a_{i,j} \in \Coho{m-1}\big(\Conf n {\RR^m};\QQ\big)$ be the class obtained by pulling back the fundamental cohomology class of $S^{m-1}$ along the map
    $$\xi_{i,j} \colon \Conf n {\RR^m} \longto \Sphere{m-1}$$
    that sends a point $(x_1,\ldots,x_n)$ to the unit vector $\frac{x_i - x_j}{|x_i - x_j|}$.
    The classes $a_{i,j}$ for $1 \leq i < j \leq n$ generate the cohomology ring, and the kernel $I$ of the map
    $$\SA (a_{i,j}) \longto \Coho{*}\big(\Conf n {\RR^m};\QQ\big)$$
    is generated by the so-called \emph{Arnold relations}:
    \begin{align*}
      a_{i,j}^2 &= 0, \\
      a_{i,j} a_{j,k} + a_{j,k} a_{k,i} + a_{k,i} a_{i,j} &= 0,
    \end{align*}
    for $i, j, k$ distinct (here we set $a_{i,j} = (-1)^m a_{j,i}$ if $i > j$); this is due to Arnold \cite{Arnold69} for $m = 2$ and Cohen \cite[Theorem 11.7]{Cohen76} in the general case.
    This algebra can be seen to be Koszul (cf.\ \cite[Theorem 1.2]{Bezrukavnikov94} in the case $m = 2$, or \cite[Example 5.5]{Berglund14bis}).
    Next, it is well-known that the space $\Conf n {\RR^m}$ is formal. For $m=2$ this was observed in \cite{Arnold69}. For $m>2$, this statement is contained in the much more general result \cite[Theorem 1.2]{LambrechtsVolic14}.
    
    For $m>2$, it follows that $\Conf n {\RR^m}$ is coformal and that its homotopy Lie algebra may be computed as the Koszul dual Lie algebra. One calculates that the Koszul dual is given by
        \[ \htpygrp{* + 1}\big(\Conf n {\RR^m}\big) \tensor \QQ  \iso  \freeLie(\alpha_{i,j}) / (S) \]
    for certain classes $\alpha_{i,j}$ of degree $m-2$ for $1 \le i < j \le n$ and $(S)$ the ideal generated by
    \begin{align*}
      [\alpha_{i,j}, \alpha_{k,l}] &, \qquad \text{for distinct $i,j,k,l$,} \\
      [\alpha_{i,j}, \alpha_{i,k} + \alpha_{j,k}] &,  \qquad \text{for distinct $i,j,k$.}
    \end{align*}
    Here we interpret $\alpha_{j,i} = (-1)^m \alpha_{i,j}$ if $i<j$.
    This is the \emph{Drinfeld--Kohno Lie algebra}.
    In other words, the Arnold relations are orthogonal to the Drinfeld--Kohno relations.
    A computation of the homotopy Lie algebra using different methods can be found in \cite{CohenGitler02}.
\end{example}

\begin{remark} \label{rem:generalization_non_sc}
If $X$ is a connected, not necessarily simply connected, space of finite $\QQ$-type, then one can show that \Cref{thm:formal_coformal} remains true upon replacing $\htpygrp{* + 1}(X) \tensor \QQ$ by $\htpygrp{* + 1} (\BKQcompl X)$, where $\BKQcompl X$ denotes the Bousfield--Kan $\QQ$-completion of $X$ (see Appendix \ref{sec:nonnilpotent rht}). Quillen's functor $\lambda X$ is only defined for simply connected spaces, but an appropriate notion of coformality is obtained by replacing $\lambda X$ in \cref{def:(co)formality} with the complete dg Lie algebra $\coHarr * (\rCoho * (X; \QQ))$, the Harrison cochains on the cohomology $\Cinfty$-algebra. This agrees with the notion of coformality discussed in \cite[\S10.3]{BFMT20}.
\end{remark}


A special case of \cref{thm:formal_coformal} is the following result of Papadima--Yuzvinsky \cite{PapadimaYuzvinski99}. This was observed in \cite[Corollary 1.9]{Berglund14bis} in the nilpotent case, but by using \cref{rem:generalization_non_sc} one can upgrade this observation to cover the general case.

\begin{theorem}[Papadima--Yuzvinsky] \label{thm:PY}
  Let $X$ be a connected space of finite type.
  If $X$ is formal, then $X$ is a rational $K(\pi,1)$-space if and only if the cohomology ring $\Coho{*}(X;\QQ)$ is Koszul with weight equal to cohomological degree.
\end{theorem}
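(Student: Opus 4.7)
The plan is to deduce the theorem from \cref{thm:formal_coformal} together with the observation that, under Koszul duality, the condition ``weight equals cohomological degree'' on the commutative side corresponds to the Koszul dual Lie algebra being concentrated in degree $0$. Throughout, I will use \cref{rem:generalization_non_sc} to handle the non-simply-connected case by interpreting the homotopy Lie algebra as $\htpygrp{*+1}(\BKQcompl X)$; to ease notation, write $L = \htpygrp{*+1}(\BKQcompl X)$ for the (shifted) rational homotopy Lie algebra. Being a rational $K(\pi,1)$ then amounts to saying that $L$ is concentrated in degree $0$.

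For the ``only if'' direction, suppose $X$ is formal and a rational $K(\pi,1)$. Since $L$ is concentrated in cohomological degree $0$ and an $\Linfty$-operation $l_n$ on $L$ has degree $n - 2$, the operations $l_n$ must vanish for every $n \geq 3$ for degree reasons. By (the generalization of) \cref{prop:coformality conditions}, $X$ is coformal. Now \cref{thm:formal_coformal} applies: $\Coho{*}(X; \QQ)$ is Koszul, and the Hurewicz pairing exhibits Koszul duality with $L$. Under this duality, the generating space $V = \weight 1 {\Coho{*}(X;\QQ)}$ is paired nondegenerately with $\weight 1 L \subseteq L$ via a pairing of degree $-1$; since $\weight 1 L$ lives in cohomological degree $0$, the space $V$ must live in cohomological degree $1$. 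Because $\weight w {\Coho{*}(X;\QQ)}$ is the image of $\SA[w] V$ under the multiplication, it sits in cohomological degree $w$. Hence weight coincides with cohomological degree, as claimed.

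For the ``if'' direction, suppose $X$ is formal and that $A = \Coho{*}(X;\QQ)$ is Koszul with weight equal to cohomological degree. By \cref{thm:formal_coformal}, $X$ is coformal and $L$ is Koszul dual to $A$. The generating space $V = \weight 1 A = A^1$ lies in cohomological degree $1$, so via the degree $-1$ pairing its Koszul dual $\weight 1 L$ lies in cohomological degree $0$. As $L$ is generated as a Lie algebra by $\weight 1 L$, and Lie brackets preserve cohomological degree, the entire Lie algebra $L$ is concentrated in degree $0$. Translating back, $\htpygrp{k}(\BKQcompl X) = 0$ for $k \geq 2$, so $X$ is a rational $K(\pi,1)$.

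The only substantive step is unpacking what ``weight equals cohomological degree'' means in terms of Koszul duality; once the pairing bookkeeping is done, the result drops out immediately from the equivalence of formal-plus-coformal with formal-plus-cohomology-Koszul in \cref{thm:formal_coformal}. The main technical point to be careful about is the non-nilpotent case, where one must consistently work with $\BKQcompl X$ and the Harrison-cochain formulation of coformality from \cref{rem:generalization_non_sc}.
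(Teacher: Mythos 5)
Your proof is correct and follows exactly the route the paper indicates: the paper gives no detailed argument but states that \cref{thm:PY} is a special case of \cref{thm:formal_coformal} combined with \cref{rem:generalization_non_sc}, and your degree bookkeeping (the degree $-1$ Hurewicz pairing forcing weight to equal cohomological degree precisely when the Koszul dual Lie algebra is concentrated in degree $0$) is the intended unpacking of that claim. No gaps.
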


\begin{example}
The space $\Conf n {\RR^2}$ is not simply connected. In fact, it is a model for the classifying space of the pure braid group on $n$ strands.
Formality and Koszulness of the cohomology ring then imply that the Bousfield--Kan $\QQ$-completion of $\Conf n {\RR^2}$ is a rational $K(\pi,1)$ and that its fundamental group, which may be identified with the Malcev completion of the pure braid group, is the Malcev group associated to the completion of the Drinfeld--Kohno Lie algebra.
See \cref{rem:generalization_non_sc,thm:PY} as well as \cite[\S8.6]{FelixHalperinThomas15}.
\end{example}

\begin{remark}
      Salvatore \cite{Salvatore20} has shown that $\Conf n {\RR^m}$ is intrinsically formal over any commutative ring provided $n\leq m$, but $\Conf n {\RR^2}$ is not formal over $\F 2$ when $n \ge 4$.
\end{remark}

\subsection{Koszul \texorpdfstring{$\Cinfty$}{C∞}- and \texorpdfstring{$\Linfty$}{L∞}-algebras}

Berglund--Börjeson \cite{BerglundBorjeson20} extended the notion of Koszul algebras to $A_\infty$-algebras and linked it to formality. The following is an adaptation of the results of \cite{BerglundBorjeson20} to $C_\infty$- and $L_\infty$-algebras. It leads to a generalization of the results of \cite{Berglund14bis} discussed in the previous section.

\begin{definition} \label{def:Cinfty_Linfty_Koszul}
  A $\Cinfty$-algebra $A$ is \emph{Koszul} if it is equipped with a weight grading
  \[ A  =  \Dirsum_{w = 1}^\infty \weight w A \]
  such that the structure operations $m_n$ are homogeneous of weight $2 - n$ and such that the cochain complex
  \[ \weight 0 {\Harr * (A)}  \xlongto{b}  \weight 1 {\Harr * (A)}  \xlongto{b}  \dots \]
  is exact, where $\Harr * (A)  =  (\cofreeLie(\shift A), b)$ is weight graded by letting $\shift$ have weight $-1$.
  
  Similarly an $\Linfty$-algebra $L$ is \emph{Koszul} if it is equipped with a weight grading
  \[ L  =  \Dirsum_{w = 1}^\infty \weight w L \]
  such that the structure operations $l_n$ are homogeneous of weight $2-n$ and such that the cochain complex
  \[ \weight 0 {\CE * (L)}  \xlongto{b}  \weight 1 {\CE * (L)}  \xlongto{b}  \dots \]
  is exact, where $\CE * (L) = (\coSA(\shift L), b)$ is weight graded by letting $\shift$ have weight $-1$.
  
  We consider these weight gradings to be cohomological (note that they are, accordingly, written as superscripts).
\end{definition}

In contrast to the preceding subsection, here we allow $A$ and $L$ to have a non-trivial differential.
Basic examples of Koszul $\Linfty$-algebras are provided by free $\Linfty$-algebras.

\begin{example}
  Let $W$ be a graded vector space.
  There is a weight grading on the free $\Linfty$-algebra $\freeLinfty (W)$ determined by letting $l_n$ have weight $2 - n$ and putting $W$ in weight $1$.
  With this weight grading $\freeLinfty (W)$ is Koszul.
\end{example}

\begin{proposition}
Every minimal Koszul $\Linfty$-algebra $L$ is generated in weight $1$, i.e., letting $W= \weight 1 L$, the canonical (strict) morphism of $\Linfty$-algebras
$$\freeLinfty(W) \longto L$$
is surjective.
\end{proposition}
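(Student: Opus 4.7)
The plan is to prove by strong induction on the weight $w \geq 1$ that $\weight w L$ lies in the image of the canonical strict morphism $\varphi \colon \freeLinfty(W) \to L$; equivalently, that every element of $L$ belongs to the sub-$\Linfty$-algebra of $L$ generated by $W = \weight 1 L$. The base case $w = 1$ is immediate, since $\varphi$ restricts to the identity on $W$.

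For the inductive step, fix $x \in \weight w L$ with $w \geq 2$ and view $\shift x$ as an element of the tensor-arity-one summand $\shift L \subseteq \coSA(\shift L) = \CE * (L)$. In the weight grading of $\CE * (L)$ (with $\shift$ of weight $-1$) it lies in $\weight{w-1}{\CE * (L)}$. Since $L$ is minimal, the differential on $\coSA(\shift L)$ is just $b$, and $b$ is built from the operations $l_n$ with $n \geq 2$; since none of these can act on a single tensor factor, $b(\shift x) = 0$.

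Since $w - 1 \geq 1$, the Koszul exactness of $\weight 0 {\CE * (L)} \to \weight 1 {\CE * (L)} \to \cdots$ yields some $c \in \weight{w-2}{\CE * (L)}$ with $b(c) = \shift x$. Decompose $c = \sum_{n \geq 0} c_n$ according to tensor arity, and write each $c_n$ as a sum $\sum_\alpha \shift a_1^\alpha \cdots \shift a_n^\alpha$ with $a_i^\alpha \in L$. The summand of $b$ that uses $l_k$ lowers tensor arity by $k - 1$, so the contribution of $b(c_n)$ to the tensor-arity-one summand $\shift L$ comes solely from applying $l_n$ to all $n$ factors; matching with $\shift x$ yields, up to signs,
\[ x = \sum_{n \geq 2} \sum_\alpha l_n(a_1^\alpha, \ldots, a_n^\alpha). \]

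Finally, since $c_n$ has weight $w - 2$ in $\CE * (L)$, the weights $w_i^\alpha$ of the $a_i^\alpha$ satisfy $w_1^\alpha + \cdots + w_n^\alpha = w - 2 + n$ with each $w_i^\alpha \geq 1$, forcing every $w_i^\alpha \leq w - 1 < w$. By the inductive hypothesis, every such $a_i^\alpha$ lies in the image of $\varphi$, and since this image is closed under the operations $l_n$, the displayed formula places $x$ in the image as well. The main subtlety is in extracting the explicit relation for $x$ from Koszul exactness by projecting onto tensor arity one; once this is done the weight bookkeeping makes the induction immediate.
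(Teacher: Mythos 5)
Your proof is correct, but it takes a genuinely different route from the one in the paper. The paper's argument is short and structural: exactness of the weight complex is repackaged as a quasi-isomorphism of dg coalgebras $C \to \CE*(L)$ from the weight-zero cohomology $C$, to which one applies $\coHarrinfty*(\blank)$ and the counit of the bar--cobar adjunction to obtain weight-homogeneous strict quasi-isomorphisms $\freeLinfty(\shift^{-1}C) = \coHarrinfty*(C) \to \coHarrinfty*(\CE*(L)) \to L$; since $L$ has trivial differential, the composite is surjective, and the source is generated in weight $1$. You instead unwind the same input (exactness in positive weights plus minimality) into an explicit induction on weight: a weight-homogeneous element $x$ of weight $w \ge 2$ gives a cocycle $\shift x$ of positive weight $w-1$ in $\CE*(L)$, hence a coboundary, and projecting the bounding element onto tensor arity one expresses $x$ as a sum of operations $l_n$ applied to elements of strictly smaller weight. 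Your arity and weight bookkeeping is right: only the $l_n$-summand of $b$ applied to all $n$ factors of the arity-$n$ component lands in arity one, and the constraint $\sum_i w_i = w-2+n$ with each $w_i \ge 1$ and $n \ge 2$ forces every $w_i \le w-1$. What the paper's approach buys is brevity and the extra information that the generators can be identified with $\shift^{-1}C$; what yours buys is a self-contained, elementary argument that avoids the $\coHarrinfty$ machinery entirely and produces an explicit formula exhibiting each higher-weight element as a sum of brackets of lower-weight ones.
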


\begin{proof}
By definition of Koszulness, there is a quasi-isomorphism of dg coalgebras $C\to \CE *(L)$, where $C$ denotes the weight zero homology of $\CE *(L)$. We then obtain weight homogeneous strict quasi-isomorphisms of $\Linfty$-algebras
$$\coHarrinfty * (C) \xlongto{\eq} \coHarrinfty * (\CE * (L)) \xlongto{\eq} L.$$
Since $L$ has trivial differential, the composite map must be surjective. Since $\coHarrinfty * (C) = \freeLinfty (s^{-1} C)$ is generated in weight $1$, this shows that $L$ is generated in weight $1$.
\end{proof}

We now turn to the definition of the Koszul dual commutative algebra of a Koszul $\Linfty$-algebra.


\begin{lemma} \label{lemma:freeLinfty_dual}
  If $\shift W$ is dual to $V$, then $\weight 2 {\freeLinfty (W)}$ is dual to $\shift[2] \SA V$.
\end{lemma}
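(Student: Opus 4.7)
The plan is to compute $\weight 2 {\freeLinfty(W)}$ explicitly as a direct sum of corolla-like summands indexed by arity, to dualize each summand using $\dual W = \shift V$, and to identify the result with $\shift[2] \SA V$.

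Since $W$ sits in weight $1$ and $l_n$ has weight $2-n$, a weight-$2$ element of $\freeLinfty(W)$ is necessarily of the form $l_n(w_1, \dots, w_n)$ with $n \geq 2$ and $w_i \in W$; any iterated composition of two or more operations would produce weight $\geq 3$. Taking into account that $l_n$ has degree $n-2$ and is anti-symmetric with respect to the sign action of $\Sigma_n$ (including Koszul signs), I obtain the identification
$$\weight 2 {\freeLinfty(W)}  \cong  \bigoplus_{n \geq 2} \shift[n-2] \bigl( W^{\tensor n} \tensor \operatorname{sgn}_n \bigr)_{\Sigma_n},$$
where $\operatorname{sgn}_n$ denotes the sign representation of $\Sigma_n$, and $\Sigma_n$ acts on $W^{\tensor n}$ by permutation with Koszul signs.

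Next, I would dualize each summand. Over $\QQ$, coinvariants of finite groups coincide with invariants, and dualization converts one to the other. Substituting $\dual W = \shift V$ and applying the standard identification of $\Sigma_n$-representations
$$(\shift V)^{\tensor n}  \cong  \shift[n] V^{\tensor n} \tensor \operatorname{sgn}_n,$$
which accounts for the Koszul signs coming from moving the $n$ copies of the suspension past each other, the two copies of $\operatorname{sgn}_n$ cancel, and the dual of the $n$-th summand simplifies to
$$\shift[2-n]\shift[n] \bigl( V^{\tensor n} \bigr)^{\Sigma_n}  =  \shift[2] \SA[n] V.$$
Summing over $n \geq 2$ then yields $\dual{\weight 2 {\freeLinfty(W)}} \cong \shift[2] \SA V$, as desired.

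I expect the main obstacle to be the careful bookkeeping of suspensions and Koszul signs throughout, in particular the verification of the $\Sigma_n$-equivariant identification $(\shift V)^{\tensor n} \cong \shift[n] V^{\tensor n} \tensor \operatorname{sgn}_n$ and its compatibility with the anti-symmetry of $l_n$. Once the sign conventions are pinned down, the remaining manipulations are routine.
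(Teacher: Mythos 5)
Your proof is correct and follows essentially the same route as the paper's: identify the weight-two part as $\bigoplus_{n} E(n) \otimes_{\Sigma_n} W^{\otimes n}$ with $E(n)$ the sign representation concentrated in degree $n-2$, rewrite this as $s^{-2}\Lambda(sW)$, and dualize (using that invariants and coinvariants agree over $\QQ$ and that the two sign representations cancel) to obtain $s^{2}\Lambda V$. The only cosmetic difference is that your sum starts at $n=2$ while the paper's starts at $n=1$; this merely reflects whether one takes $\Lambda V$ to include its linear (and constant) part, and does not affect the argument.
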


\begin{proof}
Note that $\weight 2 {\freeLinfty (W)}$ is spanned by terms of the form $l_n(w_1, \dots, w_n)$, so we may identify
$$\weight 2 {\freeLinfty (W)} = \Dirsum_{n \ge 1} E(n) \tensor[\Symm{n}] W^{\otimes n},$$
where $E(n)$ is the sign representation of $\Symm{n}$ concentrated in degree $n-2$. The right hand side can be rewritten as
$s^{-2} \Lambda sW$, which is dual to $s^2 \Lambda V$.
\end{proof}


\begin{definition} \label{def:Linfty koszul dual}
  Let $L$ be a Koszul $\Linfty$-algebra, let $W= \weight 1 L$ and consider the short exact sequence in weight 2
  \[ 0  \longto  S  \longto  \weight 2 {\freeLinfty (W)}  \longto  \weight 2 L  \longto  0 \]
  arising from the surjective map $\freeLinfty (W) \to L$.
  We let $V$ be dual to $\shift W$ and denote by $\orth S \subseteq \SA V$ the linear subspace obtained (up to shifts) as the orthogonal complement of $S$ under the pairing of \Cref{lemma:freeLinfty_dual}.
  We define the \emph{Koszul dual algebra of $L$} to be the algebra 
  $$L^! = \SA V / (\orth S)$$
  i.e.\ the quotient of $\SA V$ by the ideal generated by $\orth S$.
\end{definition}

The Koszul dual Lie algebra of a Koszul $\Cinfty$-algebra is defined similarly.

\begin{definition} \label{def:Cinfty koszul dual}
Let $A$ be a Koszul $\Cinfty$-algebra and let $V= \weight 1 A$. The \emph{Koszul dual Lie algebra} is defined by
$$A^! = \freeLie(W)/(\orth R),$$
where $W$ is dual to $sV$ and where $\orth R$ is the orthogonal complement to the kernel $R$ in the exact sequence
\[ 0  \longto  R  \longto  \weight 2 {\freeCinfty (V)}  \longto  \weight 2 A  \longto  0 \]
of graded vector spaces.
\end{definition}

If $L$ is a Koszul $\Linfty$-algebra, then the Koszul dual commutative algebra $L^!$ is exactly the weight zero cohomology of the weight graded cochain algebra $\coCE{*}(L)$, and there is an exact sequence
  \[ \dots  \longto  \coCE{*}(L)_{(1)}  \longto  \coCE{*}(L)_{(0)}  \longto  L^! \longto  0.\]

Similarly, if $A$ is a Koszul $\Cinfty$-algebra, then the Koszul dual Lie algebra $A^!$ is the weight zero cohomology of $\coHarr{*}(A)$.

\begin{theorem}
  Let $X$ be a simply connected space of finite $\QQ$-type.
  \begin{enumerate}
    \item $X$ is formal if and only if the $\Linfty$-algebra $\htpygrp{* + 1}(X) \tensor \QQ$ is Koszul.
    In this case $\rCoho{*}(X; \QQ)$ is the Koszul dual algebra.
    \item $X$ is coformal if and only if the $\Cinfty$-algebra $\rCoho{*}(X; \QQ)$ is Koszul.
    In this case $\htpygrp{* + 1}(X) \tensor \QQ$ is the Koszul dual Lie algebra.
  \end{enumerate}
\end{theorem}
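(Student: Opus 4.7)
The two parts are dual to each other, so I will describe the strategy for (1) in detail; part (2) is handled by the parallel construction using \cref{prop:C-infinity dictionary}, Harrison complexes, and minimal Quillen models in place of \cref{prop:L-infinity dictionary}, Chevalley--Eilenberg complexes, and minimal Sullivan models, with formality replaced by coformality.

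The plan is to translate the problem through the equivalence of \cref{prop:L-infinity dictionary}: the minimal $\Linfty$-structure on $L = \htpygrp{*+1}(X)\tensor\QQ$ corresponds to the minimal Sullivan model $\mathcal{M}_X = (\SA V, d) \cong \coCE{*}(L)$. A weight grading $L = \bigoplus_{w \geq 1} \weight{w}{L}$ with each $l_n$ homogeneous of weight $2 - n$ dualizes to a bigrading $V = \bigoplus_{p \geq 0} \weight{p}{V}$ of the generators of $\mathcal{M}_X$ such that the Sullivan differential strictly decreases weight by one, i.e.\ $d\big(\weight{p}{V}\big) \subseteq \weight{p - 1}{(\SA V)}$. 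Since $d$ then decreases weight while multiplication preserves it, the cohomology $\Coho{*}(\mathcal{M}_X) = \Coho{*}(X; \QQ)$ inherits a weight grading, and the Koszul condition of \cref{def:Cinfty_Linfty_Koszul} is equivalent to this cohomology being concentrated in weight zero.

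For the implication ``$L$ Koszul $\Rightarrow$ $X$ formal'', I would exhibit an explicit cdga quasi-isomorphism $\rho \colon \mathcal{M}_X \longto \Coho{*}(X; \QQ)$. The map is defined by sending $\weight{>0}{V}$ to zero and $\weight{0}{V}$ to its class in $\SA \weight{0}{V}/(d \weight{1}{V})$; that this gives a well-defined cdga morphism reduces to checking that $(d\weight{1}{V})$ is a genuine ideal of $\SA \weight{0}{V}$ (which follows from $d(v \cdot y) = \pm v \cdot dy$ for $v \in \weight{0}{V}$ and $y \in \weight{1}{V}$) and that $\rho \after d$ kills higher-weight generators (immediate since the image of $d$ on $\weight{p}{V}$ lies in positive weight for $p \geq 2$, which $\rho$ annihilates). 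The Koszul condition then identifies the target of $\rho$ with $\Coho{*}(\mathcal{M}_X)$, so $\rho$ is a quasi-isomorphism and $X$ is formal. For the converse, I would invoke Halperin--Stasheff's classical theorem on bigraded minimal models, which states precisely that formality of $X$ is equivalent to the existence of a bigrading on $\mathcal{M}_X$ of the above type; this transports back through the dictionary to the desired Koszul weight grading on $L$.

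The identification $L^! \cong \Coho{*}(X; \QQ)$ in the Koszul case is then immediate from \cref{def:Linfty koszul dual}, since $\SA \weight{0}{V}$ is the free commutative algebra on $\weight{0}{V} \cong \dual{(s\weight{1}{L})}$ and the image of $d$ restricted to $\weight{1}{V}$ cuts out precisely the relations $\orth{S}$ of the Koszul dual. The main obstacle I anticipate is carefully matching the weight conditions on the $l_n$ from \cref{def:Cinfty_Linfty_Koszul} with the bigrading conditions on $d$ in the Halperin--Stasheff sense, and the analogous matching for coformality and the Quillen model in part (2); once this dictionary and its signs are pinned down, each implication reduces either to constructing a projection like $\rho$ or to citing the appropriate bigraded-minimal-model theorem.
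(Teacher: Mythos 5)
Your argument is correct, but it is worth noting that the paper does not actually give a proof here: it simply defers to \cite[Theorem 2.9]{BerglundBorjeson20}, whose argument is phrased in the language of bar constructions and weight-graded $\Ainfty$-(co)algebras. Your route is the more classical one, and it is in fact exactly the one the paper gestures at in the remark immediately following the theorem: a weight grading on $L$ with $l_n$ homogeneous of weight $2-n$ dualizes, under \cref{prop:L-infinity dictionary}, to a Halperin--Stasheff bigrading on $\mathcal M_X = \coCE*(L)$ in which $d$ lowers the lower degree by exactly one, and Koszulness of $L$ becomes the statement that $\Coho*(\mathcal M_X)$ is concentrated in lower degree zero. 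Your explicit projection $\rho$ onto $\SA \weight 0 V/(d\weight 1 V)$ is well defined and a quasi-isomorphism for the reasons you give (everything in $\SA\weight 0 V$ is a cocycle since $d$ lowers weight, $\weight 1 {(\SA V)} = \SA\weight 0 V \cdot \weight 1 V$, and $d(a\,y) = \pm a\,dy$), and the converse is indeed the content of Halperin--Stasheff's uniqueness of the bigraded model for a formal space. What your approach buys is a short, self-contained argument from classical rational homotopy theory; what the cited approach buys is uniform applicability to $\operad P_\infty$-algebras over any Koszul operad (and in particular handles part (2) by literally the same theorem rather than by a ``dual'' argument that has no Halperin--Stasheff reference to lean on in the Quillen-model setting --- there you would have to prove the bigraded-Quillen-model statement yourself, e.g.\ following Neisendorfer). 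Two small points to pin down: the identification $d(\weight 1 V) = \orth S$ uses the finite-type hypothesis to dualize the weight-two exact sequence of \cref{def:Linfty koszul dual}, and one should be slightly careful with the unital versus non-unital bookkeeping when asserting $\SA \weight 0 V/(d\weight 1 V) \iso \rCoho*(X;\QQ)$ rather than $\Coho*(X;\QQ)$.
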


\begin{proof}
See the proof of \cite[Theorem 2.9]{BerglundBorjeson20}.
\end{proof}

\begin{remark}
  That $L = \htpygrp{* + 1}(X)\tensor \QQ$ is Koszul implies that the minimal model $\coCE{*}(L)$ has an extra homological grading such that the chain complex
  \begin{equation} \label{eq:bigraded model}
  \dots  \longto  \coCE{*}(L)_{(1)}  \longto  \coCE{*}(L)_{(0)}  \longto  \rCoho * (X; \QQ)  \longto  0
  \end{equation}
  is exact.
  This recovers the ``bigraded model'' of Halperin--Stasheff \cite{HalperinStasheff79}.
\end{remark}






\begin{example} \label{ex:noncoformal} 
Complex projective space $\CP{n}$ is formal, for example since it is a compact Kähler manifold, but the cohomology algebra
    $$\Coho * (\CP n; \QQ) \iso \QQ[x] / (x^{n+1})$$
    is not Koszul for $n>1$, so $\CP{n}$ is not coformal by \cref{thm:formal_coformal}.
    The homotopy $\Linfty$-algebra may be presented as
    $$\htpygrp{* + 1}(\CP n) \tensor \QQ  \iso  \linspan{\alpha, \beta},$$
    where $|\alpha|=1$, $|\beta| = 2n$ and the only non-trivial operation is given by
    $$l_{n+1}(\alpha, \dots, \alpha) = (n+1)!\beta.$$
    This can be seen by observing that the minimal Sullivan model is given by
    $$
\big(\Lambda (x,y), d \big),\quad dx= 0, \,\, dy= x^{n+1},\quad |x|=2,\,\, |y| = 2n+1,
$$
  and then taking the corresponding $\Linfty$-algebra as in \cref{prop:L-infinity dictionary}. Alternatively, one can use the homotopy fiber sequence
  $$\Sphere 1 \longto \Sphere{2n+1} \longto \CP{n}$$
  to argue that $\pi_*(\CP{n})\otimes \QQ$ is spanned by the homotopy classes of the evident maps
    \begin{align*}
      \alpha &\colon \Sphere 2 \iso \CP 1 \longto \CP n, \\
      \pi &\colon \Sphere {2n+1} \longto \CP n,
    \end{align*}
    and then observe, following Porter \cite{Porter67}, that $(n+1)!\pi$ may be identified with the Whitehead product $\langle \alpha, \dots, \alpha \rangle$ of order $n+1$. Using the connection between higher order Whitehead products and the $\Linfty$-algebra structure on $\htpygrp{*+1}(X)\otimes \QQ$ (see \cref{prop:whitehead vs Linfty}), this implies the result.
    
    The homotopy $\Linfty$-algebra is Koszul if $\alpha$ and $\beta$ are assigned weight $1$ and $2$, respectively. The sequence \eqref{eq:bigraded model} assumes the form
    $$0 \longto \QQ[x] y \xlongto{d} \QQ[x] \longto \QQ[x]/(x^{n+1}) \longto 0$$
    in this case.
    
    The Koszul duality between the homotopy $\Linfty$-algebra and the cohomology algebra, as expressed in \cref{def:Linfty koszul dual}, takes the following form: in weight $2$, we have the exact sequence
    \[ 0  \longto  S \longto \weight 2 {\freeLinfty (\alpha)}  \longto  \linspan \beta  \longto  0 \]
    where $S$ is spanned by $l_k(\alpha, \dots, \alpha)$ for all $k\geq 2$ except $k=n+1$.
    Since we have that
    $$\langle l_k(\alpha,\ldots,\alpha),x^m \rangle \ne 0$$
    if and only if $k=m$, we see that $\orth S = \linspan{x^{n+1}}$ and thus
    $$\Coho * (\CP n; \QQ)\iso \big(\htpygrp{*+1}(\CP{n})\otimes \QQ \big)^! = \QQ[x] / (x^{n+1}),$$
    as expected.
\end{example}    
    
\begin{example} \label{ex:nonformal}    
Let $n\geq 2$ and let
    \begin{equation} \label{eq:sphere bundle}
    \Sphere{2n-1} \longrightarrow M \longrightarrow \Sphere n \times \Sphere n
    \end{equation}
    be a smooth sphere bundle with non-trivial Euler class
    $$0\ne e\in \Coho{2n}(\Sphere n \times \Sphere n;\QQ).$$
    For example, we can take $M$ to be the sphere bundle associated to the pullback $c^*(\mathrm T \Sphere {2n})$ of the tangent bundle of $\Sphere{2n}$ along the collapse map
    $$c\colon \Sphere {n} \times \Sphere {n} \longto \Sphere{2n}.$$
    The total space $M$ is an $(n-1)$-connected closed manifold of dimension $4n-1$ and, as we will see, it is coformal but not formal.


    The long exact sequence of rational homotopy groups associated to the fibration \eqref{eq:sphere bundle} is easily seen to split, which yields a decomposition
    \begin{align*}
      \htpygrp{* + 1}(M) \tensor \QQ  & \iso \big(\htpygrp{* + 1}(\Sphere{2n-1})\tensor \QQ\big) \oplus \big(\htpygrp{*+1}(\Sphere{n}\times \Sphere{n})\tensor \QQ\big).
    \end{align*}
    The left summand is one-dimensional, spanned by a class $\gamma$ in degree $2n-2$, and the right summand is spanned by classes $\alpha$ and $\beta$ in degree $n-1$ and, if $n$ is even, by $[\alpha,\alpha]$ and $[\beta,\beta]$ in degree $2n-2$. There is no room for non-trivial $\Linfty$-operations $l_k$ for $k\geq 3$, so $M$ is necessarily coformal. The only possible non-trivial Lie bracket not already accounted for is $[\alpha,\beta]$. Since it vanishes in $\pi_{*+1}(S^n \times S^n)$ we must have $[\alpha,\beta] = a\gamma$ for some $a\in \QQ$ and one can check that $a\ne 0$ since the Euler class is non-trivial. This implies that the homotopy Lie algebra admits the following cubic presentation:
    $$\htpygrp{*+1}(M)\tensor \QQ \cong \LL(\alpha,\beta)/\left([\alpha,[\alpha,\beta]], [\beta,[\alpha,\beta]]\right).
    $$
    It is easily seen that this graded Lie algebra does not admit any quadratic presentation, so it cannot be Koszul. Since $M$ is coformal, this implies that $M$ is not formal by \cref{thm:formal_coformal}.
    
    We obtain the minimal Sullivan model by applying Chevalley--Eilenberg co\-chains. For a suitable choice of generators, it assumes the form
    $$\big(\Lambda(x,y,z),d\big),\quad dx=dy = 0,\,\, dz =xy,$$
    where $|x|=|y| = n$ and $|z|=2n-1$, if $n$ is odd. If $n$ is even, it assumes the form
    $$\big(\Lambda(x,y,s,t,z),d\big),\quad dx=dy = 0,\,\, ds=x^2,\,\, dt=y^2,\,\,dz =xy,$$
    where $|x|=|y| = n$ and $|s|=|t|=|z|=2n-1$.
    
    The cohomology $\Cinfty$-algebra of $M$ is easily computed from the Sullivan model by using the homotopy transfer theorem (cf.~\cite[Corollary 4.12]{BerglundBorjeson20}). For a suitable choice of basis it is given by
    $$\Coho * (M; \QQ) \iso \linspan{x,y,u,v,w},$$
    where $|x|=|y|=n$, $|u|=|v|=3n-1$, $|w|=4n-1$, and the only non-trivial $\Cinfty$-operations are given by
    \begin{gather*}
    xv = yu = w, \\
    m_3(x,x,y) = - m_3(y,x,x) = - u, \\
    m_3(x,y,y) = -m_3(x,y,y) = v.
    \end{gather*}
    This is a Koszul $\Cinfty$-algebra if $x,y$ are assigned weight $1$, the classes $u,v$ are assigned weight $2$ and $w$ is assigned weight $3$.
\end{example}


\section{Lecture 3: Graph complexes and automorphisms of manifolds}

In this lecture, we will discuss another type of higher structure in rational homotopy theory. It arises in the study of the rational cohomology of automorphisms of high dimensional manifolds and was discovered by Berglund--Madsen \cite{BerglundMadsen20}. The higher structure in question is the homology of a certain graph complex in the sense of Kontsevich \cite{Kontsevich93,Kontsevich94}.

We begin by reviewing a few results, some classical and some more recent, on the rational homotopy theory of classifying spaces of homotopy automorphisms of simply connected CW-complexes. Then we review some basics about \emph{modular operads}, which is an efficient tool for handling graph complexes.
Finally, we discuss the results of Berglund--Madsen \cite{BerglundMadsen20}.

\subsection{Classifying spaces of homotopy automorphisms}
Let $X$ be a finite CW-complex and denote by
$$\aut(X)$$
the topological monoid of homotopy equivalences from $X$ to itself, equipped with the compact-open topology. Let $\B \aut(X)$ be its classifying space, defined, for example, using the geometric bar construction \cite{May75}.
This space classifies fibrations with fiber $X$. More precisely, for every CW-complex $B$, there is a natural bijection
\[ \big[B, \B \aut(X)\big]  \iso  \left\{ \pi \colon E \to B \text{ fibration} \mid \inv\pi(b) \eq X \text{ for all } b \in B \right\} / {\sim} \]
where the right-hand side denotes the set of fiber homotopy equivalence classes of fibrations over $B$ whose fiber is homotopy equivalent to $X$, cf.~\cite{Stasheff63bis,May75}. 
In particular, this means that the cohomology ring $\Coho * (\B \aut(X))$ may be identified with the ring of characteristic classes of fibrations with fiber $X$. It is a fundamental problem in homotopy theory to compute this ring.




To compute it rationally, one may attempt to use tools from rational homotopy theory. An issue is that the space $\B \aut(X)$ is in general not nilpotent, so the standard methods are not directly applicable, but one can study the homotopy fiber sequence
\[ \trunc 1 {\B \aut(X)}  \longto  \B \aut(X)  \longto  \B \heq{X}.\]
Here, the base is the classifying space of the discrete group
$$\heq{X} \defeq \htpygrp{1} (\B\aut(X)) = \htpygrp{0} (\aut(X))$$
of homotopy classes of self--homotopy equivalences of the space $X$.
The fiber $\trunc 1 {\B \aut(X)}$ is the $1$-connected cover of $\B \aut(X)$, i.e.\ the unique (up to weak homotopy equivalence) simply connected space admitting a map to $\B \aut(X)$ that induces an isomorphism on $\htpygrp{k}(-)$ for $k > 1$.\footnote{The universal cover, when it exists, is a model for the $1$-connected cover.} Since $\trunc 1 {\B \aut(X)}$ is simply connected, the standard methods of rational homotopy theory are in principle applicable, and in fact tractable Lie models for this space can be written down (see \cref{thm:Baut model} below).

The following result, due independently to Sullivan \cite{Sullivan77} and Wilkerson \cite{Wilkerson76}, puts strong constraints on the group $\heq{X}$ when $X$ is a simply connected finite CW-complex. This was one of the primary applications of Sullivan's rational homotopy theory.

\begin{theorem}[Sullivan, Wilkerson] \label{thm:Sullivan-Wilkerson}
If $X$ is a simply connected finite CW-complex, then $\heq{X}$ is an arithmetic group.
\end{theorem}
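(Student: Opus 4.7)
The plan is to compare $\heq{X}$ with an automorphism group of rational algebraic data attached to $X$. Let $\mathcal M_X = (\SA V, d)$ be the minimal Sullivan model of $X$; since $X$ is a simply connected finite CW-complex, each $V^k$ is finite-dimensional, and up to homotopy one may truncate $\mathcal M_X$ at a finite Postnikov stage (using that $X$ has finite dimension), replacing $V$ by a finite-dimensional graded vector space without losing information about $\heq{X_\QQ}$. First I would identify $\heq{X_\QQ}$, the group of homotopy classes of self-equivalences of the rationalization $X_\QQ$, with $\mathrm{Aut}(\mathcal M_X)/{\simeq}$, the group of dg-algebra automorphisms of $\mathcal M_X$ modulo dg-algebra homotopy. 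This is a consequence of the Sullivan--de Rham equivalence between simply connected rational spaces of finite type and minimal Sullivan algebras, which is already packaged in \cref{prop:L-infinity dictionary} together with the spatial realization machinery reviewed above.

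Next I would show that $\mathrm{Aut}(\mathcal M_X)/{\simeq}$ is the group of $\QQ$-points of a linear algebraic group $G$ defined over $\QQ$. The group $\mathrm{Aut}(\SA V)$ acts on the (finite-dimensional) truncation of $V$ by filtration-preserving linear maps, so it embeds into a product of finite-dimensional $\mathrm{GL}(V^k)$. Commutation with $d$ is a polynomial condition on matrix entries, so $\mathrm{Aut}(\mathcal M_X)$ is a Zariski-closed subgroup, hence algebraic. The homotopy relation is then implemented by an action of a unipotent normal subgroup (automorphisms of the form $\id + d\theta + \theta d$ for a derivation $\theta$ of degree $+1$, unipotent thanks to the nilpotence condition in the Sullivan filtration), and the quotient of an algebraic group by such a subgroup remains algebraic.

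The last step is to analyze the rationalization map $\Phi \colon \heq{X} \to G(\QQ) = \heq{X_\QQ}$. The kernel is finite by a classical obstruction-theoretic argument: a self-equivalence inducing the identity on $X_\QQ$ is pinned down, up to finitely many choices, by its action on the torsion parts of the (finitely generated) groups $\htpygrp{k}(X)$ through the relevant Postnikov stages. The image lies in an arithmetic subgroup $G(\ZZ) \subseteq G(\QQ)$: an integral structure on $G$ is supplied by the integral cohomology lattice $H^*(X;\ZZ)$ together with the integral $k$-invariants of the Postnikov tower of $X$, all of which are preserved by any honest self-homotopy-equivalence. Combining finite kernel and arithmetic image, $\heq{X}$ is a finite-by-arithmetic group, hence arithmetic.

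The main obstacle will be constructing a canonical $\ZZ$-form on the algebraic group $G$ from the integral homotopy type of $X$, since $\mathcal M_X$ is a purely rational object. Following Wilkerson's inductive strategy, this is most naturally carried out along the Postnikov tower of $X$: at each stage $X^{(n-1)} \to X^{(n)}$, one obtains a short exact sequence relating $\heq{X^{(n)}}$ to $\heq{X^{(n-1)}}$, controlled by the action on $H^{n+1}(X^{(n-1)}; \htpygrp{n}(X))$ in which the $k$-invariant lives; one then needs to check that arithmeticity is preserved under these extensions. Packaging the Postnikov-tower inductions so that the arithmetic structure at each stage assembles into a single arithmetic structure on $G$ (rather than on a pro-algebraic tower) is the technical crux of the argument.
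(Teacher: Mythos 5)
The paper offers no proof of this theorem --- it is stated with citations to Sullivan and Wilkerson --- so your proposal can only be measured against those original arguments, and in broad strokes it does follow their strategy: identify $\heq{X_\QQ}$ with homotopy classes of automorphisms of a finite-dimensional truncation of the minimal model, exhibit this as the $\QQ$-points of a linear algebraic group (automorphisms modulo the unipotent normal subgroup of those homotopic to the identity), and compare $\heq{X}$ with these $\QQ$-points via a map with finite kernel. Two intermediate claims are imprecise but fixable. An automorphism of $(\SA V,d)$ need not preserve $V\subseteq \SA V$, only the word-length filtration, so $\mathrm{Aut}(\mathcal M_X)$ does not embed into $\prod_k \mathrm{GL}(V^k)$; rather it embeds into $\mathrm{GL}$ of a finite-dimensional truncation of $\SA V$, and is an extension of a subgroup of $\prod_k \mathrm{GL}(V^k)$ (acting on indecomposables) by a unipotent group. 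And to know that the $\QQ$-points of the quotient by the unipotent normal subgroup $U$ are the quotient of the $\QQ$-points, one should invoke $H^1(\QQ,U)=0$ in characteristic zero.

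The genuine gap is in the last step. You argue that the image of $\Phi\colon \heq{X}\to G(\QQ)$ is \emph{contained in} the stabilizer of an integral structure, and then conclude that the image is arithmetic. Containment in an arithmetic group is far from arithmeticity: a subgroup of $G(\ZZ)$ can have infinite index. One must show the image is \emph{commensurable} with the lattice stabilizer, i.e.\ prove a realization statement: every rational self-equivalence preserving the integral lattices of homotopy/homology and the integral $k$-invariants (at least after passing to a congruence subgroup) is induced by an honest self-homotopy-equivalence of the finite complex $X$. This finite-index surjectivity is the heart of both published proofs --- Sullivan obtains it from the arithmetic fracture square comparing with the profinite completion, Wilkerson from the Postnikov induction using that the obstruction groups $H^{n+1}(X^{(n-1)};\htpygrp{n}(X))$ are finitely generated --- and your sketch never addresses it; your closing paragraph identifies the Postnikov induction as the crux but locates the difficulty in constructing the $\ZZ$-form rather than in hitting a finite-index subgroup of its stabilizer. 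Finally, ``finite-by-arithmetic, hence arithmetic'' is immediate only under the convention (which Sullivan adopts) that $\Gamma$ is arithmetic when it admits a homomorphism to $G(\QQ)$ with finite kernel and image commensurable with $G(\ZZ)$; under a stricter definition that closure property itself requires an argument.
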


Typical examples of arithmetic groups are the $\ZZ$-points in linear algebraic groups defined over $\QQ$, such as $\mathrm{GL}_n(\ZZ)$ or $\mathrm{SL}_n(\ZZ)$. See \cite{Serre79} for an introduction.

\begin{example} \label{ex:heq of wedge}
It is an exercise\footnote{In fact, this is \cite[\S 4.A, Exercises 3 and 4]{Hatcher02}.} to check that
  \[
  \heq{\textstyle \bigvee^n \Sphere k}
  \iso
  \begin{cases}
    \Out(\freegroup{n}), & k = 1, \\
    \mathrm{GL}_n(\ZZ), & k > 1.
  \end{cases}
  \]
  The group $\mathrm{Out}(\freegroup{n})$ of outer automorphisms of a free group on $n$ generators is finitely presented, but it is not arithmetic when $n \ge 3$ (cf.~\cite[\S2.8.1]{Vogtmann02}).
\end{example}


\begin{remark}
Arithmetic groups $\Gamma$ are known to be of \emph {finite type} in the sense that $\B\Gamma$ has the homotopy type of a CW-complex with finitely many cells in each dimension. Groups of finite type are in particular finitely presented.

Dror--Dwyer--Kan have shown that $\heq{X}$ is of finite type whenever $X$ is virtually nilpotent \cite{DrorDwyerKan81} (they do not establish the stronger property of arithmeticity in this generality). If $X$ is not virtually nilpotent, then $\heq{X}$ is not necessarily of finite type. An example where $\heq{X}$ is not even finitely generated is given by
  $$X = S^1 \vee S^2 \vee S^3,$$
  see \cite{FrankKahn77}.
\end{remark}

Rational models for the simply connected cover of $\B\aut(X)$ are known.
We here review a version for $\B\paut(X)$, where $\aut_*(X)$ denotes the topological monoid of pointed self--homotopy equivalences of $X$.

For a dg Lie algebra $L$ with differential $\delta$, we let $\Der (L)$ denote the dg Lie algebra of derivations of $L$. Its elements of degree $n$ are maps $\theta\colon L \to L$ of degree $n$ that satisfy
$$\theta[x,y] = [\theta(x),y] + (-1)^{n|x|}[x,\theta(y)]$$
for all $x,y\in L$. The commutator
$$[\theta,\eta] = \theta \circ \eta - (-1)^{|\theta||\eta|}\eta \circ \theta$$
makes $\Der (L)$ into a graded Lie algebra, and equipped with the differential $[\delta,-]$, it becomes a dg Lie algebra. The truncation $\trunc 1 {\Der (L)}$ is defined as the subalgebra which is zero in non-positive degrees, agrees with $\Der (L)$ in degrees $>1$ and consists of the derivations $\theta$ such that $[\delta,\theta] = 0$ in degree $1$.

The following is due to Tanré \cite[Corollaire VII.4.(4)]{Tanre83} and Schlessinger--Stasheff \cite{SchlessingerStasheff12}.

\begin{theorem} \label{thm:Baut model}
Let $X$ be a simply connected finite CW-complex.
  The simply connected cover $\trunc 1 {\B \paut(X)}$ has Lie model $\trunc 1 {\Der (\LL_X)}$, where $\LL_X$ is the minimal Quillen model of $X$.
\end{theorem}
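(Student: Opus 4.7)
The plan is to identify $\trunc 1 {\B \paut(X)}$ rationally with a classifying construction applied to the dg Lie algebra $\trunc 1 {\Der \LL_X}$, via Quillen's equivalence and an exponential argument for derivations. The first step is to replace the topological monoid $\paut(X)$ by a simplicial model built from $\LL_X$: write $G(\LL_X)$ for a simplicial enrichment of the monoid of dg Lie self-quasi-isomorphisms of $\LL_X$ (obtained, for instance, by tensoring with Sullivan's polynomial de Rham forms on simplices). Since a pointed self-map $f\colon X \to X$ is a rational equivalence if and only if $\lambda(f)$ is a quasi-isomorphism of $\LL_X$, Quillen's equivalence of homotopy categories can be refined to a rational equivalence between the identity components of $\paut(X)$ and $G(\LL_X)$.

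Next, I would compute the rational homotopy of $G(\LL_X)$ in terms of derivations. Since $\LL_X = (\freeLie V, \delta)$ is minimal, a derivation of positive degree is locally nilpotent on each homogeneous piece, so the exponential $\exp(\theta) = \sum_{k \ge 0} \theta^k / k!$ converges for every closed derivation $\theta$ of positive degree and yields a genuine dg Lie automorphism of $\LL_X$. By an obstruction-theoretic argument (cf.~\cite[\S VII]{Tanre83}), this exponential fits into a weak equivalence between the identity component of $G(\LL_X)$ and a simplicial group whose rational homotopy Lie algebra in positive degrees is $H_*(\Der \LL_X)$. Delooping, together with $\htpygrp{n+1}(\B\paut(X)) \iso \htpygrp{n}(\paut(X))$, then yields
\[\htpygrp{n+1}\bigl(\trunc 1 {\B \paut(X)}\bigr) \tensor \QQ \iso H_n(\Der \LL_X)\]
for all $n \ge 1$, matching what the candidate model $\trunc 1 {\Der \LL_X}$ predicts on rational homotopy groups.

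To upgrade this to an identification of Lie models, I would match brackets: the Samelson bracket on $\htpygrp{*}(\paut(X)) \tensor \QQ$ corresponds, under the exponential, to the graded commutator of derivations, which is precisely the bracket defining the dg Lie structure on $\Der \LL_X$. The main obstacle is the treatment of low degrees. The truncation $\trunc 1 {\Der \LL_X}$ discards everything in non-positive degrees and retains only closed derivations in degree $1$, mirroring the fact that $\trunc 1 {\B \paut(X)}$ drops $\htpygrp{1}(\B\paut(X)) = \heq{X}$ (an arithmetic group by \cref{thm:Sullivan-Wilkerson}) while keeping its simply connected cover. Verifying that no spurious obstructions arise from non-closed degree-$1$ derivations, and that the simplicial enhancement $G(\LL_X)$ is insensitive to them, is the most delicate step.
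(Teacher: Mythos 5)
The paper does not prove this theorem; it is quoted from Tanr\'e \cite[Corollaire VII.4.(4)]{Tanre83} and Schlessinger--Stasheff \cite{SchlessingerStasheff12}, so there is no in-text proof to compare against. Judged on its own terms, your outline follows the right general strategy (pass to a simplicial monoid of self-quasi-isomorphisms of $\LL_X$, exponentiate derivations, deloop), and the computation $\htpygrp{n+1}\bigl(\trunc 1 {\B\paut(X)}\bigr)\tensor\QQ \iso \Ho{n}(\Der \LL_X)$ together with the compatibility of Samelson brackets and commutators is correct and classical.

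However, the final step is a genuine gap: matching the rational homotopy groups and the Samelson bracket with $\Ho{*}(\Der \LL_X)$ and its commutator bracket does \emph{not} establish that $\trunc 1 {\Der \LL_X}$ is a Lie \emph{model} for $\trunc 1 {\B\paut(X)}$. A Lie model is a dg Lie algebra quasi-isomorphic to the Quillen model, i.e.\ it must encode the full rational homotopy type; an isomorphism of graded Lie algebras on homology only pins down the model when the space in question is coformal, and $\B\paut(X)$ is not coformal in general (the higher operations $l_n$, equivalently the higher Whitehead products in $\htpygrp{*}(\B\paut(X))\tensor\QQ$, are not determined by the binary bracket). To close this, you must produce an actual zig-zag of dg Lie algebra quasi-isomorphisms (or an $\Linfty$-quasi-isomorphism) between $\trunc 1 {\Der \LL_X}$ and a Quillen model of $\trunc 1 {\B\paut(X)}$. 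This is exactly what the cited proofs do: Tanr\'e and Schlessinger--Stasheff work at the level of models throughout, e.g.\ by exhibiting a simplicial group built from the nerve $\MC_\bullet$ of $\trunc 1 {\Der \LL_X}$ (or a classifying construction for ``fibrations of models'') as a model for the identity component of $\paut(X_\QQ)$, so that the full higher structure is carried along automatically rather than reconstructed from $\htpygrp{*}$ and the bracket. Relatedly, your first step --- refining Quillen's equivalence of homotopy categories to a rational equivalence of automorphism monoids, and the fact that $\trunc 1{\B\paut(X)} \to \trunc 1{\B\paut(X_\QQ)}$ is a rational equivalence --- is true but is itself a substantive input that needs justification rather than a formal consequence.
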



Let us briefly mention some recent generalizations and extensions of this result:
  \begin{itemize}
      \item Berglund--Saleh \cite{BerglundSaleh20} construct Lie models for $\trunc 1 {\B \aut_A(X)}$, where $\aut_A(X)$ is the topological monoid of homotopy automorphisms of $X$ that restrict to the identity on a given non-empty subcomplex $A \subseteq X$.
    \item Félix--Fuentes--Murillo \cite{FFM21} construct  Lie models for nilpotent covers $\B \aut_G(X) \to \B \aut(X)$ associated to subgroups $G\subseteq \pi_0(\aut_*(X))$ that act nilpotently on $\Ho{*}(X)$ (that such covers are nilpotent is a result due to Dror--Zabrodsky \cite{DrorZabrodsky79}).
    
    \item Berglund--Zeman \cite{BerglundZeman21} incorporate the action of the deck transformation group $\Gamma$ into Lie models $\lie g$ for certain nilpotent covers of the form $\B \aut_G(X)$ and use this to express the rational homotopy type of $\B \aut(X)$ as a homotopy orbit space $\MC[\bullet](\lie g)_{h \Gamma}$.
    
  \end{itemize}
Next, let $M$ be a simply connected compact $m$-dimensional manifold with boundary $\bdry M = \Sphere {m-1}$. We will let
$$\bdryaut(M)$$
denote the topological monoid of self--homotopy equivalences of $M$ that restrict to the identity on the boundary. Fix a minimal Quillen model $\LL_M$ for $M$. There is a distinguished cycle $\omega \in \LL_M$ of degree $m-2$ that represents the inclusion $\Sphere {m-1} = \bdry M \to M$ under the isomorphism
$$\htpygrp{m-1}(M) \tensor \QQ \iso \Ho{m-2}(\LL_M).$$
Let $\Der[\omega] (\LL_M)$ denote the dg Lie algebra of derivations $\theta \colon \LL_M \to  \LL_M$ such that $\theta(\omega) = 0$.
Note that $\Der[\omega] (\LL_M)$ is closed under the differential $[\delta,-]$ since $\delta(\omega) = 0$.
The following theorem is proven in \cite[Theorem 3.12]{BerglundMadsen20}. (It simplifies the model provided by \cite{BerglundSaleh20}.)

\begin{theorem}[Berglund--Madsen] \label{thm:bdryaut_Lie_model}
Let $M$ be a simply connected compact $m$-manifold with boundary $\partial M = S^{m-1}$. The simply connected cover $\trunc 1 {\B \bdryaut(M)}$ has Lie model $\trunc 1 {\Der[\omega] (\LL_M)}$.

\end{theorem}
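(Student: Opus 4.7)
The plan is to deduce this theorem by specializing the general Lie model for classifying spaces of relative homotopy automorphisms due to Berglund--Saleh~\cite{BerglundSaleh20}, which the excerpt explicitly singles out as the ancestor of this simpler statement.

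First, I would choose a relative minimal Quillen model for the inclusion $\partial M \hookrightarrow M$. Since $\partial M = \Sphere{m-1}$ admits the abelian dg Lie algebra $\LL(u)$ with $|u| = m-2$ and zero differential as its minimal Quillen model, a relative model takes the form of an inclusion of dg Lie algebras $\iota \colon \LL(u) \hookrightarrow \LL_M$. The image $\iota(u)$ must be a cycle representing the homotopy class of $\partial M \hookrightarrow M$ under the isomorphism $\htpygrp{m-1}(M) \tensor \QQ \cong \Ho{m-2}(\LL_M)$. By the very definition of $\omega$, we may arrange that $\iota(u) = \omega$.

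Second, I would apply the Berglund--Saleh theorem with $X = M$ and $A = \partial M$. This provides a Lie model for $\trunc 1 {\B \bdryaut(M)}$ in terms of derivations of the relative Quillen model that vanish on the submodel associated to $A$. Since $\LL(u)$ is generated as a Lie algebra by the single element $u$, the condition that a derivation $\theta$ of $\LL_M$ vanish on $\iota(\LL(u))$ is equivalent to $\theta(\omega) = 0$. Hence the derivation dg Lie algebra in question is precisely $\Der[\omega] \LL_M$, and after positive truncation one recovers $\trunc 1 {\Der[\omega] \LL_M}$ as the desired model, in analogy with \cref{thm:Baut model}.

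The main obstacle lies in verifying that the general Berglund--Saleh construction really simplifies this cleanly in our setting. In general the relative Lie model carries a non-trivial differential on both the submodel and the ambient model, and the derivation complex inherits a correspondingly twisted differential. The crucial simplification here is that $\LL(u)$ carries the zero differential and is freely generated by a single class, so that the subspace $\Der[\omega] \LL_M \subseteq \Der \LL_M$ is automatically closed under the commutator $[\delta, -]$, since $\delta(\omega) = 0$. Once one checks that no further correction terms appear and that the truncation interacts correctly with passage to the $\omega$-annihilating subalgebra, the theorem reduces to an immediate invocation of~\cite{BerglundSaleh20}.
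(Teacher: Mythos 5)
The paper does not actually prove this statement; it records it as \cite[Theorem 3.12]{BerglundMadsen20} and explicitly remarks that the point of that theorem is to \emph{simplify} the model coming from \cite{BerglundSaleh20}. Your proposal attempts to run that simplification, but it contains a genuine gap at exactly the step that constitutes the content of the theorem. You assert that a relative minimal Quillen model of $\partial M \hookrightarrow M$ can be taken to be an inclusion of dg Lie algebras $\iota\colon \freeLie(u) \hookrightarrow \LL_M$ with $\iota(u)=\omega$. But the Berglund--Saleh model requires a \emph{cofibration} (free extension) $\freeLie(u) \rightarrowtail \mathcal L$ together with a quasi-isomorphism $\mathcal L \to \LL_M$, and the map $u \mapsto \omega$ into the minimal model $\LL_M$ is essentially never such a cofibration: the cycle $\omega$ representing the boundary inclusion is typically \emph{decomposable} in $\LL_M$ (for $W_{g,1}$ one has $\omega = [\alpha_1,\beta_1]+\dots+[\alpha_g,\beta_g]$), whereas in a free extension $u$ would have to remain a free generator. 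Consequently the relative cofibrant model has extra generators (a generator $u$ together with elements killing $u - \omega$ in homology, etc.), and the Berglund--Saleh output is the dg Lie algebra of derivations of \emph{that} larger, non-minimal model which vanish on $u$ --- not the derivations of $\LL_M$ annihilating $\omega$. Comparing these two derivation dg Lie algebras, i.e.\ showing that the positive truncation of the former is quasi-isomorphic to $\trunc 1 {\Der[\omega] \LL_M}$, is precisely the nontrivial step that \cite[Theorem 3.12]{BerglundMadsen20} supplies, and it is the step your proof dismisses with ``once one checks that no further correction terms appear''.

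Two smaller points. First, your closing observation that $\Der[\omega] \LL_M$ is closed under $[\delta,-]$ because $\delta(\omega)=0$ is correct (and is stated in the paper just before the theorem), but it only shows that the candidate model is a well-defined dg Lie algebra; it does not address the comparison above. Second, $\freeLie(u)$ with $|u|=m-2$ is the free graded Lie algebra on one generator and is not abelian when $m-2$ is odd (it then contains the nonzero element $[u,u]$, reflecting $\pi_{2m-3}(S^{m-1})\otimes\QQ \neq 0$ for $m-1$ even); this does not affect your argument but the word ``abelian'' should be dropped. To repair the proof you would need to either reproduce the homotopy-invariance/comparison argument for derivation Lie algebras under quasi-isomorphisms of (non-minimal) Quillen models from \cite{BerglundMadsen20}, or cite that result directly rather than \cite{BerglundSaleh20}.
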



\begin{example} \label{ex:W_g}
  Consider the $2d$-dimensional manifold
  $$W_g = \Conn_g \bigl( \Sphere d \times \Sphere d \bigr) $$
  and let $W_{g,1}$ denote the result of removing the interior of an embedded disk $\Disk{2g} \subset W_g$.
  For $d = 1$, the manifold $W_{g,1}$ is an orientable surface of genus $g$ and one boundary component.
  Now assume that $d > 1$.
  Then $W_{g,1} \eq \bigvee^{2g} \Sphere {d}$ has Lie model $\freeLie V_g$ with trivial differential $\delta = 0$, where $V_g$ is the graded vector space $s^{-1}\rHo{*}(W_{g,1})$, which is concentrated in degree $d-1$.
  For a suitable choice of basis $\alpha_1, \beta_1, \dots, \alpha_g, \beta_g$ for $V_g$, the element $\omega$ assumes the form
  $$\omega = [\alpha_1, \beta_1] + \dots + [\alpha_g, \beta_g].$$
  It follows that $\trunc 1 {\B \bdryaut(W_{g,1})}$ has Lie model 
  $\trunc 1 {\Der[\omega] (\LL V_g)}$. The differential $[\delta,-]$ is trivial, so in particular $\trunc 1 {\B \bdryaut(W_{g,1})}$ is coformal.
\end{example}

\subsection{Graph complexes and modular operads} 
Graph complexes, invented by Kontsevich \cite{Kontsevich93}, were a major impetus for the development of the theory of algebraic operads and Koszul duality for operads.
A spectacular application is that the cohomology of various groups of central importance to geometric topology, such as mapping class groups of surfaces or automorphism groups of free groups, can be expressed in terms of graph homology (see \cref{thm:kckv} below for a precise statement of one such result). However, the computation of graph homology is a difficult problem the complete solution of which remains an open problem.

Modular operads and the Feynman transform of Getzler--Kapranov \cite{GetzlerKapranov98} provide a convenient framework for handling Kontsevich's graph complexes and generalizations thereof. The Feynman transform may be thought of as a (dualized) bar construction for modular operads. To define modular operads, we first need to review cyclic operads \cite{GetzlerKapranov95}.

\begin{definition}
  A \emph{cyclic operad} is an operad $\operad P$ together with an extension of the action of $\Symm n$ on $\operad P (n)$ to an action of $\Symm{n+} \defeq \Bij(\{0, \dots, n\})$ (one should think of this as also permuting the output with the inputs) such that
  \[ (a \circ_m b) \act t_{m + n - 1}  =  (a \act t_m) \circ_1 (b \act t_n) \]
  holds for all $a \in \operad P(m)$ and $b \in \operad P(n)$.
  Here $t_k$ denotes the permutation $(0 1 \dots k) \in \Symm{k+}$ and $c \circ_l d$ is the result of piping the output of $d$ into the $l$-th input of $c$.
  
  For a cyclic operad $\operad P$ and $n \ge 0$, we set $\operad P \cyc{n + 1} \defeq \operad P (n)$ as a $\Symm{n+}$-module.
\end{definition}

\begin{remark}
  The condition in the definition of a cyclic operad is equivalent to requiring composition along \emph{un}rooted trees to be well-defined, analogously to operads and rooted trees.
\end{remark}

\begin{definition}
  A \emph{modular operad} is a cyclic operad $\operad P$ equipped with a grading
  \[ \operad P \cyc n  =  \Dirsum_{g = 0}^\infty \operad P \cyc{g, n} \]
  (called the \emph{genus} grading) and for each pair $i, j \in \{0, \dots, n\}$ with $i \neq j$ a contraction operation
  \[ \xi_{i,j} \colon \operad P \cyc{g, n} \longto \operad P \cyc{g+1, n-2} \]
  (which one should think of as connecting the $i$-th and $j$-th in/output to each other) such that composition along arbitrary graphs is well-defined.
  An explicit list of axioms equivalent to this condition can be found in \cite[Theorem 3.7]{GetzlerKapranov98}.
\end{definition}

\begin{remark}
  It is customary to impose the so called ``stability condition''
  \[ \operad P \cyc{g, n} \iso 0  \text{ when } 2g + n \le 2 \]
  in the definition of a modular operad.
  We will not need this restriction however, and so we drop it from the definition.
\end{remark}

\begin{example}
  The operads $\Lie$ and $\Cinfty$ are cyclic operads.
  Any cyclic operad can be considered as a modular operad by putting everything in genus $0$ and letting all contractions act trivially.
\end{example}

There is another way of producing a modular operad from $\Cinfty$ (or any other cyclic operad).
Namely we let $\operad F$ be the modular operad freely generated by the cyclic operad $\Cinfty$.
In genus $0$ this has $\operad F \cyc{0, n} \iso \Cinfty \cyc n$ and in higher genera it is freely generated from genus $0$ under applications of the contraction operations.
The modular operad $\operad F$ is the ``modular envelope'' of the cyclic operad $\Cinfty$ and isomorphic to the ``Feynman transform'' of the cyclic operad $\Lie$ considered as a modular operad, see \cite[Corollary 9.3]{Ward19}.

The differential of $\Cinfty$ induces a differential $\del$ on $\operad F$.
For example we have
\[ \del \colon \xi_{1,2}(m_3) = 
\begin{tikzpicture}[baseline = (m.base)]
  \draw (1,-1) node (m) {$m_3$};
  \draw (0,0) -- (m);
  \draw (1,0) -- (m);
  \draw (2,0) node[anchor=south west] {\tiny 1} -- (m);
  \draw (m.south) -- (1,-2) node[anchor=north] {\tiny 0};
  \draw (0,0) .. controls (0,1) and (1,1) .. (1,0);
\end{tikzpicture}
\longmapsto
\begin{tikzpicture}[baseline = (base.base)]
  \draw (1.5,-0.66) node (m1) {$m_2$};
  \draw (1,-1.33) node (m2) {$m_2$};
  \draw (1,-1) node (base) {};
  \draw (0,0) -- (m2);
  \draw (1,0) -- (m1);
  \draw (2,0) node[anchor=south west] {\tiny 1} -- (m1);
  \draw (m1) -- (m2);
  \draw (m2) -- (1,-2) node[anchor=north] {\tiny 0};
  \draw (0,0) .. controls (0,1) and (1,1) .. (1,0);
\end{tikzpicture}
-
\begin{tikzpicture}[baseline = (base.base)]
  \draw (0.5,-0.66) node (m1) {$m_2$};
  \draw (1,-1.33) node (m2) {$m_2$};
  \draw (1,-1) node (base) {};
  \draw (0,0) -- (m1);
  \draw (1,0) -- (m1);
  \draw (2,0) node[anchor=south west] {\tiny 1} -- (m2);
  \draw (m1) -- (m2);
  \draw (m2) -- (1,-2) node[anchor=north] {\tiny 0};
  \draw (0,0) .. controls (0,1) and (1,1) .. (1,0);
\end{tikzpicture}
\]

in $\operad F \cyc{1,2}$.
Hence $\operad F \cyc {g,n}$ is a finite chain complex
\[ \operad F \cyc{g,n}_{2g-3+n}  \xlongto{\del}  \operad F \cyc{g,n}_{2g-4+n}  \xlongto{\del}  \dots  \xlongto{\del}  \operad F \cyc{g,n}_{0} \]
(which degrees can be non-trivial is an easy combinatorial consequence of $m_k$ having degree $k - 2$).
This is one example of a ``graph complex''.
In genus $0$ its homology is easy to describe, being given by
\[ \Ho k (\operad F \cyc {0,n}) \iso \begin{cases} \QQ, & k = 0 \\ 0, & k > 0 \end{cases} \]
but in higher genera this has a very interesting, and complicated, structure which is not yet fully understood.

The homology of the graph complex $\operad F$ turns out to be related to the homology of certain automorphism groups of free groups. As discussed above (see \cref{ex:heq of wedge}), the group of homotopy classes of self--homotopy equivalences of a bouquet of $g$ circles may be identified with the group of outer automorphisms of the free group on $g$ generators, 
$$\heq{\textstyle \bigvee^g S^1} \iso \Out(\freegroup{g}).$$
More generally, define $A_{g,n}$ to be the group of homotopy classes of self--homotopy equivalences of $\bigvee^g S^1$ relative to $n$ marked points. Then
$$
A_{g,0} \iso \Out(\freegroup{g}), \quad A_{g,1} \iso \Aut(\freegroup{g}), \quad A_{g,n} \iso \Aut(\freegroup{g}) \ltimes \freegroup{g}^{n-1}.
$$


The following is due to Kontsevich \cite{Kontsevich93,Kontsevich94} for $n = 0$ and Conant--Kassabov--Vogtmann \cite[Theorem 11.1]{CKV13} for $n>0$.

\begin{theorem}[Kontsevich, Conant--Kassabov--Vogtmann] \label{thm:kckv}
For all $g + n \ge 2$ and all $k$, there is an isomorphism
\[ \Ho k (\operad F \cyc {g,n})  \iso  \Ho k (A_{g,n}; \QQ). \]
\end{theorem}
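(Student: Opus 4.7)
The approach is to model both sides as (essentially) the Chevalley--Eilenberg homology of a Lie algebra of symplectic derivations, and then compare the two descriptions. This follows the blueprint of Kontsevich's original argument for $n = 0$, with the extension to $n > 0$ provided by Conant--Kassabov--Vogtmann.

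First I would recall Kontsevich's identification of the connected piece of $\CE{*}\bigl(\Der[\omega] \LL V_N\bigr)^{\mathrm{Sp}(V_N)}$ with the graph complex $\operad F$ in the limit $N \to \infty$ (for fixed $(g,n)$ the relevant summand stabilizes once $N$ is large). The key tool is classical invariant theory: by Weyl's first and second fundamental theorems for $\mathrm{Sp}$, the $\mathrm{Sp}(V_N)$-invariant Chevalley cochains on $\Der[\omega] \LL V_N$ are built by pairing inputs and outputs along graphs, and the resulting combinatorial objects are precisely the generators of $\operad F$. Decomposing by genus and number of legs isolates $\operad F \cyc{g,n}$, and the Chevalley--Eilenberg differential matches the graph differential $\partial$ described earlier.

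Second I would model $\B A_{g,n}$ by a marked variant of Culler--Vogtmann outer space. This is a contractible simplicial complex on which $A_{g,n}$ acts with finite stabilizers, so that the rational homology of the quotient computes $\Ho{*}(A_{g,n}; \QQ)$. Passing to the spine (a lower-dimensional deformation retract) and extracting its rational cellular chain complex, one obtains a complex whose underlying graded vector space is $\operad F \cyc{g,n}$ up to a sign twist coming from cell orientations, and whose cellular boundary matches the graph differential. Combining the two identifications then yields the desired isomorphism.

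The main technical obstacle is the second step: one must verify that the cellular chain complex of the spine of the $n$-pointed version of outer space, together with its orientations and boundary map, matches $\operad F \cyc{g,n}$ \emph{on the nose}. For $n = 0$ this is Kontsevich's original result and is already delicate because of the subtle sign conventions needed to turn orientations of graph cells into orientations of Chevalley--Eilenberg cells. For $n > 0$ one must construct the appropriate marked version of outer space and carefully track how leg labels and basepoints propagate through the identification; in particular the semidirect product decomposition $A_{g,n} \iso \Aut(\freegroup{g}) \ltimes \freegroup{g}^{n-1}$ requires a partial blow-up of the action in order for the quotient to have the desired cell structure. This extension to legs is the contribution of Conant--Kassabov--Vogtmann and absorbs the bulk of the technical work.
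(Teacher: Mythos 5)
The paper does not actually prove this theorem: it is quoted with references to Kontsevich for $n=0$ and to Conant--Kassabov--Vogtmann for $n>0$, so there is no in-text argument to compare yours against. Your outline does follow the standard published strategy, and the two halves you describe are the right ones. Three caveats are worth recording. First, for the statement as given only your second step is needed: the theorem compares graph homology directly with group homology, and the symplectic invariant-theory step (Weyl's fundamental theorems applied to $\CE{*}(\Der[\omega]\LL V_N)^{\Sp}$) is a separate identification, used elsewhere in the lecture to relate the stable invariant Chevalley--Eilenberg complex to $\SA\operad F^d[0]$; including it here does no harm but proves a different comparison. Second, the claim that the cellular chain complex of the spine matches $\operad F\cyc{g,n}$ ``on the nose'' understates the real difficulty: the cells of the spine are simplices indexed by chains in the poset of marked graphs, equivalently by pairs consisting of a graph and a forest, not by graphs with $\Lie$-decorated vertices, and the identification of the resulting ``forested graph complex'' with the Feynman transform of $\Lie$ is a genuine theorem (this is the content of Conant--Vogtmann's \emph{On a theorem of Kontsevich}, which also repairs the orientation conventions in Kontsevich's original sketch). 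Third, finiteness of stabilizers gives $\Ho{*}(X/A_{g,n};\QQ)\iso\Ho{*}(A_{g,n};\QQ)$ for the contractible complex $X$, but one must additionally check that cells with orientation-reversing stabilizers contribute zero on both sides; this is exactly why graphs with orientation-reversing automorphisms vanish in $\operad F$, and it is part of the sign bookkeeping you flag. With these points understood, your proposal is an accurate roadmap of the proof in the literature rather than a new route, and the gaps you identify (orientations, and the marked/hairy version of outer space for $n>0$) are indeed where the published proofs spend their effort.
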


The homology of either side is still largely unknown. See \cite{CKV13,CHKV16} for some computations and a discussion.


\subsection{Graph homology and automorphisms of manifolds}
We will now discuss the results of Berglund--Madsen \cite{BerglundMadsen20} that relate the cohomology of automorphisms of high dimensional manifolds to Kontsevich graph homology. Certain key parts of the proof \cite{BerglundMadsen20} have been simplified by \cite[\S4.2]{BerglundZeman21}, and the outline presented in this section will follow the latter.

Recall that 
$$W_g = \Conn_g \bigl( \Sphere{d} \times \Sphere{d} \bigr) $$
and that $W_{g,1}$ denotes what is left after removing the interior of a small embedded disk $\Disk{2d} \subset W_g$.

Let $\Gamma_g$ be the group of automorphisms of $\Ho{d}(W_{g})$ that are induced by a self--homotopy equivalance of $W_g$. Under the identification of the group of automorphisms of the abelian group $\Ho{d}(W_{g}) \cong \ZZ^{2g}$ with $\GL_{2g}(\ZZ)$, one can check that
$$\Gamma_g =
\begin{cases}
\OO_{g,g}(\ZZ), & \mbox{$d$ even}, \\
\Sp_{2g}(\ZZ), & d=1,3,7, \\
\Sp_{2g}^q(\ZZ), & \mbox{$d$ odd, $d\ne 1,3,7$},
\end{cases}$$
where $\Sp_{2g}^q(\ZZ) \subseteq \Sp_{2g}(\ZZ)$ is the finite-index subgroup of symplectic $(2g\times 2g)$-matrices
$$
\begin{pmatrix}
A & B \\ C & D
\end{pmatrix}
$$
such that the diagonal entries of the $(g\times g)$-matrices $C^t A$ and $D^t B$ are even. It can be shown that each element of $\Gamma_g$ can be realized by a self--homotopy equivalence (in fact diffeomorphism) of $W_{g,1}$ that fixes the boundary pointwise, cf.~\cite[\S5.2]{BerglundMadsen20}.

Next, one considers the homotopy fiber sequence
\begin{equation} \label{eq:hfib}
\B\mathrm{tor}_\partial(W_{g,1}) \longto \B\bdryaut(W_{g,1}) \longto \B\Gamma_g,
\end{equation}
where $\mathrm{tor}_\partial(W_{g,1}) \subset \bdryaut(W_{g,1}) $ denotes the ``Torelli submonoid'', i.e.\ the submonoid of self--homotopy equivalences that act trivially on $\Ho{d}(W_g)$. The space $\B\mathrm{tor}_\partial(W_{g,1})$ is nilpotent and rationally equivalent to $\trunc 1 {\B\aut_\partial(W_{g,1})}$ and, as discussed in \cref{ex:W_g}, the latter space has Lie model
$$\lie g_g = \trunc 1 {\Der[\omega] (\LL V_g)}.$$
In particular, this means that there is an isomorphism of algebras
$$\Coho * \big(\B\mathrm{tor}_\partial(W_{g,1});\QQ\big) \iso  \Coho[CE]{*} (\lie g_g).$$

The spectral sequence of the homotopy fiber sequence \eqref{eq:hfib} turns out to collapse at the $E_2$-page. This was proved stably (i.e.~for $g$ large compared to the cohomological degree) in \cite{BerglundMadsen20}, and later without any restrictions in \cite[Theorem 4.32]{BerglundZeman21}. This yields an isomorphism (\cite{BerglundZeman21} even shows there is an isomorphism of algebras)
$$
\Coho * (\B \bdryaut(W_{g,1}); \QQ) \iso 
\Coho * \big(\Gamma_g; \Coho[CE]{*} (\lie g_g) \big).
$$
The action of $\Gamma_g$ on the Chevalley--Eilenberg cohomology of $\lie g_g =  \trunc 1 {\Der[\omega] (\LL V_g)}$ is induced by the evident action on $V_g$. By using vanishing results for the cohomology of arithmetic groups due to Borel \cite{Borel81}, one can show that the right-hand side reduces to 
$$
\Coho * (\Gamma_g;\QQ) \tensor \Coho[CE]{*} (\lie g_g)^{\Gamma_g}
$$
in the stable range. In this range, the left factor $\Coho * (\Gamma_g;\QQ)$ is well-understood; results of Borel \cite{Borel75} show that it is a polynomial ring on certain classes $x_i$ for $i\geq 1$.

The right factor $\Coho[CE]{*} (\lie g_g)^{\Gamma_g}$ can be expressed in terms of graph homology, as we now will explain.

Forgetting the homological grading, the Lie algebra $\lie g_g$ is identical to the positive part of the Lie algebra of symplectic derivations considered by Kontsevich \cite{Kontsevich93}. He showed that the Chevalley--Eilenberg cohomology of it is related the homology of the Lie graph complex $\operad F$. 
To account for the homological grading, we define a regraded version $\operad F^d$ of $\operad F$ by
$$\operad F^d \cyc{g,n} = s^{2d(1-g)} \operad F\cyc {g,n}.$$
For a graded vector space $P$, we let
$$\operad F^d[P] = \bigoplus_{g,n} \operad F^d\cyc{g,n} \otimes_{\Sigma_n} P^{\otimes n},$$
and note that
$$\operad F^d[0] = \bigoplus_{g} \operad F^d\cyc{g,0}.$$

The following is essentially dual to \cite[Theorem 9.1]{BerglundMadsen20}.
\begin{theorem}
There is an isomorphism of chain complexes
$$\lim{g} \coCE{*}(\lie g_g)^{\Gamma_g} \iso \SA \operad F^d[0]$$
where the right hand side denotes the free graded commutative algebra on the chain complex $\operad F^d[0]$.
\end{theorem}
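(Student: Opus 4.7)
The plan is to follow Kontsevich's classical argument identifying Chevalley--Eilenberg cohomology of orthogonal/symplectic-type derivation Lie algebras with graph homology, with care taken to match the regrading $\operad F^d$ and the truncation in $\lie g_g$. First I would unfold definitions: as a graded commutative algebra, $\coCE{*}(\lie g_g)$ is freely generated by $s^{-1}\dual{\lie g_g}$, and both taking $\Gamma_g$-invariants and the limit over $g$ commute with the free commutative algebra functor. I would then invoke the standard stabilization lemma for invariants of $\SA$ on a polynomial functor of a classical-group representation (a form of Kontsevich's $\lim_g$-theorem, cf.\ the discussion preceding \cref{thm:kckv}), which says that stably the invariants of $\SA(W)$ split as $\SA$ of the \emph{connected} invariants of $W$. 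This reduces the problem to identifying the connected stable $\Gamma_g$-invariants of $s^{-1}\dual{\lie g_g}$ with $\operad F^d[0]$, i.e.\ with the sum of the graph spaces $\operad F^d\cyc{g,0}$.

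Next I would give a tree-theoretic description of $\lie g_g = \trunc 1 {\Der[\omega]\LL V_g}$. A derivation of $\LL V_g$ is determined by its values on generators, so $\Der\LL V_g \iso \dual{V_g}\tensor \LL V_g$. An element of $\LL V_g$ is a linear combination of iterated brackets, hence is encoded by a rooted tree with leaves decorated by $V_g$, modulo (anti)symmetry and Jacobi. An element of $\Der \LL V_g$ thus looks like a rooted tree with one distinguished \emph{output} leaf (the $\dual{V_g}$-factor) and arbitrary input leaves labeled by $V_g$. The condition $\theta(\omega)=0$ and the degree truncation cut out certain pieces but, crucially, do not affect connected invariants in the stable range, since the truncation only removes the zero-arity pieces and the $\omega$-constraint is vacuously inherited from the invariance once all legs are contracted.

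The next step is Weyl's first fundamental theorem of invariant theory. The cycle $\omega$ determines a nondegenerate $\Gamma_g$-invariant pairing on $V_g$ of degree $2d-2$, which stably identifies $\dual{V_g} \iso V_g$ up to a suspension shift. Weyl's FFT says that stable $\Gamma_g$-invariants of $V_g^{\tensor n}$ are spanned by complete pairings of the $n$ tensor factors using this form. Applied to the tree description above, a $\Gamma_g$-invariant element is obtained by contracting all leaves of a disjoint union of the above decorated rooted trees in pairs via $\omega$. Under the standard Koszul-dual identification of free Lie vertices with $\Cinfty$-vertices (which is precisely the identification $\operad F \iso $ modular envelope of $\Cinfty$ recalled earlier), this data is exactly a closed graph whose vertices are $\Cinfty$-corollas and whose edges come from $\omega$-contractions. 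Disjoint union of graphs corresponds to multiplication in $\SA$, and connected graphs with no external legs span exactly $\operad F^d\cyc{g,0}$. The $\coCE{*}$-differential, which dualizes the Lie bracket of derivations, translates into vertex expansion of graphs, which is precisely the differential induced on $\operad F$ by the relation $\partial(m_3) = m_2(\id,m_2) - m_2(m_2,\id)$ recalled earlier.

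The main obstacle I expect is the bookkeeping that produces the regrading $s^{2d(1-g)}$. The degree $d-1$ of $V_g$, the degree $2d-2$ of the $\omega$-pairing, the suspension $s^{-1}$ in $\coCE{*}(\lie g_g) \iso \SA(s^{-1}\dual{\lie g_g})$, and the dualization all contribute a priori different shifts; these must combine via an Euler-characteristic identity on a connected genus-$g$ graph ($V-E = 1-g$) to yield exactly the factor $s^{2d(1-g)}$ appearing in $\operad F^d\cyc{g,0} = s^{2d(1-g)}\operad F\cyc{g,0}$. Once this degree computation and the associated sign comparison are carried out, the isomorphism of chain complexes follows.
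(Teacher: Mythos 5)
Your overall strategy---unravelling $\coCE{*}(\lie g_g)$ as the free graded commutative algebra on $\dual{(s\lie g_g)}$, describing elements of $\lie g_g$ by decorated trees, applying Weyl's first fundamental theorem to identify the stable $\Gamma_g$-invariants with complete contractions of legs, recognizing the result as graphs with $\Cinfty$-decorated vertices, and isolating the degree bookkeeping behind $s^{2d(1-g)}$---is exactly the Kontsevich-style argument underlying \cite[Theorem 9.1]{BerglundMadsen20}, which is what the text cites in lieu of a proof. So the route is the intended one, and most of the outline (the truncation removing the arity-one part, i.e.\ bivalent vertices and the classical Lie algebra of $\Gamma_g$; the $\coCE{*}$-differential becoming vertex expansion; the Euler-characteristic computation producing $2d(1-g)$) is correct in substance. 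Note, though, that your opening claim that invariants and $\lim_g$ ``commute with the free commutative algebra functor'' contradicts the (correct) stabilization statement you invoke in the very next sentence; only the latter is true, and it is the one you actually use.

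There is, however, one genuine gap: the treatment of the condition $\theta(\omega)=0$. You model a derivation as a rooted tree with one distinguished output leaf via $\Der\LL V_g \iso \dual{V_g}\otimes\LL V_g$ and then assert that the $\omega$-constraint is ``vacuously inherited'' after contraction. It is not, and this is precisely the point at which the cyclic symmetry of the vertices---and hence the match with the modular operad $\operad F$---is produced. The evaluation $\theta\mapsto\theta(\omega)$ exhibits $\Der[\omega]\LL V_g$ as a proper $\Gamma_g$-subrepresentation of $\Der\LL V_g$ with quotient $[\LL V_g,\LL V_g]$; the nondegenerate form determined by $\omega$ identifies this kernel (up to shift) with the cyclic coinvariants $\bigoplus_n \Lie\cyc{n+1}\otimes_{\Symm{n+1}} V_g^{\otimes n+1}$, that is, with corollas all of whose legs are on an equal footing, with no distinguished root. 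If you instead fed the rooted corollas $\dual{V_g}\otimes\LL V_g$ into Weyl's theorem, the complete contractions would produce graphs carrying a marked half-edge at every vertex---a strictly larger complex that is not $\SA\operad F^d[0]$. The $\omega$-condition must therefore be invoked \emph{before} taking invariants, to replace rooted $\Lie$-corollas by cyclic ones; only then does the Koszul-dual identification of vertex decorations with generators of $\Cinfty$, and hence of the resulting closed connected graphs with $\operad F^d\cyc{g,0}$, go through. Repairing this step is a matter of inserting the standard identification of $\Der[\omega]\LL V_g$ with the cyclic $\Lie$-construction on $V_g$, but as written the argument computes the invariants of the wrong representation.
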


The above (together with homological stability results and semisemplicity of the $\Gamma_g$-representation $C^*(\lie g_g)$) lead to 

\begin{theorem}[Berglund--Madsen]
There is an isomorphism
$$
\lim{g} \Coho{*}(\B\bdryaut(W_{g,1});\QQ) \iso \QQ[x_1,x_2,\ldots] \otimes \Lambda \Ho{*}(\operad F^d)[0].
$$
\end{theorem}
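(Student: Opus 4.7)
The plan is to chain together the ingredients developed in this section and take the stable limit in $g$. First, the $E_2$-collapse of the Serre spectral sequence of the fibration \eqref{eq:hfib} (proved stably in \cite{BerglundMadsen20} and unconditionally in \cite{BerglundZeman21}) yields an isomorphism
\[
\Coho{*}(\B\bdryaut(W_{g,1});\QQ) \iso \Coho{*}\bigl(\Gamma_g; \Coho[CE]{*}(\lie g_g)\bigr).
\]
Since $\lie g_g$ is built functorially from the $\Gamma_g$-representation $V_g$, the coefficient system on the right is a polynomial, hence semisimple, representation of $\Gamma_g$.

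Next I would invoke Borel's vanishing theorem \cite{Borel81} for the cohomology of arithmetic groups with polynomial coefficients to deduce that, in the stable range, the higher cohomology in nontrivial isotypic components vanishes, so that the above reduces to the tensor product
\[
\Coho{*}(\Gamma_g;\QQ) \tensor \Coho[CE]{*}(\lie g_g)^{\Gamma_g}.
\]
By Borel's computation \cite{Borel75}, the left tensor factor stabilizes as $g \to \infty$ to the polynomial algebra $\QQ[x_1,x_2,\ldots]$ on generators of the appropriate degrees. For the right factor, the preceding theorem provides an isomorphism of chain complexes $\lim{g} \coCE{*}(\lie g_g)^{\Gamma_g} \iso \SA \operad F^d[0]$. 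Since we work over $\QQ$, the free graded commutative algebra functor is exact and commutes with taking cohomology, so
\[
\lim{g} \Coho[CE]{*}(\lie g_g)^{\Gamma_g} \iso \Coho{*}\bigl(\SA \operad F^d[0]\bigr) \iso \Lambda \Ho{*}(\operad F^d)[0].
\]
Assembling the two stable tensor factors then produces the claimed isomorphism.

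The main obstacle I expect lies not in the algebra but in synchronizing the various stable ranges. The spectral sequence collapse, Borel's vanishing, the stabilization of $\Coho{*}(\Gamma_g;\QQ)$, and the homological stability needed for $\lim{g} \Coho{*}(\B\bdryaut(W_{g,1});\QQ)$ to be meaningful each come with their own bounds, and one must verify that these bounds are compatible in each fixed cohomological degree so that the three successive identifications can be promoted to a single statement about the inverse limit. Once that compatibility is established, the remaining combinatorial bookkeeping, in particular the identification of $\Coho{*}(\SA \operad F^d[0])$ with $\Lambda \Ho{*}(\operad F^d)[0]$ via Künneth, is purely formal.
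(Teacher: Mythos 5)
Your proposal follows essentially the same route as the paper's own sketch: collapse of the Serre spectral sequence for \eqref{eq:hfib}, Borel vanishing to split off $\Coho{*}(\Gamma_g;\QQ)\tensor\Coho[CE]{*}(\lie g_g)^{\Gamma_g}$ in the stable range, Borel's computation of the first factor, and the preceding theorem plus exactness of $\SA(\blank)$ over $\QQ$ for the second, with the same caveats about semisimplicity of the coefficients and compatibility of the various stability ranges. This matches the argument of \cite{BerglundMadsen20} as outlined here, so there is nothing further to add.
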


\begin{remark}
Stoll \cite{Stoll22} recently proved similar results where $W_g$ is replaced by manifolds of the form
$$\Conn_g \bigl( S^k \times S^\ell \bigr), $$
for $3\leq k < \ell \leq 2k-2$.
An interesting new phenomenon is that a twisted version of the Lie graph complex $\operad F$ appears in the odd dimensional case.
\end{remark}

Berglund--Madsen also consider the block diffeomorphism group $\tDiff_\partial(W_{g,1})$. Interestingly, the full graph complex $\operad F$, and not just the ``vacuum'' part $\operad F[0]$, appears in the description of its stable cohomology. Considerations similar in outline to the above imply the following result, where $P \subset \Coho{*}(BO;\QQ)$ denotes the graded vector space spanned by all Pontryagin classes $p_i$ of degree $4i > d$.
\begin{theorem}[Berglund--Madsen]
There is an isomorphism
$$
\Coho{*}(\B\tDiff_\partial(W_{\infty,1});\QQ) \iso \QQ[x_1,x_2,\ldots] \otimes \Lambda \Ho{*}(\operad F^d)[P].
$$
\end{theorem}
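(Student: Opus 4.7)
The plan is to follow the same overall strategy as for $\B\bdryaut(W_{g,1})$, but with an added tangential layer that distinguishes block diffeomorphisms from homotopy automorphisms. First I would set up a surgery-theoretic fibration of the form
\[
\mathcal T_g \longto \B\tDiff_\partial(W_{g,1}) \longto \B\bdryaut(W_{g,1}),
\]
where the fiber $\mathcal T_g$ parametrizes tangential (block bundle) structures lifting a given homotopy equivalence. Rationally, $\mathcal T_g$ is a product of Eilenberg--MacLane spaces whose cohomology is polynomial on characteristic classes of the form $p_i$ evaluated against cycles in $W_{g,1}$ relative to $\partial W_{g,1}$. Since $W_{g,1}$ is $(d-1)$-connected with reduced homology concentrated in degrees $d$ and $2d$, the Pontryagin classes $p_i$ with $4i \le d$ either vanish on $W_{g,1}$ or are already detected by its stable tangent bundle and get absorbed into the Borel polynomial factor; what remains are precisely the classes in $P$, decorating the cycles in $\rHo *(W_{g,1})$.

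Combining $\mathcal T_g$ with the fibration \eqref{eq:hfib}, I would obtain a tower whose base is $\B\Gamma_g$ and whose iterated fiber is $\B\mathrm{tor}_\partial(W_{g,1})\times \mathcal T_g$, with $\Gamma_g$ acting diagonally through its natural action on $V_g$. Arguing exactly as in \cite{BerglundMadsen20, BerglundZeman21}, the rational Serre spectral sequence over $\B\Gamma_g$ collapses at $E_2$, yielding
\[
\Coho{*}\bigl(\B\tDiff_\partial(W_{g,1});\QQ\bigr) \iso \Coho{*}\bigl(\Gamma_g;\; \coCE{*}(\lie g_g)\tensor \Lambda(V_g\tensor P)\bigr)
\]
in the stable range. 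Borel's stability and vanishing theorems then split the $\Gamma_g$-cohomology as an external polynomial factor $\QQ[x_1,x_2,\ldots]$ tensored with the $\Gamma_g$-invariants of the coefficient module.

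The final step is a $P$-decorated version of Kontsevich's graph homology calculation: the same invariant-theoretic argument that identifies the stable $\Gamma_g$-invariants of $\coCE{*}(\lie g_g)$ with $\Lambda \operad F^d[0]$ extends, when one further tensors with $\Lambda(V_g\tensor P)$, to produce $\Lambda \operad F^d[P]$. Each tensor factor $V_g\tensor P$ supplies an external leg labeled by an element of $P$, and the $\Gamma_g$-equivariance forces these legs to be contracted against the internal $V_g$ factors inside $\coCE{*}(\lie g_g)$ according to the combinatorics of the modular operad $\operad F^d$. Passing to homology and to the limit $g \to \infty$ then yields the stated formula.

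The main obstacle will be the precise identification of the fiber $\mathcal T_g$ together with the cut-off $4i > d$: this requires a careful analysis of surgery-theoretic block structures and a determination of which Pontryagin classes contribute independently once the tangent bundle of $W_{g,1}$ and the external Borel classes are taken into account. Granted this identification, the remaining steps run in parallel to the proof of the previous theorem.
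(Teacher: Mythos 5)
The paper itself offers no proof of this theorem; it only says that ``considerations similar in outline to the above'' apply and defers entirely to \cite{BerglundMadsen20}. Measured against that intended outline, your strategy is the right one: compare $\B\tDiff_\partial(W_{g,1})$ to $\B\bdryaut(W_{g,1})$ via a fibration whose fiber is controlled by surgery theory, run the same $\B\Gamma_g$ spectral sequence and Borel vanishing, and finish with a decorated version of the invariant-theory computation. Your final coefficient module $\coCE{*}(\lie g_g)\tensor \Lambda(V_g\tensor P)$ and the description of how $\Gamma_g$-invariance contracts the decorated legs against the internal $V_g$'s is indeed how $\operad F^d[P]$ (rather than $\operad F^d[0]$ tensored with something) arises.

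There are, however, two genuine gaps. First, you assert that the iterated fiber over $\B\Gamma_g$ is the \emph{product} $\B\mathrm{tor}_\partial(W_{g,1})\times \mathcal T_g$. This is not justified and is where the real content lies: in \cite{BerglundMadsen20} the Lie model for the block-diffeomorphism Torelli space is a nontrivial extension (a semidirect product of $\lie g_g = \trunc 1 {\Der[\omega] \LL V_g}$ with an abelian piece built from the rational homotopy of the block structure space, on which $\lie g_g$ acts through $\rHo{*}(W_{g,1})$). One must therefore compute Chevalley--Eilenberg cohomology of $\lie g_g$ with coefficients in this module, not tensor two cohomology rings together; that the answer nevertheless has the shape $\coCE{*}(\lie g_g)\tensor \Lambda(V_g\tensor P)$ as a $\Gamma_g$-representation is a statement requiring proof, not a starting point. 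Second, the identification of $\mathcal T_g$ and the cut-off $4i>d$ is the surgery-theoretic heart of the argument, and your heuristic for the cut-off (that $p_i$ with $4i\le d$ ``vanish or get absorbed into the Borel factor'') is not the actual mechanism: the bound comes from the fact that the positive-degree rational homotopy groups of the block structure space are $\pi_k \tensor \QQ \iso \Coho{4i-k}(W_{g,1},\partial W_{g,1};\QQ)$ summed over $i$, and connectivity of $W_{g,1}$ forces $4i\in\{d+k,\,2d+k\}$ with $k\ge 1$, hence $4i>d$. You flag this step as the main obstacle, which is fair, but as written the proposal replaces the two hardest inputs of Berglund--Madsen's argument with placeholders.
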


In view of the fact that the Lie graph complex $\operad F$ originated as a device to compute the homology of automorphism groups of free groups \cite{Kontsevich93,Kontsevich94}, it is surprising that it also appears in the cohomology of automorphisms of high dimensional manifolds. Whether a more direct connection between the homology of automorphism groups of free groups and the cohomology of automorphism of high dimensional manifolds can be found is an open problem.

\begin{appendices}
  \crefalias{section}{appsec}
  \crefalias{subsection}{appsec}
  
  \section{Massey products and higher order Whitehead products}

The $C_\infty$-algebra structure on cohomology $\Coho*(X;\QQ)$ and the $L_\infty$-algebra structure on the rational homotopy groups $\pi_{*+1}(X)\otimes \QQ$ are closely related to Massey products and higher order Whitehead products, respectively. The goal of this section is to make these statements more precise and collect some relevant references.

\subsection{Massey products}
The discussion of Massey products does not require commutativity, so let us fix an associative dg algebra $(A,d)$.
The $n$-fold Massey product of $n \ge 2$ homology classes $x_1, \ldots, x_n \in \Ho*(A)$ is a subset
$$\langle x_1, \ldots, x_n \rangle \subseteq \Ho*(A)$$
that is defined whenever $\langle x_i ,\ldots, x_j \rangle$ is defined and contains $0$ for all $1 \le i < j \le n$ such that $(i, j) \neq (1, n)$.
Assume this condition holds; then there exists a family of chains $a_{i,j} \in A$, indexed by all $(i,j) \ne (0,n)$ with $0\leq i < j \leq n$, such that $a_{i-1,i}$ is a representative of $x_i$ and such that
$$d(a_{i,j}) = \sum_{i < k < j} \overline{a}_{i,k} a_{k,j}$$
where $\overline{a} \defeq (-1)^{|a|+1} a$.
Now $\langle x_1, \ldots, x_n \rangle$ is defined to be the set of all homology classes associated to cycles of the form
$$\sum_{0 < k < n} \overline{a}_{0,k} a_{k,n}$$
where $a_{i,j}$ is some family of chains as above.
Note that $\langle x_1, x_2 \rangle$ is given by the singleton $\set{(-1)^{\deg{x_1}+1} x_1 x_2}$.

The following proposition summarizes some relations between Massey products and the $A_\infty$-algebra structure on $\Ho*(A)$ obtained via the homotopy transfer theorem.
When specialized to a cdga model $A$ for a space $X$, this gives relations between the minimal $C_\infty$-algebra structure on $H^*(X;\QQ)$ and Massey products. We refer the reader to \cite{BMFM20} and \cite[\S18.3]{GriffithsMorgan13} for more details.

\begin{proposition} \label{prop:Massey}
  Let $x_1, \dots, x_n \in \Ho*(A)$ and set $e \defeq \sum_{i = 1}^n (n - i) \deg {x_i}$.
  \begin{enumerate}
    \item Assume that $\langle x_1, \ldots, x_n \rangle$ is defined.
    Then for all $x \in \langle x_1, \ldots, x_n \rangle$ we have $[x] = [(-1)^e m_n(x_1, \ldots, x_n)]$ in the quotient $\Ho*(A) / \sum_{k<n} \im(m_k)$.
    \item Assume that $m_k = 0$ for all $k \le n - 2$ and that $\langle x_1,\ldots, x_n \rangle$ is defined.
    Then $(-1)^e m_n(x_1,\ldots,x_n) \in \langle x_1,\ldots, x_n \rangle$.
    \item Each $x \in \langle x_1,\ldots,x_n \rangle$ can be realized as $\epsilon m_n(x_1,\ldots,x_n)$ as above for some sign $\epsilon \in \set{\pm 1}$, after possibly passing to an $\Ainfty$-isomorphic $\Ainfty$-algebra structure on $\Ho*(A)$.
  \end{enumerate}
\end{proposition}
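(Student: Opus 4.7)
The plan is to derive all three parts from the explicit tree formula for the $\Ainfty$-structure on $\Ho{*}(A)$ produced by the homotopy transfer theorem. Fix a contraction $(\iota, p, h)$ of $\Ho{*}(A)$ into $A$, with homotopy $h$ satisfying $\id - \iota p = dh + hd$ and the usual side conditions. The transferred operation $m_n$ admits a description as a signed sum, over planar binary rooted trees $T$ with $n$ leaves, of the tree-evaluation in which each leaf is decorated by $\iota$, each internal edge by $h$, each internal vertex by the multiplication $\mu$ of $A$, and the root by $p$ (cf.~\cite[\S10.3]{LodayVallette12}).

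For part (1), given $x_1,\ldots,x_n \in \Ho{*}(A)$ with $\langle x_1,\ldots,x_n\rangle$ defined, I would construct a defining system tailored to the contraction by setting $a_{i-1,i} = \iota(x_i)$ and, inductively in $j-i$, choosing $a_{i,j} = h\bigl(\sum_{i<k<j} \bar{a}_{i,k} a_{k,j}\bigr)$ up to the sign fixed by conventions. The hypothesis that every proper subinterval $\langle x_i,\ldots,x_j\rangle$ contains $0$ lets one make inductive adjustments so that $p$ annihilates the relevant partial sums, ensuring the Massey recursion $d(a_{i,j}) = \sum \bar{a}_{i,k} a_{k,j}$ holds. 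The tree-sum formula for $m_n(x_1,\ldots,x_n)$ can then be reorganized, by iteratively unfolding $\id - \iota p = dh + hd$ at each internal edge, into the expression $p\bigl(\sum_{0<k<n} \bar{a}_{0,k} a_{k,n}\bigr)$, up to terms that factor through intermediate multiplications of already-constructed chains and hence lie in $\sum_{k<n}\im(m_k)$. Part (2) follows at once: under the hypothesis $m_k = 0$ for all $k \leq n-2$, the indeterminacy of $\langle x_1,\ldots,x_n\rangle$ reduces to $\im(m_{n-1})$, and by (1) applied to shorter products, the former is contained in the latter, so the equality in the quotient upgrades to an actual inclusion.

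For part (3), the ambiguity in the choice of defining system matches the ambiguity in the choice of homotopy $h$: given $x \in \langle x_1,\ldots,x_n\rangle$ represented by some system $\{a_{i,j}\}$, one perturbs $h$ to a new homotopy $h'$ so that the inductive recipe above reproduces the given $a_{i,j}$. This is possible because the freedom in modifying $h$ on the finitely many relevant chains (while preserving the side conditions $\id - \iota p = dh + hd$, $ph = 0$, $h\iota = 0$, $h^2 = 0$) exactly matches the freedom in choosing a defining system in the inductive construction. The new contraction yields a new transferred $\Ainfty$-structure on $\Ho{*}(A)$ which, by \cref{thm:minimal_Pinfty}, is $\Ainfty$-isomorphic to the original, and for which $\epsilon m_n(x_1,\ldots,x_n) = x$ with sign $\epsilon = (-1)^e$.

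The hard part will be careful sign bookkeeping: the Koszul signs accumulating at internal vertices of the tree formula, the convention $\bar{a} = (-1)^{|a|+1}a$ in the Massey recursion, and the overall factor $(-1)^e$ all interact nontrivially, and the details depend on the precise sign conventions for the shifts in the transfer formula. A parallel sign analysis in the $\Cinfty$/cdga setting has been carried out in \cite{BMFM20}, whose arguments can be adapted to the present $\Ainfty$ statement.
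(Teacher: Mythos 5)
The paper does not actually prove this proposition: it is stated as a summary, with the reader referred to \cite{BMFM20} and \cite[\S18.3]{GriffithsMorgan13} for details. So the only question is whether your sketch is correct, and while the central idea (a defining system adapted to the contraction, compared with the tree formula for the transferred structure) is the right one and is how the cited literature proceeds for parts (1)--(2), each part of your argument has a gap as written.

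For (1), the adapted system $a_{i,j}=h(c_{i,j})$ with $c_{i,j}=\sum_{i<k<j}\overline{a}_{i,k}a_{k,j}$ produces \emph{one} element of $\langle x_1,\ldots,x_n\rangle$, and unfolding the recursion does identify its class with $(-1)^e m_n(x_1,\ldots,x_n)$ via the tree formula; but the statement quantifies over \emph{all} $x$ in the Massey product, i.e.\ over all defining systems, and you never show that two defining systems yield classes agreeing modulo $\sum_{k<n}\im(m_k)$ --- that comparison is the actual content of (1). (Moreover, the ``inductive adjustments'' needed so that $p$ kills each $c_{i,j}$ are coupled: since $d(h(c))=c-\iota p(c)-h(dc)$, you need $[c_{i,j}]=0$ for every proper subinterval simultaneously, and modifying one $a_{i,j}$ changes $c_{i',j'}$ for all longer intervals sharing an endpoint, so these are not independent choices.) For (2), the inference is invalid: knowing that $(-1)^e m_n(x_1,\ldots,x_n)$ is congruent modulo $\im(m_{n-1})$ to an element of $\langle x_1,\ldots,x_n\rangle$ does not place it \emph{in} the Massey product, because the indeterminacy of the Massey product is in general a proper subset of $\im(m_{n-1})$ (already for $n=3$ it is $x_1\cdot\Ho*(A)+\Ho*(A)\cdot x_3$, not all of $\im(m_2)$). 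The correct argument is the one already implicit in your setup for (1): under the vanishing hypothesis each intermediate class $[c_{i,j}]$ is forced to vanish, so the recursion $a_{i,j}=h(c_{i,j})$ closes up on the nose into a genuine defining system whose output class is exactly $p(c_{0,n})=(-1)^e m_n(x_1,\ldots,x_n)$. For (3), the claim that the freedom in perturbing $h$ ``exactly matches'' the freedom in choosing a defining system is the crux and is asserted rather than argued: realizing a prescribed $a_{i,j}$ as $h(c_{i,j})$ forces $h(d a_{i,j})=a_{i,j}$, which together with $hd+dh=\id-\iota p$ and the side conditions $ph=0$, $h\iota=0$, $h^2=0$ is a nontrivial system of constraints. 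The proofs in the literature avoid this by constructing the $\Ainfty$-structure on $\Ho*(A)$ inductively in the style of Kadeishvili, feeding the given defining system into the induction, rather than by modifying the contraction.
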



\subsection{Lie--Massey products and higher order Whitehead products}
The analogs of Massey products for dg Lie algebras are called \emph{Lie--Massey products} and they can be defined as follows (cf.~\cite{Retakh93}). For a dg Lie algebra $(L,d)$, the $n$-fold Lie--Massey product of $n \ge 2$ homology classes $\alpha_1, \ldots, \alpha_n \in \Ho*(L)$ is a subset
$$ \langle \alpha_1, \ldots, \alpha_n \rangle \subseteq \Ho*(L) $$
that is defined whenever $\langle \alpha_{i_1}, \ldots, \alpha_{i_k} \rangle$ is defined and contains $0$ for all proper subsets $\{i_1, \ldots, i_k\} \subsetneq \{1, \ldots, n\}$ of cardinality at least $2$.
Assume this condition holds; then there exists a family of chains $u_S \in L$, indexed by all proper subsets $S \subsetneq \{1, \ldots, n\}$, such that $u_{\set i}$ is a representative of $\alpha_i$ and such that
$$ d(u_S) = \sum (-1)^{e_{V,W}} [u_V, u_W] $$
where the sum is over all partitions $S = V \disjunion W$ such that the smallest element of $S$ is contained in $V$.
Here $e_{V,W}$ is defined to be $\sum_{v \in V} (\deg {\alpha_v} + 1)$ plus the Koszul sign incurred by permuting $\eTensor_{s \in S} \shift \alpha_s$ into $\eTensor_{v \in V} \shift \alpha_v \tensor \eTensor_{w \in W} \shift \alpha_w$.
Now $\langle \alpha_1, \ldots, \alpha_n \rangle$ is defined to be the set of all homology classes associated to cycles of the form
$$ \sum (-1)^{e_{V,W}} [u_V, u_W] $$
where $u_S$ is a family as above, and the sum is over all partitions $\set{1, \dots, n} = V \disjunion W$ with $1 \in V$.
Note that $\langle \alpha_1, \alpha_2 \rangle$ is given by the one element set $\set{(-1)^{\deg {\alpha_1} + 1} [\alpha_1, \alpha_2]}$.

The following proposition summarizes some results that connect Lie--Massey brackets to the $L_\infty$-algebra structure on $\Ho*(L)$ obtained from the homotopy transfer theorem, see \cite{BBMFM17}.
Note that there is no analogue of the third statement of \cref{prop:Massey}.

\begin{proposition} \label{prop:whitehead vs Linfty}
  Let $\alpha_1, \dots, \alpha_n \in \Ho*(A)$ and set $e \defeq \sum_{i = 1}^n (n - i) \deg {\alpha_i}$.
  \begin{enumerate}
      \item Assume that $\langle \alpha_1, \ldots, \alpha_n \rangle$ is defined.
    Then, for all $\alpha \in \langle \alpha_1, \ldots, \alpha_n \rangle$, we have $[\alpha] = [(-1)^e l_n(\alpha_1, \ldots, \alpha_n)]$ in the quotient $\Ho*(L) / \sum_{k<n} \im(l_k)$.
    \item Assume that $l_k = 0$ for all $k \le n - 2$ and that $\langle \alpha_1, \ldots, \alpha_n \rangle$ is defined.
    Then $(-1)^e l_n(\alpha_1, \ldots, \alpha_n) \in \langle \alpha_1, \ldots, \alpha_n \rangle$.
  \end{enumerate}
\end{proposition}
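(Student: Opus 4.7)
The plan is to apply the homotopy transfer theorem to the dg Lie algebra $L$ with a contraction onto $H_*(L)$ adapted to a chosen defining system. Fix a family $(u_S)_{S \subsetneq \{1,\ldots,n\}}$ realising $\langle \alpha_1, \ldots, \alpha_n\rangle$, and set
\[ z_T \defeq \sum_{T = V \sqcup W,\, \min T \in V} (-1)^{e_{V,W}} [u_V, u_W] \]
for each $T \subseteq \{1,\ldots,n\}$, so that $z_T = d u_T$ when $T$ is proper and $z_{\{1,\ldots,n\}}$ is a cycle representing an element of $\langle \alpha_1,\ldots,\alpha_n\rangle$. I would first construct a contraction $(F, G, H)$ of $L$ onto $H_*(L)$ satisfying $G[\alpha_i] = u_{\{i\}}$ for each $i$ and $H(z_T) = -u_T$ for every proper $T$ with $|T| \ge 2$. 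Since each such $z_T$ is a cycle in $\ker F$ that bounds via $u_T$, the contraction identities are compatible with these prescriptions, and they can be extended to a genuine contraction by standard linear-algebra arguments over $\QQ$.

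Next, I would expand the perturbation-lemma formula $b' = F\Sigma G$ with $\Sigma = \sum_{k \ge 0} b(Hb)^k$, recalled in the proof of the homotopy transfer theorem, in weight $n$, obtaining the standard tree formula: an antisymmetrised sum over rooted binary trees with $n$ labelled leaves, with the bracket placed at each internal vertex, the homotopy $H$ inserted on each internal edge, $G$ applied at the leaves, and $F$ at the root. With the tailored contraction, induction on the height of a subtree shows that a subtree spanning the leaves indexed by $S$ contributes, up to sign, the primitive $u_S$, and the whole tree sum therefore collapses to $(-1)^e F(z_{\{1,\ldots,n\}})$, which represents an element of $\langle\alpha_1,\ldots,\alpha_n\rangle$.

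This directly yields part~(2): under the vanishing hypothesis $l_k = 0$ for $k \le n-2$, an inductive application of part~(1) at lower arities forces $l_{n-1}$ to vanish on the relevant subtuples as well, so the tailored contraction agrees with the given transferred structure in all weights below $n$, and $(-1)^e l_n(\alpha_1, \ldots, \alpha_n)$ is literally an element of $\langle\alpha_1,\ldots,\alpha_n\rangle$. For part~(1), I would argue that any two contractions produce $\Linfty$-isomorphic transferred structures with the same linear part, and inspection of the weight-$n$ component of such an isomorphism shows that the difference of the two values of $l_n$ on $(\alpha_1,\ldots,\alpha_n)$ lies in $\sum_{k<n} \operatorname{im}(l_k)$; hence the class $[(-1)^e l_n(\alpha_1,\ldots,\alpha_n)]$ in the stated quotient is intrinsic, and varying the defining system recovers every class in $\langle \alpha_1,\ldots,\alpha_n \rangle$. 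The main obstacle will be sign bookkeeping: matching the Koszul signs from $\coSA(\shift L)$ and from planar-tree symmetrisation to the signs $e_{V,W}$ and the global $(-1)^e$ from the Lie--Massey definition, which is already a delicate task at $n = 3$.
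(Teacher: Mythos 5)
The paper does not actually prove this proposition; it is quoted from \cite{BBMFM17}, and the argument there runs in the opposite direction to yours: the components of the $\infty$-quasi-isomorphism $\Ho*(L) \longinftyto L$ supplied by the homotopy transfer theorem are used to \emph{manufacture} a defining system for the Lie--Massey product, rather than manufacturing a contraction from a prescribed defining system. Your direction is reasonable in spirit, but as written it has genuine gaps. The most serious is the existence of the adapted contraction: the prescription $H(z_T) = -u_T$ defines a linear map only if every linear relation among the cycles $z_T$ is matched by the same relation among the chains $u_T$, and nothing in the definition of a defining system guarantees this (already $z_T = 0$ with $u_T \neq 0$, or $z_T = z_{T'}$ with $u_T \neq u_{T'}$, is fatal); on top of that, $H$ must simultaneously satisfy $H^2=0$, $HG=0$ and $GF - \mathrm{id} = dH+Hd$ globally, which is a nontrivial extension problem, not ``standard linear algebra''. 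Note that if this step did go through for an \emph{arbitrary} defining system, your argument would establish the Lie analogue of part (3) of \cref{prop:Massey}, which the text explicitly flags as unavailable --- a strong indication that this step cannot work in full generality.

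The comparison step also needs more than ``inspection''. For an $\infty$-isomorphism $\varphi$ with $\varphi_1 = \mathrm{id}$ between two transferred structures $l$ and $l'$, the weight-$n$ component of $\varphi \circ b = b' \circ \varphi$ contains terms of the form $\varphi_{n-k+1}\bigl(l_k(\dots),\dots\bigr)$, which lie in the image of $\varphi_{n-k+1}$ rather than in $\sum_{k<n}\im(l_k)$; eliminating them requires the inductive input that all proper sub-products $\langle \alpha_{i_1},\dots,\alpha_{i_k}\rangle$ are defined and contain $0$, and this is where the real content of part (1) sits. Similarly, in part (2) the claim that the tailored and given contractions ``agree in all weights below $n$'' is false as stated --- distinct contractions yield distinct, merely $\infty$-isomorphic, structures --- so the same comparison is needed there, with the discrepancy absorbed into the indeterminacy of the Lie--Massey product. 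Finally, the collapse of the tree expansion of $F\Sigma G$ onto $z_{\{1,\dots,n\}}$ requires checking not only signs but also that the Leibniz-triangle coefficients $a_{m,k}$ in $h_m^{\Sigma}$ recombine the individual brackets $[u_V,u_W]$ with \emph{equal} weights, since $H$ is prescribed only on the full sums $z_S$ and not on their individual summands. Each of these points is fixable, but none is automatic.
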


The topological significance of Lie--Massey products is that they are related to \emph{higher order Whitehead products}. Higher order Whitehead products are certain partially defined operations on homotopy groups that were introduced in the 60s, cf.~\cite{Hardie61,Porter65}. Allday \cite{Allday73,Allday77} showed that higher order Whitehead products in the rational homotopy groups of a simply connected space $X$ can be computed in terms of Lie--Massey brackets in the homology of a dg Lie model for $X$. Andrews--Arkowitz \cite{AndrewsArkowitz78} computed higher order Whitehead products in terms of Sullivan models. Under the dictionary in \cref{prop:L-infinity dictionary}, this implicitly gives a relation between the higher order Whitehead products and the minimal $\Linfty$-algebra structure on the rational homotopy groups. More explicit statements formulated directly in terms of $\Linfty$-algebras can be found in \cite{BBMFM17}.


  \section{The nerve of an \texorpdfstring{$\Linfty$}{L∞}-algebra}
In this appendix we will review the \emph{Maurer--Cartan space}, or \emph{nerve}, of an $\Linfty$-algebra, and related constructions. The nerve of a positively graded $\Linfty$-algebra of finite type is nothing but the spatial realization of the associated Sullivan algebra, but the advantage of the Maurer--Cartan space perspective becomes apparent when one works with unbounded $\Linfty$-algebras and models for non-connected spaces such as mapping spaces. The nerve also provides a more direct link between spaces and dg Lie algebras than Quillen's theory \cite{Quillen69}.

The nerve of a dg Lie algebra was introduced by Hinich \cite{Hinich97bis} and was further brought into the spotlight by Getzler \cite{Getzler09}. The nerve $\MC_\bullet(\lie g)$, and Getzler's small model for it, $\gamma_\bullet(\lie g)$, have played an important role in recent developments of rational homotopy theory (see e.g.~ \cite{BuijsMurillo13,Berglund15,LazarevMarkl15,BFMT20,Robert-NicoudVallette20}) and we will briefly discuss some of these in this appendix.

\begin{definition} \label{def:nilpotent}
The \emph{lower central series} of an $\Linfty$-algebra $\lie{g}$ is by definition the smallest filtration
$$\lie{g} = \Gamma^1 \lie{g} \supseteq \Gamma^2 \lie{g} \supseteq \ldots$$
that is compatible with the $\Linfty$-structure in the sense that
$$\big[\Gamma^{i_1}\lie{g},\ldots,\Gamma^{i_r}\lie{g}\big] \subseteq \Gamma^{i_1+\ldots+i_r}\lie{g}$$
for all $i_1,\ldots,i_r$.

We say that $\lie{g}$ is \emph{nilpotent} if for every $n$, there exists a $k$ such that
$$\Gamma^k\lie{g}_n = 0.$$
The \emph{completion} of $\lie{g}$ is the $\Linfty$-algebra
$$\widehat{\lie{g}} = \varprojlim \lie{g}/\Gamma^k \lie{g}$$
and we say that $\lie{g}$ is \emph{complete} if the canonical map
$$\lie{g} \longto \widehat{\lie{g}}$$
is an isomorphism.
\end{definition}

\begin{remark}
Clearly, every positively graded $\Linfty$-algebra is nilpotent and every nilpotent $\Linfty$-algebra is complete.
Moreover, a finite type $\Linfty$-algebra is complete if and only if it is nilpotent \cite[Proposition 5.2]{Berglund15}. (Note that the notion of nilpotence considered here coincides with the notion of ``degree-wise nilpotence'' of \cite[Definition 2.1]{Berglund15}.) The definition of completeness given here is slightly more restrictive than the one given in \cite{Berglund15} in that we only consider the lower central series filtration.
\end{remark}

If $\lie{g}$ is a complete $\Linfty$-algebra, then the series
$$\curvature(\tau) = \sum_{n\geq 0} \frac{1}{n!}\ell_n(\tau,\ldots,\tau)$$
converges for every $\tau\in \lie{g}_{-1}$. If
$$\curvature(\tau) = 0,$$
then $\tau$ is called a \emph{Maurer--Cartan element}.
The set of Maurer--Cartan elements in $\lie{g}$ is denoted $\MC(\lie{g})$.

\begin{definition}
If $A$ is a cdga and $\lie{g}$ is a complete $\Linfty$-algebra, the tensor product of the underlying chain complexes $A\tensor{} \lie{g}$ admits an $\Linfty$-algebra structure where
$$[a_1\tensor{} x_1,\ldots,a_n\tensor x_n] = \pm a_1\ldots a_n\tensor{} [x_1,\ldots,x_n]$$
for all $n\geq 2$ and all $a_1,\ldots,a_n\in A$, $x_1,\ldots,x_n\in \lie{g}$. The completion of this $\Linfty $-algebra is called the \emph{completed tensor product} and it is denoted
$$A\ctensor \lie{g}.$$
\end{definition}

This construction is functorial in $A$, so if we form the completed tensor product with the simplicial cdga $\Omega_\bullet$ (see \cref{def:omega}), we obtain a simplicial complete $\Linfty$-algebra $\Omega_\bullet \ctensor \lie{g}$.

\begin{definition}
The \emph{Maurer--Cartan space}, or \emph{nerve}, of a complete $\Linfty$-algebra $\lie{g}$ is the simplicial set
$$\MC_{\bullet}(\lie{g}) = \MC{}(\Omega_\bullet \ctensor \lie{g}).$$
\end{definition}

The vertices of the simplicial set $\MC_\bullet(\lie{g})$ are the Maurer--Cartan elements of $\lie{g}$ and the connected components are the so-called gauge equivalence classes of Maurer--Cartan elements,
$$\pi_0(\MC_\bullet(\lie{g})) = \MC(\lie{g})/{\sim}.$$
The higher homotopy groups can be computed in terms of the homology of twists of $\lie{g}$ by Maurer--Cartan elements. For a Maurer--Cartan element $\tau\in\MC(\lie{g})$, the twisted $\Linfty$-algebra $\lie{g}^\tau$ is the complete $\Linfty$-algebra with the same underlying graded vector space as $\lie{g}$ and structure maps $\ell_n^\tau$ given by
$$\ell_n^\tau(x_1,\ldots,x_n) = \sum_{k\geq 0} \frac{1}{k!} \ell_{k+n}(\tau,\ldots,\tau,x_1,\ldots,x_n).$$
The following is \cite[Theorem~1.1]{Berglund15}.

\begin{theorem} \label{thm:homotopy groups of nerve}
For every Maurer--Cartan element $\tau$ in $\lie{g}$ and every $k\geq 0$, there is an isomorphism
$$\pi_{k+1}(\MC_\bullet(\lie{g}),\tau) \cong \Ho{k}(\lie{g}^\tau).$$
For $k>0$, this is an isomorphism of abelian groups.
For $k=0$, this is an isomorphism of groups where the complete Lie algebra $\Ho{0}(\lie{g}^\tau)$ is given a group structure via the Baker--Campbell--Hausdorff formula.
\end{theorem}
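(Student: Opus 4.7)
First I would reduce to the case $\tau = 0$. The map $\sigma \mapsto \tau + \sigma$ defines an isomorphism of simplicial sets $\MC_\bullet(\lie{g}^\tau) \longto \MC_\bullet(\lie{g})$ sending the basepoint $0$ to $\tau$, because the Maurer--Cartan equation for $\tau + \sigma$ in $\lie{g}$ expands, by Taylor expansion of the structure operations around $\tau$, into the Maurer--Cartan equation for $\sigma$ in $\lie{g}^\tau$; this is applied levelwise over $\Omega_\bullet$. It therefore suffices to prove $\pi_{k+1}(\MC_\bullet(\lie{g}),0) \iso \Ho{k}(\lie{g})$ for an arbitrary complete $\Linfty$-algebra.

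\textbf{Reduction to the nilpotent case.} Completeness gives $\lie{g} \iso \varprojlim_n \lie{g}/\Gamma^n\lie{g}$, and $\MC_\bullet$ preserves this inverse limit. The key technical input is that each projection $\lie{g}/\Gamma^{n+1}\lie{g} \twoheadrightarrow \lie{g}/\Gamma^n\lie{g}$, whose kernel is the abelian $\Linfty$-algebra $\Gamma^n\lie{g}/\Gamma^{n+1}\lie{g}$, induces a Kan fibration between nerves. Granted this, a Milnor $\varprojlim^1$ argument --- with Mittag--Leffler guaranteed degreewise by the nilpotence of each quotient --- reduces the problem to nilpotent $\lie{g}$. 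For such $\lie{g}$, I would induct on the nilpotence length, the inductive step being the five lemma applied to the long exact sequence of homotopy groups of the fibration sequence
\[ \MC_\bullet(\Gamma^n\lie{g}/\Gamma^{n+1}\lie{g}) \longto \MC_\bullet(\lie{g}/\Gamma^{n+1}\lie{g}) \longto \MC_\bullet(\lie{g}/\Gamma^n\lie{g}) \]
versus the long exact homology sequence of the underlying central extension.

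\textbf{Base case and BCH structure.} The base case is abelian $\lie{g}$, where all brackets vanish and $\MC_\bullet(\lie{g})$ reduces to the simplicial abelian group of degree $-1$ cycles in $\Omega_\bullet \tensor \lie{g}$; Dold--Kan combined with the polynomial de Rham theorem (\cref{thm:polyDeRham}), applied to the shifted chain complex representing $\lie{g}$, yields the isomorphism $\pi_{k+1}(\MC_\bullet(\lie{g}),0) \iso \Ho{k}(\lie{g})$. For $k = 0$, the group structure on $\pi_1$ is concatenation of $1$-simplices in the Kan complex $\MC_\bullet(\lie{g})$; unwinding the Maurer--Cartan equation on $\Omega_1 \ctensor \lie{g}$ for an explicit Kan filler identifies this composition with the Baker--Campbell--Hausdorff product, as in the classical dg Lie setting. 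The main obstacle is the Kan-fibrancy statement above, which amounts to extending a Maurer--Cartan element from a horn to the full simplex through an abelian kernel; the natural tool is the basic perturbation lemma applied to an explicit contraction of $\Omega_\bullet$ onto its horn subcomplex, in the spirit of the Dupont construction of \cref{remark:dupont}, mirroring the approach to the homotopy transfer theorem in the main text.
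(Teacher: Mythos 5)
The paper does not actually prove this statement; it quotes it directly from \cite[Theorem 1.1]{Berglund15}, so your attempt has to be measured against the proof given there. Its architecture --- twist to reduce to $\tau=0$, pass to the nilpotent quotients by completeness, induct along the lower central series using the fibration attached to a central extension, and settle the abelian case via Dold--Kan and the polynomial de Rham theorem --- is exactly what you have reconstructed, and the reduction to $\tau=0$ via $\sigma\mapsto\tau+\sigma$ is correct as stated. However, two steps have genuine gaps. First, both your five-lemma step and your $\varprojlim^1$ step presuppose a \emph{map} between the two exact sequences being compared, and you never construct one; abstract isomorphisms of the outer terms do not determine the middle term, and the inductive step needs the isomorphisms at successive stages to be compatible. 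The missing ingredient is a natural comparison map $\pi_{k+1}(\MC_\bullet(\lie{g}),0)\to \Ho{k}(\lie{g})$ defined for all complete $\Linfty$-algebras at once --- concretely, the period map sending the class of a Maurer--Cartan element $\sigma\in\Omega_{k+1}\ctensor\lie{g}$ vanishing on $\partial\Delta^{k+1}$ to the integral over $\Delta^{k+1}$ of its top form-degree component --- together with the verification that it is a homomorphism, is natural, and intertwines the connecting maps of the two long exact sequences. This map is the actual content that makes your five-lemma and limit skeleton function.

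Second, your Mittag--Leffler justification is wrong: nilpotence of each quotient $\lie{g}/\Gamma^n\lie{g}$ says nothing about the tower $\{\pi_{k+2}(\MC_\bullet(\lie{g}/\Gamma^n\lie{g}))\}_n\cong\{\Ho{k+1}(\lie{g}/\Gamma^n\lie{g})\}_n$ being Mittag--Leffler, and for a general complete $\lie{g}$ it need not be. The repair is not to kill the $\varprojlim^1$ term but to match it: compare the Milnor sequence for $\MC_\bullet(\lie{g})=\varprojlim_n\MC_\bullet(\lie{g}/\Gamma^n\lie{g})$ with the $\varprojlim$--$\varprojlim^1$ sequence computing $\Ho{k}$ of the degreewise surjective tower of chain complexes $\lie{g}/\Gamma^n\lie{g}$, both of which carry a possibly nonvanishing $\varprojlim^1$ term, and let the period map identify the two short exact sequences. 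Two smaller caveats: at the bottom of the long exact sequence $\pi_1$ is nonabelian and $\pi_0$ is only a pointed set, so the five lemma must be applied in its nonabelian form; and the Baker--Campbell--Hausdorff identification on $\pi_1$ cannot be read off from the abelian base case (where BCH degenerates to addition), so your proposed direct computation with Kan fillers in $\Omega_1\ctensor\lie{g}$ is really a separate, and nontrivial, argument that needs to be carried out.
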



Under the correspondence in \cref{prop:L-infinity dictionary}, the Maurer--Cartan space corresponds to Sullivan's simplicial realization.
The following is \cite[Proposition 6.1]{Berglund15}.

\begin{proposition} \label{prop:mc vs spatial}
Let $L$ be a non-negatively graded nilpotent $\Linfty$-algebra of finite type and let
$$\coCE{*}(L) = (\Lambda V,d)$$
be the corresponding Sullivan algebra.
The Maurer--Cartan space of $L$ is isomorphic to the simplicial realization of $(\Lambda V,d)$,
$$\MC_\bullet(L) \cong \langle \Lambda V,d \rangle.$$
\end{proposition}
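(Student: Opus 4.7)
My plan is to construct the isomorphism of simplicial sets $\MC_\bullet(L) \cong \langle \Lambda V, d \rangle$ by producing a natural bijection on each simplicial level; naturality in $\Omega_\bullet$ then automatically upgrades the levelwise bijections to a simplicial isomorphism. Fix $n$ and compare $\MC(\Omega_n \hat\otimes L)$ on one side with $\Hom_{\CDGA}((\Lambda V, d), \Omega_n)$ on the other.

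First I identify the underlying sets of linear maps, ignoring differentials. A cdga morphism $\phi \colon \Lambda V \to \Omega_n$ is freely determined by its restriction $\phi_V \colon V \to \Omega_n$, a degree-zero linear map. Since $V = \dual{\shift L}$ has finite type, with $V^i = \dual{L_{i-1}}$ by \cref{prop:L-infinity dictionary}, such a restriction corresponds via the evaluation pairing to an element
$$\tau_\phi \in \bigoplus_i L_{i-1} \otimes \Omega_n^i = (\Omega_n \hat\otimes L)_{-1},$$
yielding a bijection between degree-zero linear maps $V \to \Omega_n$ and homological degree $-1$ elements of $\Omega_n \hat\otimes L$. (The completion is vacuous since $L$ is degreewise finite-dimensional.)

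The core step is to show that $\phi$ commutes with the differentials if and only if $\tau_\phi$ satisfies the Maurer--Cartan equation. The Chevalley--Eilenberg differential decomposes on generators as $d = \sum_{k \geq 1} d_k$ with $d_k \colon V \to \Lambda^k V$, and by \cref{lemma:Linfty_equiv_def} the component $d_k$ is dual under the evaluation pairing to the operation $\tfrac{1}{k!} \ell_k \colon L^{\otimes k} \to L$, with the factorial arising from the presentation $\Lambda^k V = (V^{\otimes k})_{\Sigma_k}$. Applying $\phi$, using that $\phi$ is multiplicative on $\Lambda V$, and pairing the identity $\phi(dv) = d_{\Omega_n}(\phi(v))$ against all $v \in V$ then converts it into
$$d_{\Omega_n}(\tau_\phi) + \sum_{k \geq 1} \frac{1}{k!} \ell_k(\tau_\phi, \ldots, \tau_\phi) = 0,$$
which is $\curvature(\tau_\phi) = 0$ once the internal differential of $L$ is absorbed into $\ell_1$. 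Since the bijection $\phi \mapsto \tau_\phi$ is manifestly natural in $\Omega_n$, it is compatible with the cosimplicial structure of $\Omega_\bullet$ and assembles into the desired simplicial isomorphism.

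The main obstacle is the careful bookkeeping of signs, suspensions, and the $1/k!$ factorials when translating between the coderivation description of $\coCE{*}(L) = (\Lambda V, d)$ from \cref{lemma:Linfty_equiv_def} and the $\Linfty$-structure on $\Omega_n \hat\otimes L$. This is cleanest to handle by identifying the cofree cocommutative coalgebra $\coSA(\shift L)$ with the continuous linear dual of $\widehat{\SA}\, V$ and checking that the evaluation pairing intertwines coderivations on one side with derivations on the other, weight by weight.
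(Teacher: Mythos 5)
Your argument is correct and is essentially the proof of the result that the paper simply cites for this statement, namely \cite[Proposition 6.1]{Berglund15}: identify degree-zero linear maps $V \to \Omega_n$ with $(\Omega_n \otimes L)_{-1}$ via finite-type duality, observe that compatibility with the differentials need only be checked on generators and is there exactly the Maurer--Cartan equation, and use naturality in $\Omega_n$ to assemble the levelwise bijections into a simplicial isomorphism. One small correction: the identification $\Omega_n \ctensor L = \Omega_n \otimes L$ does not follow from $L$ being degreewise finite-dimensional (finite type alone does not imply completeness), but rather from the hypothesis that $L$ is nilpotent together with the fact that $\Omega_n$ is concentrated in cohomological degrees $0, \dots, n$, which makes $\Omega_n \otimes L$ degreewise nilpotent and hence complete.
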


\begin{remark} \label{remark:homotopy groups}
The graded vector space $V$ is dual to $sL$, so \cref{prop:mc vs spatial} together with Theorem \ref{thm:homotopy groups of nerve} can be used to prove the fact (Proposition \ref{prop:homotopy groups from sullivan model}) that the rational homotopy groups of a simply connected space are dual to the indecomposables of the minimal Sullivan model,
$$\pi_k \big( \langle V, d \rangle \big) \cong \Hom(V^k,\QQ)$$
for all $k$.
\end{remark}

Let us give an example of an application of Maurer--Cartan spaces to rational homotopy theory. They are very useful for studying mapping spaces. The following is \cite[Theorem 1.4]{Berglund15}.

\begin{theorem}
If $A$ is a cdga model for a connected space $X$ and $L$ is a complete $\Linfty$-model for a $\QQ$-local nilpotent space $Y$, then $A\ctensor L$ is a model for the space of maps from $X$ to $Y$ in the sense that
$$\map(X,Y) \eq \MC_\bullet(A\ctensor L).$$
In particular, there is a bijection
\begin{equation} \label{eq:mapping space bijection}
[X,Y] \cong \MC(A\ctensor L)/{\sim}
\end{equation}
and for all $k\geq 0$ there is an isomorphism
$$\htpygrp{k+1}\big(\map(X,Y),f\big) \cong \Ho{k}\big( (A\ctensor L)^\tau \big),$$
whenever the homotopy class of the map $f\colon X\to Y$ corresponds to the gauge equivalence class of the Maurer--Cartan element $\tau\in \MC(A\ctensor L)$ under the bijection \eqref{eq:mapping space bijection}.
\end{theorem}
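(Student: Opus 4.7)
The plan is to reduce the statement to a computation of the simplicial mapping space into the Maurer--Cartan nerve $\MC_\bullet(L)$, and then transport along quasi-isomorphisms. Since $L$ is a complete $\Linfty$-model for the $\QQ$-local nilpotent space $Y$, there is a natural weak equivalence $Y \eq \MC_\bullet(L)$; in the finite-type nilpotent case this is \cref{prop:mc vs spatial} combined with the fact that Sullivan realization of the minimal model computes the rationalization, and in general one appeals to the standard homotopy theory of the Maurer--Cartan nerve. It therefore suffices to produce a natural weak equivalence $\map(X, \MC_\bullet(L)) \eq \MC_\bullet(A \ctensor L)$.

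For the key computation I would unfold the definition of the simplicial mapping space: an $n$-simplex of $\map(X, \MC_\bullet(L))$ is a simplicial map $X \times \Simplex{n} \to \MC_\bullet(L) = \MC(\Omega_\bullet \ctensor L)$. Unwinding the definition of $\PDR$ as a colimit and passing through the adjunction \eqref{eq:pdr_real_adjunction}, extended from cdgas to complete $\Linfty$-algebras by tensoring with $L$, identifies such a map with a Maurer--Cartan element in $\PDR(X \times \Simplex{n}) \ctensor L$. The Künneth-type quasi-isomorphism $\PDR(X \times \Simplex{n}) \eq \PDR(X) \otimes \PDRforms{n}$ (cf.\ \cite[\S14]{FelixHalperinThomas01}) then rewrites this as an $n$-simplex of $\MC_\bullet(\PDR(X) \ctensor L)$. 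Assembling these identifications naturally in $[n] \in \Delta$ yields $\map(X, \MC_\bullet(L)) \eq \MC_\bullet(\PDR(X) \ctensor L)$.

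The final step is to replace $\PDR(X)$ by the arbitrary cdga model $A$. A zig-zag of cdga quasi-isomorphisms connecting $A$ to $\PDR(X)$ yields, after applying $(-) \ctensor L$, a zig-zag of $\Linfty$-quasi-isomorphisms of complete $\Linfty$-algebras. Invoking the homotopy invariance of the Maurer--Cartan nerve under such quasi-isomorphisms (an $\Linfty$-version of the Goldman--Millson theorem; cf.\ \cite{Getzler09} and \cite[\S 2]{Berglund15}) produces the desired weak equivalence $\MC_\bullet(A \ctensor L) \eq \MC_\bullet(\PDR(X) \ctensor L)$, finishing the proof of $\map(X,Y) \eq \MC_\bullet(A \ctensor L)$. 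The bijection $[X,Y] \iso \MC(A \ctensor L)/{\sim}$ is then the induced bijection on $\pi_0$, and the isomorphism on higher homotopy groups at a basepoint $f \leftrightarrow \tau$ is immediate from \cref{thm:homotopy groups of nerve} applied to the complete $\Linfty$-algebra $A \ctensor L$ at the Maurer--Cartan element $\tau$.

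The principal obstacle I anticipate is the homotopy invariance used in the last step: one must show that if $A \eq A'$ is a quasi-isomorphism of cdgas, then the induced map $\MC_\bullet(A \ctensor L) \to \MC_\bullet(A' \ctensor L)$ is a weak equivalence for every complete $\Linfty$-algebra $L$. Quasi-isomorphisms of $\Linfty$-algebras do not induce weak equivalences on Maurer--Cartan nerves in general, so one has to exploit the tower $A \ctensor (L / \Gamma^k L)$ coming from the lower central series of $L$: on each associated graded piece the map reduces to a cdga quasi-isomorphism tensored with a fixed chain complex, where homotopy invariance is clear, and one then propagates the equivalence up the tower using that Maurer--Cartan nerves of nilpotent $\Linfty$-algebras convert such towers of surjections into Kan fibrations whose limit computes $\MC_\bullet(A \ctensor L)$.
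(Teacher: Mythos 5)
The paper itself does not prove this theorem --- it quotes it from \cite[Theorem 1.4]{Berglund15} --- so your proposal has to be measured against the argument given there. Your overall architecture is the right one and matches that argument in outline: identify $\map(X,\MC_\bullet(L))$ with $\MC_\bullet(\PDR(X)\ctensor L)$, transport to an arbitrary cdga model $A$ using homotopy invariance of the nerve in the cdga variable (proved by induction up the tower $A\ctensor(L/\Gamma^k L)$), and read off the $\pi_0$ and $\pi_{k+1}$ statements from \cref{thm:homotopy groups of nerve}. The final filtration argument you sketch is exactly the standard one.

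The genuine gap is in your ``key computation''. An $n$-simplex of $\map(X,\MC_\bullet(L))$ is an element of $\varprojlim_\sigma \MC(\PDRforms{m}\ctensor L)=\MC\bigl(\varprojlim_\sigma(\PDRforms{m}\ctensor L)\bigr)$, the limit running over the $m$-simplices $\sigma$ of $X\times\Simplex{n}$; since the completed tensor product does not commute with the limit defining $\PDR(X\times\Simplex{n})$, this is not literally a Maurer--Cartan element of $\PDR(X\times\Simplex{n})\ctensor L$. Moreover, the K\"unneth map $\PDR(X)\otimes\PDRforms{n}\to\PDR(X\times\Simplex{n})$ is only a quasi-isomorphism, and it points the wrong way for the rewriting you describe: what these maps actually produce is a natural simplicial comparison map $\MC_\bullet(\PDR(X)\ctensor L)\to\map(X,\MC_\bullet(L))$, and proving that \emph{this} is a weak equivalence is of the same order of difficulty as the homotopy-invariance statement you postpone to your last paragraph. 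So the middle step cannot be treated as a formal identification. Two standard repairs: either run the same $\Gamma^k L$-tower argument for this comparison map as well (both sides turn the tower into towers of Kan fibrations, and on associated graded pieces the map reduces to a K\"unneth/Dold--Kan statement for chain complexes); or, as in the cited proof, sidestep the exponential law by using that $\MC(B\ctensor L)\cong\Hom[\CDGA](\coCE{*}(L),B)$ for nilpotent $L$ of finite type, so that $\MC_\bullet(A\ctensor L)\cong\Hom[\CDGA]\bigl((\Lambda V,d),A\otimes\PDRforms{\bullet}\bigr)$ on the nose, and then invoke the Bousfield--Gugenheim/Haefliger theorem that the latter simplicial set models $\map(X,Y)$. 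With either repair your proof closes up; note also that both routes quietly use the finite-type hypothesis on $Y$ that is implicit in the statement.
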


\begin{remark}
While it is in principle possible to write down models for mapping spaces in the classical language of Sullivan algebras---in fact this has been done by Brown--Szczarba \cite{BrownSzczarba97})---the model formulated using the language of $\Linfty$-algebras is arguably more perspicuous.
\end{remark}

We end this appendix with a brief review of Getzler's functor $\gamma_\bullet(\lie{g})$, introduced in \cite{Getzler09}, and some recent alternative constructions of it due to Buijs--F\'elix--Murillo--Tanr\'e \cite{BFMT20} and Robert-Nicoud--Vallette \cite{Robert-NicoudVallette20}.

Using Dupont's contraction \eqref{eq:dupont} specialized to the $n$-simplex $X= \Delta^n$, Getzler defines
$$\gamma_n(\lie{g}) = \MC_n(\lie{g}) \cap \ker (s \tensor 1)$$
and shows that this defines a simplicial subset $\gamma_\bullet(\lie g)$ of $\MC_\bullet(\lie g)$ that has a number of remarkable properties, including:
\begin{itemize}
\item $\gamma_\bullet(\lie{g})$ is a Kan complex and the inclusion $\gamma_\bullet(\lie{g}) \to \MC_\bullet(\lie{g})$ is a homotopy equivalence.

\item For $\lie{g}$ abelian, $\gamma_\bullet(\lie{g})$ is isomorphic to the simplicial vector space associated to the chain complex $(\shift \lie{g})\langle 0 \rangle$ under the Dold--Kan correspondence.

\item For a nilpotent Lie algebra $\lie g$ concentrated in degree zero, $\gamma_\bullet(\lie{g})$ is isomorphic to the nerve of the nilpotent group $\exp( \lie{g})$ associated to $\lie g$, i.e., the group with underlying set $\lie{g}$ and multiplication given by the Baker--Campbell--Hausdorff formula.

\end{itemize}




Recently, alternative constructions of Getzler's functor have been found. This is a nice application of the technology discussed in the first lecture, so let us sketch the main ideas. Buijs--F\'elix--Murillo--Tanr\'e observe that one can use the natural $C_\infty$-algebra structure on $\Cochains{*}(X;\QQ)$ (see \cref{remark:dupont}) to define a cosimplicial complete dg Lie algebra
$$\bfmt^\bullet =\coHarr{*}\big(\Cochains{*}(\Delta^\bullet;\QQ)\big).$$
Buijs--F\'elix--Murillo--Tanr\'e \cite{BFMT20} and Robert-Nicoud \cite{Robert-Nicoud19} proved that this cosimplicial object represents Getzler's functor.
\begin{theorem}[]
There is a natural isomorphism
$$\gamma_\bullet(\lie{g}) \cong \Hom[\cDGL](\bfmt^\bullet,\lie{g})$$
for complete dg Lie algebras $\lie{g}$.
\end{theorem}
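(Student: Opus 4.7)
The plan is to factor the claimed bijection as
\[ \gamma_n(\lie g) \;\overset{(i)}{\longleftrightarrow}\; \MC\big(C^*(\Delta^n;\QQ) \ctensor \lie g\big) \;\overset{(ii)}{\longleftrightarrow}\; \Hom[\cDGL](\bfmt^n,\lie g), \]
where in the middle term $C^*(\Delta^n;\QQ)$ carries the $C_\infty$-structure transferred from $\Omega_n$ via Dupont's contraction and the tensor product is the corresponding induced $\Linfty$-algebra. Since Dupont's contraction is natural in simplicial maps and the homotopy transfer formulas are manifestly functorial, both bijections will be natural in $[n]$, upgrading the pointwise bijections to an isomorphism of simplicial sets.

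For (ii), I would first establish the following Koszul-dual analog of \cref{thm:homotopy groups of nerve}: for any finite-type augmented $C_\infty$-algebra $A$ and complete dg Lie algebra $\lie g$, there is a natural bijection
\[ \MC(A\ctensor\lie g) \;\iso\; \Hom[\cDGL]\big(\coHarr{*}(A),\lie g\big). \]
The bijection sends an element $\tau\in\bar A\ctensor\lie g$ of degree $-1$ to the unique Lie algebra extension of its adjoint $\tau^\sharp\colon\shift^{-1}\dual{\bar A}\to\lie g$ to the free Lie algebra underlying $\coHarr{*}(A)$; unwinding the Harrison differential as the dual of the $C_\infty$-structure maps $m_n$, one checks directly that this extension commutes with differentials precisely when $\tau$ satisfies the $C_\infty$-Maurer--Cartan equation. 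This is the standard Koszul-duality identification of Maurer--Cartan elements with twisting morphisms, specialised to the pair $(\Com,\Lie)$.

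For (i), I would exploit the homotopy transfer theorem applied to Dupont's contraction \eqref{eq:dupont}. By construction, the transferred $C_\infty$-structure on $C^*(\Delta^n;\QQ)$ is exactly the one defining $\bfmt^n$, and the HTT produces an $\infty$-quasi-iso\-mor\-phism $E_\infty\colon C^*(\Delta^n;\QQ)\inftyto\Omega_n$ extending Dupont's section $E$. Pushing forward along $E_\infty$ yields a natural map
\[ \Phi\colon\MC\big(C^*(\Delta^n;\QQ)\ctensor\lie g\big)\longto\MC(\Omega_n\ctensor\lie g),\qquad \sigma\longmapsto\sum_{k\ge1}\tfrac{1}{k!}(E_\infty)_k(\sigma,\ldots,\sigma). \]
The explicit HTT formula $G'=(1+H\Sigma)G$ shows that, for $k\ge2$, every summand of $(E_\infty)_k$ terminates with an application of the extended homotopy $H$, whose restriction to weight $1$ coincides with $s$; combined with $sE=0$ and $s^2=0$, this gives $s\circ(E_\infty)_k=0$ for all $k\ge 1$, hence $(s\otimes 1)\Phi(\sigma)=0$, so $\Phi$ lands in $\gamma_n(\lie g)$. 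An inverse will be obtained from the companion $\infty$-quasi-iso $I_\infty\colon\Omega_n\inftyto C^*(\Delta^n;\QQ)$, using that the identity $I_\infty\circ E_\infty=\id$ survives the perturbation.

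The main obstacle is proving that $\Phi$ is genuinely surjective onto $\gamma_n(\lie g)$, not merely that it lands inside. Injectivity is immediate from $I_\infty\circ E_\infty=\id$, but surjectivity requires unwinding the dual relation $E_\infty\circ I_\infty=\id+\partial H'$ and showing that the side condition $(s\otimes 1)\tau=0$ forces the homotopical correction $\partial H'$ to act trivially on $\tau$, modulo the Maurer--Cartan equation. A cleaner alternative, should this inductive weight-by-weight bookkeeping become unwieldy, is to reformulate both sides as morphisms of cocommutative dg coalgebras into $\CE_*(\lie g)$ via the bar construction and match them by transferring the coalgebra structure along Dupont's contraction; the desired isomorphism then reduces to an instance of the HTT on the coalgebra side.
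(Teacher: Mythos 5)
The paper does not actually prove this statement --- it is quoted from Buijs--F\'elix--Murillo--Tanr\'e and Robert-Nicoud --- so there is no in-text argument to compare against; your two-step factorization through $\MC\big(\Cochains{*}(\Delta^n;\QQ)\ctensor\lie g\big)$ is precisely the strategy of those sources. Two of your three ingredients are sound. Step (ii) is the standard Koszul-duality identification of Maurer--Cartan elements of the convolution-type $\Linfty$-algebra $A\ctensor\lie g$ with twisting morphisms, hence with morphisms in $\cDGL$ out of $\coHarr{*}(A)$; since $\Cochains{*}(\Delta^n;\QQ)$ is finite dimensional and $\lie g$ is complete, the dualization and convergence issues are harmless. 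Your weight argument for step (i) is also correct: $H$ preserves weight and restricts to $h=s$ in weight one while $\Sigma$ strictly decreases weight, so every higher component $(E_\infty)_k$, $k\ge 2$, factors through $s$, whence $(s\otimes 1)\Phi(\sigma)=0$ by $s^2=0$ and $sE=0$; injectivity of $\Phi$ then follows from $I_\infty\circ E_\infty=\id$ together with functoriality of pushforward of Maurer--Cartan elements.

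The genuine gap is exactly where you flag it, and it is not a loose end but the entire content of the theorem: surjectivity of $\Phi$ onto $\gamma_n(\lie g)=\MC(\Omega_n\ctensor\lie g)\cap\ker(s\otimes 1)$. This does not follow formally from $E_\infty\circ I_\infty=\id+\partial H'$: pushing a Maurer--Cartan element forward along an $\infty$-morphism is a nonlinear operation, and ``$\partial H'$ acts trivially on $\tau$ modulo the Maurer--Cartan equation'' is not a statement one can extract by inspecting the perturbation-lemma identities. What is needed is the lemma (essentially due to Getzler, in the general form due to Bandiera, and the technical heart of the Robert-Nicoud--Vallette approach) that for a contraction satisfying the side conditions, pushforward along the transferred $\infty$-morphism is a \emph{bijection} from $\MC$ of the small model onto $\MC$ of the big one intersected with $\ker h$. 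Its proof is a fixed-point/recursion argument over the complete filtration --- given $\tau$ with $(s\otimes 1)\tau=0$, one solves for a preimage order by order, using that the curvature corrections strictly raise filtration degree --- not a manipulation of $\id+\partial H'$. Your fallback via coalgebra maps into the bar construction relocates rather than removes this step: one still has to show that a coalgebra map corresponding to an element of $\ker(s\otimes 1)$ lifts uniquely through the transferred structure, which is the same recursion. Until that lemma is stated and proved (or properly cited), the proposal establishes only that $\Phi$ is an injection of $\Hom[\cDGL](\bfmt^n,\lie g)$ into $\gamma_n(\lie g)$.
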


In particular, this means that the functor $\gamma_\bullet$ admits a well-behaved left adjoint when restricted to complete dg Lie algebras; the cosimplicial complete dg Lie algebra $\bfmt^\bullet$ gives rise to an adjunction
\begin{equation} \label{eq:bfmt adjunction}
\begin{tikzcd}
  \sSet \rar[bend left]{\bfmt(\blank)}[swap, name = U]{} & \cDGL \lar[bend left]{\Realization{\blank}}[swap, name = D]{}
  \ar[from = U, to = D, phantom]{}{\vertdashv}
\end{tikzcd}
\end{equation}
where
\begin{align*}
  \bfmt(X) &\defeq \colim{\Simplex {n} \to X} \bfmt^n \\
  \Realization{\lie{g}} &\defeq \Hom[\cDGL](\bfmt^\bullet,\lie{g}).
\end{align*}
The homotopy theoretical properties of this adjunction are worked out in the recent book \cite{BFMT20}.

Next, one might wonder what happens if we do not restrict $\gamma_\bullet$ to complete dg Lie algebras. Is the functor $\gamma_\bullet$ representable by a cosimplicial complete $\Linfty$-algebra?
Recently, Robert-Nicoud--Vallette \cite{Robert-NicoudVallette20} gave a positive answer to this question. The key is to work with the canonical resolution of the commutative operad,
\begin{equation} \label{eq:canonical}
\Cobar \Bar \Com \xlongto{\eq} \Com,
\end{equation}
instead of the minimal resolution
$$\Com_\infty = \Cobar \dual{(\opshift \Lie)} \xlongto{\eq} \Com.$$
Note that the cooperad $\Bar \Com$ may be identified with the dual of the shifted $\Linfty$-operad $\opshift \Lie_\infty$. This means in particular that one can associate a complete $\Linfty$-algebra,
$$\coHarrinfty{*}(A),$$
to every $\Cobar \Bar \Com$-algebra $A$. This could be viewed as an $\Linfty$-version of Harrison cochains. Viewing $\PDR(X)$ as a $\Cobar \Bar \Com$-algebra by pullback along \eqref{eq:canonical}, an application of the homotopy transfer theorem to Dupont's contraction \eqref{eq:dupont} shows that there is a natural $\Cobar \Bar \Com$-algebra structure on $\Cochains{*}(X;\QQ)$ for every simplicial set $X$. In particular, there is a cosimplicial complete $\Linfty$-algebra
$$\rnv^\bullet = \coHarrinfty{*}\big(\Cochains{*}(\Delta^\bullet;\QQ)\big).$$
Robert-Nicoud--Vallette \cite{Robert-NicoudVallette20} then prove the following.
\begin{theorem}
There is a natural isomorphism
$$\gamma_\bullet(\lie{g}) \cong \Hom[\cLinfty](\rnv^\bullet,\lie{g}),$$
for complete $\Linfty$-algebras $\lie{g}$.
\end{theorem}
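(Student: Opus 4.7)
The plan is to mirror the proof of the preceding theorem (for $\bfmt^\bullet$), replacing the Koszul resolution $\Com_\infty \to \Com$ by the canonical bar--cobar resolution $\Cobar \Bar \Com \to \Com$ throughout. The central input I would establish first is an adjunction-style identification
$$\Hom[\cLinfty]\big(\coHarrinfty{*}(A),\, \lie g\big) \;\cong\; \MC\big(A \ctensor \lie g\big)$$
for any $\Cobar \Bar \Com$-algebra $A$ and any complete $\Linfty$-algebra $\lie g$. Here the right-hand side is the Maurer--Cartan set of a natural convolution $\Linfty$-structure on $A \ctensor \lie g$, built from the identification $\Bar \Com \cong \dual{(\opshift \Lie_\infty)}$ together with the $\Cobar \Bar \Com$-algebra structure on $A$ and the $\Linfty$-algebra structure on $\lie g$. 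This bijection is a formal instance of operadic bar--cobar duality: unwinding $\cLinfty$-morphisms $\coHarrinfty{*}(A) \to \lie g$ via \cref{lemma:Pinfty_equiv_def} applied to the cooperad $\Bar \Com$, both sides correspond to the same set of degree $-1$ coderivations on $\Schur {\Bar \Com} A$ that square to zero. When $A$ happens to be a strict cdga (viewed as a $\Cobar \Bar \Com$-algebra by restriction along the resolution), this convolution structure reduces to the usual one on $A \ctensor \lie g$, recovering the classical description of $\MC_n(\lie g) = \MC(\Omega_n \ctensor \lie g)$.

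Next I would specialize the adjunction to $A = C^*(\Delta^n; \QQ)$, equipped with the $\Cobar \Bar \Com$-algebra structure transferred from the cdga $\Omega_n$ via Dupont's contraction (cf.\ \cref{remark:dupont}). This gives immediately
$$\Hom[\cLinfty]\big(\rnv^n,\, \lie g\big) \;\cong\; \MC\big(C^*(\Delta^n; \QQ) \ctensor \lie g\big),$$
so it remains to identify the right-hand side with $\gamma_n(\lie g) = \MC_n(\lie g) \cap \ker(s \otimes \id)$. The homotopy transfer theorem extends the strict chain maps $I$ and $E$ of Dupont's contraction to $\infty$-morphisms of $\Cobar \Bar \Com$-algebras between $C^*(\Delta^n;\QQ)$ and $\Omega_n$, and these in turn induce mutually inverse bijections between the relevant convolution Maurer--Cartan sets. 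The image of the $E$-induced map lies in $\ker(s \otimes \id)$ essentially because $sE = 0$ in Dupont's contraction and the HTT formulas (e.g.\ $G' = (1 + H\Sigma)G$ applied to $g = E$) propagate this vanishing to all higher components of the extended morphism; conversely, the identity $EI = \id + \partial s$ combined with the Maurer--Cartan equation allows one to reconstruct any $\tau \in \ker(s \otimes \id)$ uniquely from $(I \otimes \id)(\tau)$, providing the inverse assignment.

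The main obstacle will be the careful bookkeeping in this final identification step: verifying rigorously that the bijection induced by the extended $E$ has image \emph{exactly} $\ker(s \otimes \id)$, and that the candidate inverse respects the Maurer--Cartan equation for the transferred $\Cobar \Bar \Com$-algebra structure. This is essentially an MC perturbation-lemma argument internal to the convolution algebra. A secondary technical point is arranging the convolution $\Linfty$-structure on $A \ctensor \lie g$ carefully enough that the Maurer--Cartan equation converges for every complete (and not merely nilpotent) $\lie g$. Naturality in $[n] \in \Delta$ and in $\lie g$ will then follow formally from naturality of Dupont's contraction in simplicial degree and of HTT in its inputs.
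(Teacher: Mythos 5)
The paper does not actually prove this theorem: it is stated as a cited result of Robert-Nicoud--Vallette \cite{Robert-NicoudVallette20}, so there is no internal argument to compare against. That said, your outline reconstructs the strategy of that reference quite faithfully: (i) the representability step, identifying $\Hom[\cLinfty](\coHarrinfty{*}(A),\lie{g})$ with the Maurer--Cartan set of a convolution $\Linfty$-structure on $A \ctensor \lie{g}$, is the standard cobar/twisting-morphism adjunction applied to the quasi-free complete $\Linfty$-algebra $\coHarrinfty{*}(A)$ (using that $A = \Cochains{*}(\Delta^n;\QQ)$ is finite dimensional so that $\Hom(\dual{A},\lie{g}) \cong A \ctensor \lie{g}$); (ii) the identification of $\MC\big(\Cochains{*}(\Delta^n;\QQ)\ctensor\lie{g}\big)$ with $\gamma_n(\lie{g}) = \MC_n(\lie{g}) \cap \ker(s\otimes\id)$ via Dupont's contraction and homotopy transfer is exactly how the cited proof proceeds; and (iii) naturality does come for free from the simpliciality of Dupont's contraction.

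The one place where your sketch is too optimistic is the final identification. The assertion that the image of the extended morphism $E_\infty$ lands in $\ker(s\otimes\id)$ ``because $sE=0$ and the HTT formulas propagate this'' does not suffice as stated: the higher components $G'_n = \big((1+H\Sigma)G\big)_n$ involve the symmetrized homotopy $H$ in the outermost position, so one needs the side conditions $s^2=0$ and the compatibility of $s$ with $H$, and, more seriously, both surjectivity onto $\MC_n(\lie{g})\cap\ker(s\otimes\id)$ and injectivity require a genuine obstruction-theoretic argument along the complete filtration of $\lie{g}$ (this is Bandiera's theorem on Maurer--Cartan sets under contractions, which is the technical heart of \cite{Robert-NicoudVallette20}). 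You correctly flag this as the main obstacle, but be aware that it is a theorem in its own right rather than bookkeeping; with that input supplied, your argument closes.
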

As before, a consequence of this result is that Getzler's functor admits a left adjoint that can be described in quite concrete terms; there is an adjunction
\begin{equation} \label{eq:rnv adjunction}
\begin{tikzcd}
  \sSet \rar[bend left]{\rnv(\blank)}[swap, name = U]{} & \cLinfty \lar[bend left][below]{\Realization{\blank}_\infty}[swap, name = D]{}
  \ar[from = U, to = D, phantom]{}{\vertdashv}
\end{tikzcd}
\end{equation}
where
\begin{align*}
  \rnv(X) &\defeq \colim{\Simplex {n} \to X} \rnv^n, \\
  \Realization{\lie{g}}_\infty &\defeq \Hom[\cLinfty](\rnv^\bullet,\lie{g}).
\end{align*}
We refer to \cite{Robert-NicoudVallette20} for a detailed study of this construction. The cosimplicial objects $\bfmt^\bullet$ and $\rnv^\bullet$, and the adjunctions they give rise to, will likely play an important role in future developments of rational homotopy theory.
  \section{Rational homotopy theory for non-nilpotent spaces} \label{sec:nonnilpotent rht}
In this appendix we summarize some aspects of rational homotopy theory for spaces that are not necessarily nilpotent. We discuss Bousfield's $\Ho{*}(-;\QQ)$-localization and its relation to the Bousfield--Kan $\QQ$-completion and the Malcev completion for groups, and we discuss what Sullivan's minimal model can say about a non-nilpotent space.

Bousfield's $\Ho*(-;\QQ)$-localization \cite{Bousfield75} is a natural generalization of the rationalization of a nilpotent space (see e.g.~\cite{HiltonMislinRoitberg75} or \cite[Chapter 9]{FelixHalperinThomas01} for accounts of the latter). The $\Ho*(-;\QQ)$-localization is a functor $\HQloc$ from the homotopy category of pointed spaces to itself, equipped with a natural transformation $\eta_X \colon X\to \HQloc(X)$.
It is characterized by the following properties:
\begin{enumerate}
\item The map $\eta_X\colon X\to \HQloc(X)$ is a rational homology isomorphism.
\item Every rational homology isomorphism $f\colon X\to Y$ factors uniquely in the homotopy category as $f = r\eta_X$ for some $r\colon \HQloc(X) \to Y$.
\end{enumerate}


The Bousfield--Kan $\QQ$-completion $X\to \BKQcompl X$ \cite{BousfieldKan72} provides a concrete model for the $\Ho*(-;\QQ)$-localization in many, but not all, cases. We summarize some of its properties (see \cite{BousfieldKan72}):



\begin{itemize}
\item A map $X\to Y$ is a rational homology isomorphism if and only if the induced map $\BKQcompl X\to \BKQcompl Y$ is a weak homotopy equivalence.

\item The canonical map $X\to \BKQcompl X$ may or may not be a rational homology isomorphism. The space $X$ is called \emph{$\QQ$-good} if it is and \emph{$\QQ$-bad} if it is not.
\item If $X$ is $\QQ$-good, then the Bousfield--Kan $\QQ$-completion $\BKQcompl X$ is a model for the $\Ho{*}(-;\QQ)$-localization.

\item Nilpotent spaces, and more generally virtually nilpotent spaces, are $\QQ$-good \cite[Proposition 3.4]{DrorDwyerKan77}. In particular, simply connected spaces or spaces with finite fundamental group are $\QQ$-good.

\item If $X$ is $\QQ$-bad, then so is $\BKQcompl X$.
An example of a $\QQ$-bad space is the wedge of $n$ circles $\vee^n S^1$ for $n>1$ \cite{IvanovMikhailov19}.
\end{itemize}

The following key result links the Bousfield--Kan completion to the minimal Sullivan model.
\begin{theorem}
Consider a connected simplicial set $X$ of finite $\QQ$-type, and let $(\Lambda V,d) \to \Omega^*(X)$ be the minimal Sullivan model. The adjoint map
$$X \longto \langle \Lambda V, d \rangle$$
is weakly equivalent to the Bousfield--Kan $\QQ$-completion $X\to \BKQcompl X$.
Furthermore, there is a bijection for every $k\geq 1$
$$\htpygrp{k}(\BKQcompl X) \cong \Hom(V^k,\QQ).$$
For $k\geq 2$ this is an isomorphism of abelian groups.

\end{theorem}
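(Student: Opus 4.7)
The plan is to verify the two defining properties of the Bousfield--Kan $\QQ$-completion for the adjoint map $\eta\colon X\to \langle \Lambda V,d\rangle$: that it is a rational cohomology isomorphism and that its target is $\QQ$-complete. The homotopy group computation will then follow by passing to the language of $\Linfty$-algebras and invoking \cref{thm:homotopy groups of nerve}.

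First I would prove that $\eta$ is a rational cohomology isomorphism. The adjunction unit $(\Lambda V,d) \to \PDR(\langle \Lambda V,d\rangle)$ is a quasi-isomorphism by the cohomology of the spatial realization theorem, since $(\Lambda V,d)$ is of finite type ($X$ being of finite $\QQ$-type). Composing with the defining quasi-isomorphism $(\Lambda V,d)\xlongto{\eq} \PDR(X)$ of the minimal Sullivan model yields that $\PDR(\eta)$ is a quasi-isomorphism, hence $\Coho{*}(\eta;\QQ)$ is an isomorphism by the polynomial de Rham theorem.

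Second I would show that $\langle \Lambda V,d\rangle$ is $\QQ$-complete. The nilpotence filtration $0 = F_{-1}V \subseteq F_0 V \subseteq \dots$ expresses $(\Lambda V,d)$ as a sequential colimit of sub-cdgas $(\Lambda F_p V, d)$, each obtained from the previous by attaching cells along cocycles in $\Lambda F_{p-1}V$; passing to realizations, this exhibits $\langle \Lambda V,d\rangle$ as an inverse limit of a tower of principal fibrations whose fibers are finite type rational Eilenberg--MacLane spaces $K(\Hom(F_p V/F_{p-1}V,\QQ),*)$. Such towers are $\QQ$-complete by the fibration lemma of \cite{BousfieldKan72}. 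Combined with the first step, the universal property of the Bousfield--Kan $\QQ$-completion produces a map $\BKQcompl X \to \langle \Lambda V,d\rangle$ under $X$; since its source and target are $\QQ$-complete and the map is a rational cohomology isomorphism, it is a weak equivalence.

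For the homotopy group assertion, by \cref{prop:L-infinity dictionary} the minimal Sullivan algebra $(\Lambda V,d)$ corresponds to a minimal nilpotent $\Linfty$-algebra $L$ of finite type with $V \cong \dual{(\shift L)}$, so that $\Hom(V^k,\QQ) \cong L_{k-1}$ for each $k\geq 1$. By \cref{prop:mc vs spatial} there is an isomorphism $\langle \Lambda V,d\rangle \cong \MC_\bullet(L)$, and applying \cref{thm:homotopy groups of nerve} at the Maurer--Cartan element $0$, using that $L$ has trivial differential by minimality, gives
\[
\htpygrp{k}(\BKQcompl X) \cong \htpygrp{k}(\MC_\bullet(L),0) \cong \Ho{k-1}(L) = L_{k-1} \cong \Hom(V^k,\QQ),
\]
an isomorphism of abelian groups for $k\geq 2$ and a bijection compatible with the Baker--Campbell--Hausdorff group structure on $L_0$ for $k=1$. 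The main obstacle is the $\QQ$-completeness step: while the tower of rational Eilenberg--MacLane fibrations should intuitively be $\QQ$-complete, a careful application of the fibration lemma requires controlling the action of $\htpygrp{1}$ on the higher homotopy groups of the fibers, which is precisely what the nilpotence condition on the Sullivan algebra provides.
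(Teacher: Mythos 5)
The paper offers no argument of its own here --- it simply cites \cite[Theorems 12.2 and 12.8(iii)]{BousfieldGugenheim76} --- but your proposal contains a genuine gap. Your strategy (show $\eta\colon X\to\langle\Lambda V,d\rangle$ is a rational homology isomorphism with $\QQ$-complete target, then invoke a universal property) characterizes Bousfield's $\Ho{*}(-;\QQ)$-localization $\HQloc(X)$, not the Bousfield--Kan completion $\BKQcompl X$; the two agree only for $\QQ$-good spaces, while the theorem is stated for arbitrary connected $X$ of finite $\QQ$-type, a class containing $\QQ$-bad spaces --- the paper itself records that $S^1\vee S^1$ is $\QQ$-bad. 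For such $X$ the map $X\to\BKQcompl X$ is \emph{not} a rational homology isomorphism, so your first step would, if valid, contradict the theorem. The step fails at the parenthetical claim that $(\Lambda V,d)$ is of finite type because $X$ is: the paper asserts this only for \emph{nilpotent} $X$, and for $X=S^1\vee S^1$ the minimal model has $V^1$ infinite-dimensional (dual to the Malcev Lie algebra of the free group on two generators), so the spatial realization theorem, which requires finite type, cannot be applied to the unit $(\Lambda V,d)\to\PDR\langle\Lambda V,d\rangle$. The same finite-type hypothesis is silently used again at the end, since \cref{prop:mc vs spatial} and the duality $V\cong\dual{(\shift L)}$ both require it.

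Your argument is essentially correct when $X$ is nilpotent of finite $\QQ$-type, where the model is of finite type and $\HQloc\simeq\BKQcompl$. In the general connected case one cannot reduce to the two ``defining properties''; instead one filters $(\Lambda V,d)$ by finite-type nilpotent stages $(\Lambda F_sV,d)$ (your second step correctly identifies this tower) and compares the resulting tower of fibrations $\{\langle\Lambda F_sV,d\rangle\}$ with the Bousfield--Kan tower $\{\QQ_sX\}$ at the pro-level, identifying $\langle\Lambda V,d\rangle\simeq\operatorname{holim}_s\langle\Lambda F_sV,d\rangle$ with $\BKQcompl X=\operatorname{holim}_s\QQ_sX$ directly; the homotopy-group statement then requires a $\varprojlim^1$ analysis along this tower rather than a single application of \cref{thm:homotopy groups of nerve}. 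This is the content of the cited results of Bousfield--Gugenheim. To repair your write-up, either restrict the statement to nilpotent $X$ or replace the first step by this pro-level comparison.
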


\begin{proof}
See Theorem \cite[Theorem 12.2]{BousfieldGugenheim76} and \cite[Theorem 12.8(iii)]{BousfieldGugenheim76}.
\end{proof}


If $X$ is nilpotent, then there is a simple relation between the homotopy groups of $X$ and the homotopy groups of $\BKQcompl X$, namely
$$\htpygrp{k}(\BKQcompl X) \iso \htpygrp{k}(X)\otimes \QQ$$
for all $k$, where $\htpygrp{1}(X)\otimes \QQ$ should be interpreted as the rationalization of the nilpotent group $\htpygrp{1}(X)$.

For general $X$, the relation is not as simple, but some things can be said.
If $X$ is connected and $\Ho{1}(X;\QQ)$ is finite-dimensional, then $\htpygrp{1}(\QQ_\infty X)$ may be identified with the Malcev completion $\htpygrp{1}(X)_\QQ^\wedge$ (cf.~\cite[Corollary 7.4]{FelixHalperinThomas15}).

The Malcev completion of a group $G$ can be defined by
$$G_\QQ^\wedge = \varprojlim\, \big((G/\Gamma_n G) \otimes \QQ\big),$$
i.e.\ the inverse limit of the rationalizations of the nilpotent groups $G/\Gamma_n G$, where $\{ \Gamma_n G\}_n$ is the lower central series of $G$, defined by $\Gamma_1 G = G$ and $\Gamma_{n+1}G = [G,\Gamma_nG]$.
Alternatively, one can define $G_\QQ^\wedge$ as the grouplike elements in the complete Hopf algebra $\widehat{\QQ}[G]$, see \cite[Appendix A3]{Quillen69}.

In general, the homotopy groups of $\BKQcompl X$ can be wildly different from the rational homotopy groups of $X$. For example, if $X = \B\Gamma$ for a perfect group $\Gamma$ of finite type, then the higher homotopy groups of $X$ are trivial, whereas $\BKQcompl X$ is a model for the rationalization of the Quillen plus construction $\B\Gamma^+$, so that
$$\pi_k(\BKQcompl \B\Gamma) \cong \pi_k(\B\Gamma^+)\otimes \QQ$$
for all $k$, cf.~\cite[pp.\ 212f.]{FelixHalperinThomas01}.
\end{appendices}

\bibliographystyle{alpha}
\bibliography{bib_global_Berglund_Stoll}

\end{document}